\newtheorem{theorem}{Theorem}[section]
\newtheorem{definition}[theorem]{Definition}
\newtheorem{lemma}[theorem]{Lemma}
\newtheorem{proposition}[theorem]{Proposition}
\newtheorem{corollary}[theorem]{Corollary}
\newtheorem{remark}[theorem]{Remark}
\newtheorem{outcome}[theorem]{Outcome}
\newtheorem{examplecore}[theorem]{Example}}
\newenvironment{proofof}[1]{\vspace{.2cm}\noindent\textsc{Proof of
    #1:}}{\hspace*{\fill} $\blacksquare$\par\vspace{.1cm}}
\newenvironment{example}{\begin{examplecore}}{\hspace*{\fill}
$\square$\par\vspace{.1cm}\end{examplecore}}
\newcommand*{\FT}{\widehat{\operatorname{H}}^\bullet}
\newcommand{\Z}{{\mathbb{Z}}}
\newcommand{\F}{{\mathbb{F}}}
\newcommand{\rationals}{{\mathbb{Q}}}
\newcommand{\ringO}{\mathcal{O}}
\newcommand{\op}{\operatorname}
             \newcommand{\edgegraph}{\includegraphics[width=3.78mm, height=1.8mm]{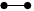}}
             \newcommand{\circlegraph}{\includegraphics[width=3.78mm, height=2.79mm]{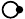}}
\begin{document}

\title[Farrell--Tate cohomology of GL$_3(\mathcal{O}_{\mathbb{Q}(\sqrt{-m})})$]{
On Farrell--Tate cohomology of \texorpdfstring{${\rm GL}_3$}{GL3} \\ over rings of quadratic integers}

\author{Bui Anh Tuan, Alexander D. Rahm and Matthias Wendt}

\date{\today}

\address{Bui Anh Tuan, Faculty of Mathematics and Computer Science, University of Science, VNU-HCM, 227 Nguyen Van Cu Str., Dist. 5, Ho Chi Minh City, Vietnam}
\email{batuan@hcmus.edu.vn}

\address{Alexander D. Rahm, Laboratoire de math\'ematiques GAATI,
Universit\'e de la Polyn\'esie fran\c{c}aise,
BP 6570,
98702 Faaa,
French Polynesia}
\email{Alexander.Rahm@upf.pf}

\address{Matthias Wendt, Fachgruppe Mathematik/Informatik, Bergische Universit\"at Wuppertal, Gaussstrasse 20, 42119 Wuppertal, Germany}
\email{wendt@math.uni-wuppertal.de}
% \url{http://www2.math.uni-wuppertal.de/~wendt/}

\subjclass[2010]{Primary: 11F75, Cohomology of arithmetic groups. Secondary: 20C10, Integral representation theory.}
\keywords{}

\begin{abstract}
The goal of the present paper is to push forward the frontiers of computations on mod $\ell$ Farrell--Tate cohomology for arithmetic groups. We deal with $\ell$-rank 1 cases different from $\op{PSL}_2$. The conjugacy classification of cyclic subgroups of order $\ell$ is reduced to the classification of modules of $\op{C}_\ell$-group rings over suitable rings of integers which are principal ideal domains, generalizing an old result of Reiner. As an example of the number-theoretic input required for the Farrell--Tate cohomology computations, we discuss in detail the homological torsion in $\op{PGL}_3$ over principal ideal rings of quadratic integers,
accompanied by machine computations in the imaginary quadratic case. 
\end{abstract}

\maketitle
\setcounter{tocdepth}{1}
\tableofcontents

\section{Introduction}

In the present paper, we investigate the mod $\ell$ Farrell--Tate cohomology of some arithmetic groups $\Gamma$ of $\ell$-rank 1. This means that all elementary abelian $\ell$-groups in $\Gamma$ are in fact cyclic. Via Brown's formula for Farrell--Tate cohomology, we only need to know the number of conjugacy classes of cyclic subgroups of $\Gamma$ of order $\ell$ and the Farrell--Tate cohomology of their normalizers. It is well-known, cf. e.g. the previous computations of mod $\ell$ Farell--Tate cohomology for $\op{SL}_2(\mathcal{O}_{K,S})$ in \cite{sl2ff}, that the conjugacy classification of cyclic subgroups translates in some way into class group questions which can be handled by classical algebraic number theory. 

For the groups $\op{SL}_2(\mathcal{O}_{K,S})$ discussed in \cite{sl2ff}, the modules over the group rings $\mathcal{O}_{K,S}[\op{C}_\ell]$ corresponding to the cyclic subgroups were indecomposable. This is no longer true for the cyclic subgroups in $\op{PGL}_3(\mathcal{O}_{K,S})$, and the interaction between the indecomposable constituents of the corresponding $\mathcal{O}_{K,S}[\op{C}_\ell]$-modules has to be taken into account. On the matrix level, the situation to keep in mind is that of $(3\times 3)$-matrices with integer entries which have an upper triangular block form, one block given by a $(2\times 2)$-matrix of order $\ell$, a further diagonal entry of 1, and additional new problems that need to be handled arise from possible entries in the off-diagonal block. 

To deal with this situation, we provide a generalization of a result of Reiner~\cite{reiner:1955}, concerning the classification of $\mathbb{Z}[\op{C}_\ell]$-modules. We generalize his result to a partial classification of $\mathcal{O}_{K,S}[\op{C}_\ell]$-modules, provided that $\mathcal{O}_{K,S}$ is a principal ideal domain and the ring $\mathcal{O}_{K,S}[\zeta_\ell]$ is a Dedekind domain, cf. Theorem~\ref{thm:fullreiner} and Proposition~\ref{prop:halfreiner}. Since the conjugacy classification for cyclic subgroups translates into the isomorphism classification of such modules over the group rings, this extension of Reiner's result can be used to provide computations of numbers of conjugacy classes in a number of interesting cases. We also extend Reiner's result in a different direction: we describe the automorphism groups of the $\mathcal{O}_{K,S}[\op{C}_\ell]$-modules, and this translates into a computation of the centralizers and normalizers of cyclic subgroups in suitable arithmetic groups like $\op{PGL}_3(\mathcal{O}_{-m})$, cf. Section~\ref{sec:centralizer}. 

With these results in hand, we can then discuss a couple of sample cases and provide formulas for the mod $\ell$ Farrell--Tate cohomology. 
The cases we consider are mod 3 Farrell--Tate cohomology for rings of quadratic integers, cf. Theorem~\ref{thm:gl3_3-torsion} for the imaginary quadratic case and Proposition~\ref{prop:real} for the real quadratic case. We also discuss the 5-torsion for $\op{PGL}_3(\mathcal{O}_{\mathbb{Q}(\sqrt{5})})$, the 7-torsion for $\op{PGL}_3(\mathcal{O}_{\mathbb{Q}(\sqrt{-7})})$ and the $\ell$-torsion in $\op{PGL}_\ell(\mathbb{Z})$, cf.~Section~\ref{sec:examples}. In the imaginary quadratic case, our result describing the homological 3-torsion is the following, for a proof see Section~\ref{proofthm11}. 

\begin{theorem} \label{thm:3-torsion}
Let $A = \Z \times \Z/6$, and let $\Z/2$ act on $A$ by sign inversion to construct $A \rtimes \Z/2$. Then % for $m\in\{1,2,7,11,19,43,67,163\}$, 
the mod $3$ Farrell-Tate cohomology of $\op{PGL}_3(\ringO_{-m})$ admits an isomorphism
$$  
\FT(\op{PGL}_3(\ringO_{-m}); \F_3) \cong \FT(A; \F_3)^{\oplus \lambda} \widetilde{\oplus} \FT(A \rtimes \Z/2; \F_3)^{\oplus \mu},
$$
where $\lambda$ respectively $\mu$ are the numbers of conjugacy classes of order-3-subgroups in  $\op{PGL}_3(\ringO_{-m})$ which do not, respectively which do, have a dihedral overgroup in $\op{PGL}_3(\ringO_{-m})$. 
\end{theorem}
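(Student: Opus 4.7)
The plan is to apply Brown's formula for mod $\ell$ Farrell--Tate cohomology, which for a group $\Gamma$ of $\ell$-rank one expresses $\FT(\Gamma;\F_\ell)$ as a direct sum, indexed by the conjugacy classes of order-$\ell$ subgroups $P\le \Gamma$, of the Farrell--Tate cohomology $\FT(N_\Gamma(P);\F_\ell)$ of their normalizers. To invoke this for $\Gamma=\op{PGL}_3(\ringO_{-m})$ and $\ell=3$, I would first verify $3$-rank one: a copy of $(\Z/3)^2$ would force two independent eigenvalues equal to $\zeta_3$ inside $3\times 3$ matrices over $\ringO_{-m}$, which cannot happen since $\zeta_3\notin\ringO_{-m}$ for the listed values of $m$. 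This reduces the computation to enumerating conjugacy classes of order-$3$ subgroups and describing their normalizers.

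For the enumeration, I would translate the conjugacy classification into the isomorphism classification of faithful $\ringO_{-m}[\op{C}_3]$-lattices of $\ringO_{-m}$-rank three and then apply Theorem~\ref{thm:fullreiner} together with Proposition~\ref{prop:halfreiner}. After tensoring with the field, such a lattice splits into a trivial $\op{C}_3$-summand of rank one and a summand on which $\op{C}_3$ acts via $\zeta_3$; over $\ringO_{-m}$ the lattice need not split, and the generalized Reiner theorem records the possible indecomposable constituents together with their off-diagonal extension data. Using the class-number-one hypothesis on $\ringO_{-m}$ and the Dedekind property of $\ringO_{-m}[\zeta_3]$, this produces a finite count; I would then partition the resulting modules into those whose $\op{C}_3$ admits an order-$2$ inverting automorphism inside $\op{PGL}_3(\ringO_{-m})$ and those without, giving the numbers $\mu$ and $\lambda$.

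Next I would compute the normalizers $N_\Gamma(P)$ via the automorphism-group extension of Reiner's theorem (Section~\ref{sec:centralizer}). The centralizer $C_\Gamma(P)$ is, up to a torsion-free part invisible to mod-$3$ Farrell--Tate cohomology, controlled by the unit group of the ring of integers of the biquadratic field $\mathbb{Q}(\sqrt{-m},\zeta_3)$; this field is totally complex with Dirichlet rank $r_1+r_2-1=1$, giving the $\Z$ factor, and contains the sixth roots of unity, giving the $\Z/6$ factor, so the torsion-relevant model of $C_\Gamma(P)$ is $A=\Z\times\Z/6$. The outer quotient $N_\Gamma(P)/C_\Gamma(P)$ embeds into $\op{Aut}(P)=\Z/2$, and is non-trivial exactly when some element of $\Gamma$ conjugates a generator of $P$ to its inverse, which is the same as $P$ being contained in a dihedral subgroup of order $6$ in $\Gamma$. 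Substituting the two resulting normalizer shapes into Brown's formula produces the claimed decomposition $\FT(A;\F_3)^{\oplus\lambda}\widetilde{\oplus}\FT(A\rtimes \Z/2;\F_3)^{\oplus\mu}$.

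The step I expect to be the main obstacle is the module-to-conjugacy bookkeeping, specifically the detection of inverting involutions. Even granted Theorem~\ref{thm:fullreiner}, one must pin down precisely which $\ringO_{-m}[\op{C}_3]$-lattices admit an anti-involution realizing the outer $\Z/2$-action on $\op{C}_3$ through an element of $\op{GL}_3(\ringO_{-m})$ modulo centre, and this depends delicately on the unit group $\ringO_{-m}^\times$ (in particular the extra fourth roots of unity present for $m=1$) and on how the extension datum between the trivial and the $\zeta_3$-isotypic summand transforms under a candidate involution. The remaining case-by-case verification that all centralizers share the same torsion-relevant model $A$, uniformly over the eight fields, is then a routine check using the class-number-one list.
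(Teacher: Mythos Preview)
Your proposal is correct and follows essentially the same route as the paper: Brown's formula for $\ell$-rank one, the degree argument for $3$-rank one, the Reiner-type module classification for conjugacy classes, and the identification of centralizers and normalizers via Lemmas~\ref{lem:fiber} and~\ref{lem:semidirect}. The only place where the paper is sharper than your sketch is in the centralizer description: rather than ``controlled by'' $\mathcal{O}_{-m}[\zeta_3]^\times$, the centralizer is precisely the fiber product $\mathcal{O}_{-m}[\zeta_3]^\times\times^c_{\op{End}(\mathcal{O}_{-m}/(3))}\mathcal{O}_{-m}^\times$ of Lemma~\ref{lem:fiber}, and the dihedral-overgroup dichotomy is detected by whether the Galois group $\op{Gal}(\mathbb{Q}(\sqrt{-m},\zeta_3)/\mathbb{Q}(\sqrt{-m}))$ fixes the ideal class (Lemma~\ref{lem:semidirect}) --- this is the concrete criterion that resolves the ``main obstacle'' you anticipate.
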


Here, the cohomology rings $\FT(A; \F_3)$ and $\FT(A \rtimes \Z/2; \F_3)$ are described explicitly in Proposition~\ref{prop:ftformula} and the discussion that follows it. In the cases where $\ringO_{-m}$ is a principal ideal domain and $3\nmid m$, the numbers $\lambda$ and $\mu$ can be computed in terms of the Galois-action on the class group of $\mathbb{Q}(\sqrt{-m},\zeta_3)$ and the action of $\mathcal{O}_{\mathbb{Q}(\sqrt{-m},\zeta_3)}^\times$ on $\mathcal{O}_{-m}/(3)$, cf. Section~\ref{sec:examples}, in particular Theorem~\ref{thm:gl3_3-torsion}.
In the cases where $\mathcal{O}_{-m}$ is of very small discriminant absolute value, we can compute $\lambda$ and $\mu$ on a machine, cf. Table~\ref{results-table}, as follows.
We start with Voronoi cell complexes with $\op{GL}_3(\mathcal{O}_{\mathbb{Q}(\sqrt{-m})})$-action, constructed with Sch\"onnenbeck's software~\cites{Sebastian, BCNS}. Using a cell subdivision algorithm that was recently introduced by the authors~\cite{psl4z}, it has been possible to extract the $\ell$-torsion subcomplexes. Then for each conjugacy class of cyclic subgroups of order $\ell$, we get a connected component of the reduced $\ell$-torsion subcomplex, which determines the Farrell--Tate cohomology of its normalizer and tells us whether it contributes to the quantity $\lambda$ or $\mu$ in Theorem~\ref{thm:3-torsion}.
We will describe the machine computations in Section~\ref{sec:machine_computations}, our results being displayed in Table~\ref{results-table}. 

For the cases $m=1,2,7,11,19$, this coincides precisely with the number-theoretic formulas discussed in Section~\ref{sec:examples}. Note that the cases $m=15$ and $m=5$ are cases where the conditions of Theorem~\ref{thm:gl3_3-torsion} are not satisfied: in both cases, $\mathcal{O}_{-m}$ is not a principal ideal domain, and for $m=15$ an additional requirement for our analysis, $3\nmid m$, is violated. In these cases, we cannot yet provide general number-theoretic counts for the conjugacy classes of order-3-subgroups, but see the discussion of the case $\mathcal{O}_{-5}$ in Appendix~\ref{sec:o5}.
 %This way, our results on the Farrell--Tate cohomology $\FT(\op{PGL}_3(\ringO_{m}); \F_\ell)$ for rings $\mathcal{O}_{m}$ of quadratic integers cover all odd primes $\ell$: $5$-torsion only appears in $\op{GL}_3(\mathcal{O}_{\mathbb{Q}(\sqrt{5})})$ and $7$-torsion only appears in $\op{GL}_3(\mathcal{O}_{\mathbb{Q}(\sqrt{-7})})$, cf. the discussion in Section~\ref{sec:prelims}.  

\emph{Structure of the paper:} A few preliminary statements are made in Section~\ref{sec:prelims}, and the translation between conjugacy classes of cyclic subgroups and modules over the group rings for the cyclic groups is explained in Section~\ref{sec:conjugacy}. We discuss a generalization of Reiner's classification of modules over group rings in Section~\ref{sec:groupring}. A description of the centralizers and normalizers of cyclic subgroups in terms of the automorphisms of the corresponding modules over the group rings is given in Section~\ref{sec:centralizer}. Finally, the sample computations are given in Section~\ref{sec:examples}, and we discuss the machine computations in Section~\ref{sec:machine_computations}.
In Appendix~\ref{sec:o5}, we discuss a sample case which exhibits the difficulties in generalizing our counting of the conjugacy classes of order-3-subgroups beyond its setting over the principal ideal domains.

\begin{table}
$$ 
\begin{array}{|l|c|c|c|c|c|c|c|}
        \hline  m & 1 & 2 & 7 & 11 & 15 & 19 & 5\\
  \hline \lambda  & 0 & 0 & 0 & 0 & 0 & 0 & 1 \\  
  \hline \mu      & 2 & 4 & 3 & 4 & 7 & 3 & 8 \\ \hline
 \end{array}
$$
\caption{Machine results on the parameters $\lambda$ and $\mu$ of Theorem \ref{thm:3-torsion}.}\label{results-table}
\end{table}

\section{Preliminaries} 
\label{sec:prelims}

\subsection{Finite subgroups}

In the present paper, we will deal with the Farrell--Tate cohomology of arithmetic groups $\Gamma$ which are of $\ell$-rank 1. This means that the maximal abelian $\ell$-subgroup of $G$ has rank 1. Note that if the arithmetic group 
$\Gamma = \op{PGL}_n(\mathcal{O}_{K,S})$ for $n\geq 2$ contains a non-trivial cyclic group $\op{C}_\ell$, then \mbox{$[K(\zeta_\ell):K]\leq n$.} 
In that case, if the $\ell$-rank is 1, then $[K(\zeta_\ell):K]>\frac{n}{2}$ (and that already excludes $\ell=2$). 
In the specific case $\Gamma=\op{PGL}_3(\mathcal{O}_{K,S})$ this means that the degree $[K(\zeta_\ell):K]$ is either $2$ or $3$. 
In the case where $K=\mathbb{Q}(\sqrt{-m})$ is an imaginary quadratic field, this restricts us to the primes $\ell=3,5,7$. Actually, the case $\ell=5$ does not appear since the quadratic subfield of $\mathbb{Q}(\zeta_5)$ is $\mathbb{Q}(\sqrt{5})$ which is real, and the case $\ell=7$ only appears for the quadratic subfield $\mathbb{Q}(\sqrt{-7})$ of $\mathbb{Q}(\zeta_7)$. 

For example, if there is a $5$-torsion element in $\op{GL}_3(\mathcal{O}_{\mathbb{Q}(\sqrt{-m})})$, then the cyclotomic field $\rationals(\zeta_5)$ must embed into the matrix algebra M$_3(\rationals(\sqrt{-m}))$ with $\zeta_5$ mapping to the $5$-torsion element. This is only possible if $[\rationals(\zeta_5,\sqrt{-m}):\rationals(\sqrt{-m})]\leq 2$, because subfields of the $3\times 3$-matrix algebra can only have degree at most~$3$. The cyclotomic field $\rationals(\zeta_5)$ has a unique quadratic subfield which is $\rationals(\sqrt{5})$ and therefore the composite $\rationals(\zeta_5,\sqrt{-m})$ has degree $8$ whenever $m$ is positive. (In examples, this can also be checked by Pari/GP~\cite{PariGP}. For instance, a minimal polynomial for the composite $\mathbb{Q}(\zeta_5,\sqrt{-5})=\mathbb{Q}(\zeta_5,\sqrt{-1})$ is given by $x^8-x^6+x^4-x^2+1$, and a minimal polynomial for the composite $\mathbb{Q}(\zeta_5,\sqrt{-3})=\mathbb{Q}(\zeta_5,\zeta_3)$ is given by $x^8-x^7+x^5-x^4+x^3-x+1$.) In particular, there can be no $5$-torsion elements in GL$_3$ over imaginary quadratic number rings.

\subsection{Farrell--Tate cohomology and Brown's formula}

To compute the relevant examples of Farrell--Tate cohomology of linear groups, we will use Brown's formula for $\ell$-rank $1$, cf.~\cite{Brown}*{Corollary X.7.4}. 
In this case, Brown's complex of elementary abelian $\ell$-subgroups of $\Gamma$ (also known as the Quillen complex) 
is in fact a disjoint union of conjugacy classes of cyclic $\ell$-subgroups of $\Gamma$, and the formula is given by
\[
\widehat{\op{H}}^\bullet(\Gamma,\mathbb{F}_\ell)\cong \prod_{[G\leq\Gamma], G\textrm{ cyclic}} \widetilde{\op{H}}^\bullet(\op{C}_{\op{\Gamma}}(G),\mathbb{F}_\ell)^{\op{N}_{\Gamma}(G)/\op{C}_{\Gamma}(G)},
\]
where the sum on the right is indexed by the conjugacy classes of finite cyclic subgroups of $\Gamma$. To evaluate the formula, we need to determine the conjugacy classes of cyclic subgroups as well as the structure of their normalizers. These questions can be translated into questions about the isomorphism classification of modules over groups rings, and the question of automorphism groups of such modules. For cyclic groups, these questions can be approached using the classical work of Reiner, cf. \cite{reiner:1955}. 

\section{Conjugacy classification of cyclic subgroups}
\label{sec:conjugacy}

In this section, we will relate the conjugacy classification of cyclic subgroups $\op{C}_\ell$ in general linear groups over $S$-integer rings $\mathcal{O}_{K,S}$ to the isomorphism classification of modules over the group rings $\mathcal{O}_{K,S}[\op{C}_\ell]$; this is a rather classical argument, cf. \cite{latimer:macduffee}. The isomorphism classification will be done in the next section, generalizing Reiner's article \cite{reiner:1955} on the isomorphism classification of modules over the integral group ring $\mathbb{Z}[\op{C}_\ell]$. 

%Denote by $\mathcal{O}_{K,S}$ a ring of $S$-integers in a global field $K$, and let $\op{C}_\ell$ be the cyclic group of order~$\ell$. The goal is the conjugacy classification of embeddings $\op{C}_\ell\hookrightarrow \op{GL}_n(\mathcal{O}_{K,S})$. 

\begin{proposition}
\label{prop:conjiso}
There is an injection from the set of conjugacy classes of embeddings $\op{C}_\ell\hookrightarrow \op{GL}_n(\mathcal{O}_{K,S})$ to the set of isomorphism classes of $\mathcal{O}_{K,S}[\op{C}_\ell]$-modules whose underlying $\mathcal{O}_{K,S}$-module is free of rank $n$. The only isomorphism class not in the image is the one where the $\op{C}_\ell$-action is trivial. 
\end{proposition}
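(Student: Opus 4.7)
The plan is to set up the standard Latimer--MacDuffee-type dictionary between matrix representations up to conjugation and module structures up to isomorphism, and then check that non-trivially acting modules correspond to injective homomorphisms.

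First, I would define the map. Given an embedding $\phi\colon \op{C}_\ell\hookrightarrow \op{GL}_n(\mathcal{O}_{K,S})$, let $M_\phi$ denote $\mathcal{O}_{K,S}^n$ equipped with the $\mathcal{O}_{K,S}[\op{C}_\ell]$-module structure in which a generator of $\op{C}_\ell$ acts by the matrix $\phi(c)$. By construction the underlying $\mathcal{O}_{K,S}$-module is free of rank $n$. Then I would show the assignment $\phi\mapsto M_\phi$ is well-defined on conjugacy classes: if $\phi_2(c)=g\phi_1(c)g^{-1}$ for some $g\in\op{GL}_n(\mathcal{O}_{K,S})$ and all $c\in\op{C}_\ell$, then left-multiplication by $g$ is an $\mathcal{O}_{K,S}$-linear automorphism intertwining the two $\op{C}_\ell$-actions, hence an $\mathcal{O}_{K,S}[\op{C}_\ell]$-module isomorphism $M_{\phi_1}\xrightarrow{\sim} M_{\phi_2}$.

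Next, I would prove injectivity, which is just running the same argument backwards. Suppose $M_{\phi_1}\cong M_{\phi_2}$ as $\mathcal{O}_{K,S}[\op{C}_\ell]$-modules via an isomorphism $f$. Then $f$ is in particular $\mathcal{O}_{K,S}$-linear, and with respect to the standard bases of $M_{\phi_1}$ and $M_{\phi_2}$ it is represented by some $g\in\op{GL}_n(\mathcal{O}_{K,S})$. The $\op{C}_\ell$-equivariance $f\circ\phi_1(c)=\phi_2(c)\circ f$ translates into the matrix identity $\phi_2(c)=g\phi_1(c)g^{-1}$, so $\phi_1$ and $\phi_2$ define conjugate embeddings.

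Finally, I would identify the image. Starting from any $\mathcal{O}_{K,S}[\op{C}_\ell]$-module $M$ whose underlying $\mathcal{O}_{K,S}$-module is free of rank $n$, choosing a basis gives a group homomorphism $\rho\colon\op{C}_\ell\to\op{GL}_n(\mathcal{O}_{K,S})$. Different choices of basis produce conjugate homomorphisms, so the conjugacy class of $\rho$ depends only on the isomorphism class of $M$. The module $M$ lies in the image of our map precisely when $\rho$ is injective, i.e.\ when the $\op{C}_\ell$-action on $M$ is faithful. Since $|\op{C}_\ell|=\ell$ is prime, the kernel of $\rho$ is either trivial or all of $\op{C}_\ell$, so faithfulness is equivalent to the action being non-trivial. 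Thus the unique isomorphism class omitted from the image is the one where $\op{C}_\ell$ acts trivially on $\mathcal{O}_{K,S}^n$.

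There is no real obstacle here; the content is purely bookkeeping, and the only point that requires a small comment is the observation that for a cyclic group of prime order $\ell$ any nonzero homomorphism to $\op{GL}_n$ is automatically injective, which is what makes the description of the image clean.
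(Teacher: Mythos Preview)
Your proposal is correct and follows essentially the same approach as the paper: both set up the dictionary $\phi\mapsto (\mathcal{O}_{K,S}^n,\phi)$, verify that conjugation corresponds to module isomorphism in both directions, and observe that a nontrivial $\op{C}_\ell$-action is automatically faithful since $\ell$ is prime. The paper organizes the argument into four labeled steps (i)--(iv) and phrases the compatibility via commutative diagrams, but the content is identical.
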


\begin{proof}
(i) Assume we have a subgroup $\op{C}_\ell\hookrightarrow \op{GL}_n(\mathcal{O}_{K,S})$. In particular, we have a non-trivial action of $\op{C}_\ell$ on $M=\mathcal{O}_{K,S}^{\oplus n}$. We use this action to turn $M$ into an $\mathcal{O}_{K,S}[\op{C}_\ell]$-module by letting the element $[g]$ for $g\in\op{C}_\ell$ act via the embedding $\op{C}_\ell\hookrightarrow\op{GL}_n(\mathcal{O}_{K,S})$. 

(ii) Assume we have two subgroups $\phi,\phi'\colon\op{C}_\ell\hookrightarrow\op{GL}_n(\mathcal{O}_{K,S})$ which are conjugate. Then any conjugating matrix $A$ gives rise to commutative diagrams
\[
\xymatrix{
\mathcal{O}_{K,S}^{\oplus n}\ar[r]^{\phi(g)} \ar[d]_A & \mathcal{O}_{K,S}^{\oplus n} \ar[d]^A \\
\mathcal{O}_{K,S}^{\oplus n}\ar[r]_{\phi'(g)} & \mathcal{O}_{K,S}^{\oplus n} 
}
\]
showing that the two $\mathcal{O}_{K,S}[\op{C}_\ell]$-modules associated to $\phi$ and $\phi'$ are isomorphic via $A$.

(iii) Conversely, assume we have an $\mathcal{O}_{K,S}[\op{C}_\ell]$-module $M$ whose underlying $\mathcal{O}_{K,S}$-module is free of rank $n$. We choose an $\mathcal{O}_{K,S}$-basis for $M$. The representing matrices for the automorphisms $[g]$ for $g\in\op{C}_\ell$ provide an embedding $\op{C}_\ell\hookrightarrow \op{GL}_n(\mathcal{O}_{K,S})$ 
since by assumption the action of $\op{C}_\ell$ is non-trivial. Different choices of basis will give rise to subgroups which are conjugate via change-of-basis matrices.

(iv) Assume we have an isomorphism $f\colon M\cong M'$ of $\mathcal{O}_{K,S}[\op{C}_\ell]$-modules as in (iii). Then a choice of basis for $M$ induces a choice of basis for $M'$ via $f$. With these choices of bases, the modules $M$ and $M'$ give rise to the same subgroup of $\op{GL}_n(\mathcal{O}_{K,S})$. The independence-of-basis statement in (iii) implies that the subgroups associated to $M$ and $M'$ (for arbitrary choices of bases) are conjugate.
\end{proof}

\begin{remark}
It should be pointed out that there is a difference between conjugacy classes of embeddings $\op{C}_\ell\hookrightarrow\op{GL}_n(\mathcal{O}_{K,S})$ and conjugacy classes of cyclic subgroups of $\op{GL}_n(\mathcal{O}_{K,S})$ of order $\ell$. For a non-trivial automorphism $\phi\colon\op{C}_\ell\to\op{C}_\ell$ and some embedding $\iota\colon\op{C}_\ell\hookrightarrow\op{GL}_n(\mathcal{O}_{K,S})$, the two embeddings $\iota$ and $\iota\circ \phi$ are non-conjugate while obviously the images of $\iota$ and $\iota\circ\phi$ are equal as subgroups. This is similar to the difference between the conjugacy classification of order $\ell$ elements and order $\ell$ subgroups in \cite{sl2ff}. 
\end{remark}

Let $\phi\colon \op{C}_\ell\to\op{C}_\ell$ be an automorphism of the cyclic group. Then $\phi$ induces an $\mathcal{O}_{K,S}$-linear automorphism of $\mathcal{O}_{K,S}[\op{C}_\ell]$ in the obvious way. For the purposes of the next result, we call such automorphisms \emph{special}. 

\begin{corollary}
\label{cor:conjiso}
The injection of Proposition~\ref{prop:conjiso} gives rise to a bijection from the conjugacy classes of order $\ell$ subgroups in $\op{GL}_n(\mathcal{O}_{K,S})$ to the quotient of the set of isomorphism classes of $\mathcal{O}_{K,S}$-free $\mathcal{O}_{K,S}[\op{C}_\ell]$-modules by the special automorphisms. 

Under the bijection of Proposition~\ref{prop:conjiso}, the centralizer of a subgroup $\op{C}_\ell\hookrightarrow\op{GL}_n(\mathcal{O}_{K,S})$ is isomorphic to the $\mathcal{O}_{K,S}[\op{C}_\ell]$-automorphism group of the corresponding module $M$. The normalizer is isomorphic to the group of $\mathcal{O}_{K,S}$-automorphisms which are semilinear with respect to a special automorphism of $\mathcal{O}_{K,S}[\op{C}_\ell]$.
\end{corollary}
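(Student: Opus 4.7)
The plan is to deduce all three assertions from Proposition~\ref{prop:conjiso} by carefully tracking what changes when we pass from embeddings to subgroups, and what operations on matrices translate to on modules.

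For the first assertion (the bijection), I start from the remark that the set of conjugacy classes of order $\ell$ subgroups of $\op{GL}_n(\mathcal{O}_{K,S})$ is the quotient of the set of conjugacy classes of embeddings $\op{C}_\ell\hookrightarrow\op{GL}_n(\mathcal{O}_{K,S})$ by the natural right action of $\op{Aut}(\op{C}_\ell)$ via precomposition: two embeddings $\iota,\iota'$ have conjugate images if and only if $\iota$ is conjugate to $\iota'\circ\phi$ for some $\phi\in\op{Aut}(\op{C}_\ell)$. So I need to show that precomposing an embedding with $\phi$ corresponds, under the bijection of Proposition~\ref{prop:conjiso}, to twisting the $\mathcal{O}_{K,S}[\op{C}_\ell]$-module structure by the special automorphism $\phi_\ast\colon \mathcal{O}_{K,S}[\op{C}_\ell]\to\mathcal{O}_{K,S}[\op{C}_\ell]$ (i.e.\ restriction of scalars along $\phi_\ast^{-1}$). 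This is immediate from part (i) of the proof of Proposition~\ref{prop:conjiso}: the module attached to $\iota\circ\phi$ has the same underlying $\mathcal{O}_{K,S}$-module as that attached to $\iota$, but with $[g]$ acting via $\iota(\phi(g))$, which is precisely the twist by $\phi_\ast$. Taking the quotient of isomorphism classes by this action of $\op{Aut}(\op{C}_\ell)$ on the target of the injection yields the claimed bijection.

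For the centralizer statement, fix an embedding $\iota\colon\op{C}_\ell\hookrightarrow\op{GL}_n(\mathcal{O}_{K,S})$ and the associated module $M$. A matrix $A\in\op{GL}_n(\mathcal{O}_{K,S})$ centralizes $\iota(\op{C}_\ell)$ precisely when $A\iota(g)=\iota(g)A$ as endomorphisms of $M$ for every $g\in\op{C}_\ell$; that is exactly the condition that $A$ be $\op{C}_\ell$-equivariant, hence an $\mathcal{O}_{K,S}[\op{C}_\ell]$-module automorphism of $M$. The converse direction is equally tautological once one has chosen a basis of $M$ as in part (iii) of the proof of Proposition~\ref{prop:conjiso}, so this identifies the centralizer with $\op{Aut}_{\mathcal{O}_{K,S}[\op{C}_\ell]}(M)$.

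For the normalizer, $A$ normalizes $\iota(\op{C}_\ell)$ if and only if conjugation by $A$ restricts to an automorphism of the subgroup $\iota(\op{C}_\ell)$, equivalently there exists a (unique) $\phi\in\op{Aut}(\op{C}_\ell)$ with $A\iota(g)A^{-1}=\iota(\phi(g))$ for all $g\in\op{C}_\ell$. Rewritten on $M$, this reads $A(g\cdot v)=\phi(g)\cdot A(v)$, which is precisely the definition of $\phi_\ast$-semilinearity of $A$ with respect to the special automorphism $\phi_\ast$ of $\mathcal{O}_{K,S}[\op{C}_\ell]$. The assignment $A\mapsto\phi$ is a group homomorphism with kernel the centralizer, producing the desired description of the normalizer.

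I do not anticipate a real obstacle: the argument is essentially a repackaging of the (already elementary) proof of Proposition~\ref{prop:conjiso}. The only point requiring a moment of care is to check that the action of $\op{Aut}(\op{C}_\ell)$ on conjugacy classes of embeddings matches, on the module side, precisely the \emph{special} automorphisms of $\mathcal{O}_{K,S}[\op{C}_\ell]$ (and not, say, a larger group of ring automorphisms); this comes down to the definition of ``special'' given just before the corollary.
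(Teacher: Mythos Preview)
Your proposal is correct and follows essentially the same approach as the paper's proof: both identify centralizing matrices with $\mathcal{O}_{K,S}[\op{C}_\ell]$-automorphisms via the commutation condition $A\iota(g)=\iota(g)A$, and normalizing matrices with semilinear automorphisms via $A\iota(g)A^{-1}=\iota(\phi(g))$. If anything, you are more explicit than the paper about the first assertion (the passage from embeddings to subgroups via the $\op{Aut}(\op{C}_\ell)$-action), which the paper's proof leaves largely implicit.
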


\begin{proof}
We consider a fixed subgroup (as opposed to a conjugacy class), and consider the associated module $M$, equipped with the corresponding choice of basis. Then a matrix $A$ in the centralizer of $\iota\colon \op{C}_\ell\hookrightarrow\op{GL}_n(\mathcal{O}_{K,S})$ provides commutative diagrams for all $g\in\op{C}_\ell$:
\[
\xymatrix{
\mathcal{O}_{K,S}^{\oplus n}\ar[r]^{\iota(g)} \ar[d]_A & \mathcal{O}_{K,S}^{\oplus n} \ar[d]^A \\
\mathcal{O}_{K,S}^{\oplus n}\ar[r]_{\iota(g)} & \mathcal{O}_{K,S}^{\oplus n}. 
}
\]
As in the proof of Proposition~\ref{prop:conjiso}, this provides an automorphism of the $\mathcal{O}_{K,S}[\op{C}_\ell]$-module $M$. This construction is obviously compatible with composition. 

Conversely, an $\mathcal{O}_{K,S}[\op{C}_\ell]$-automorphism of the module $M$ corresponding to $\iota(\op{C}_\ell)$ provides a change-of-basis matrix which is in the centralizer of $\iota(\op{C}_\ell)$. 
Again, this is obviously compatible with composition. 

The two constructions above are inverses, proving the claim for the centralizer. 
The claims for the normalizer are proved in the same way, changing the lower morphism in the commutative diagram from $\iota(g)$ to $\phi\circ\iota(g)$. 
\end{proof}

\begin{remark}
We will see later that the semi-linear automorphisms correspond to the action of the Galois group of $\mathcal{O}_{K,S}[\zeta_\ell]$ over $\mathcal{O}_{K,S}$ on $\op{GL}_n(\mathcal{O}_{K,S})$. Consequently, one of the contributions to the conjugacy classification is given by the Galois-orbits on the class group.
\end{remark}

\section{Modules over cyclic group rings}
\label{sec:groupring}

In this section, we provide a generalization of Reiner's classification of $\mathbb{Z}[\op{C}_\ell]$-modules, cf. \cite{reiner:1955}. Reiner's analysis of the modules over the group ring $\mathbb{Z}[\op{C}_\ell]$ is essentially based on the class group theory for cyclotomic integers. In the generalization to rings of $S$-integers,  we will therefore need some assumption on the situation, related to existence of relative integral bases.

\subsection{Relative integral bases}

As usual, denote by $\Phi_\ell(T)$ the $\ell$-th cyclotomic polynomial. If $\Phi_\ell(T)$ is not $K$-irreducible, then the degree of $\zeta_\ell$ over $K$ is a strict divisor of the degree of $\Phi_\ell(T)$. In this case, $\mathcal{O}_{K,S}[\zeta_\ell]=\mathcal{O}_{K,S}[T]/(\Psi_\ell(T))$ where $\Psi_\ell(T)$ is the minimal polynomial of $\zeta_\ell$ over $K$. 
\begin{quote}
\emph{To get a full analogue of Reiner's result, we assume that the ring $\mathcal{O}_{K,S}[T]/(\Phi_\ell(T))$ is a Dedekind domain. Some results will work under the weaker hypothesis that $\mathcal{O}_{K,S}[\zeta_\ell]$ is a Dedekind domain. We will make these cases explicit.}
\end{quote}
Note that even if $\mathcal{O}_{K,S}[\zeta_\ell]$ is a Dedekind domain, $\mathcal{O}_{K,S}[T]/(\Phi_\ell(T))$ need not be a Dedekind domain. If $\Phi_\ell(T)$ is not $K$-irreducible, then the total ring of fractions is $K[T]/(\Phi_\ell(T))$ which is a direct sum of copies of $K(\zeta_\ell)$, corresponding to the number of $K$-factors of $\Phi_\ell(T)$.

\begin{example} \label{ex:2factors}
In the case $K=\mathbb{Q}(\sqrt{-7})$ and $\ell=7$, denote by $\op{N}_7=\sum_{i=0}^6[g^i]$ the norm element in $\mathbb{Z}[\op{C}_7]$, where $g\in\op{C}_7$ is a generator. 
Then $\mathcal{O}_{K}[\op{C}_\ell]/(\op{N}_7) \cong \mathcal{O}_K[T]/(\Phi_\ell(T))$ is a fiber product of two copies of $\mathcal{O}_{K}[\zeta_7]$ over the quotient $\mathcal{O}_K[\zeta_7]/(\sqrt{-7}^3)$ where $\sqrt{-7}^3$ is the resultant of the two $K$-factors of $\Phi_7(T)=\Psi_7(T)\cdot\overline{\Psi_7(T)}$, 
\begin{eqnarray*}
 \Psi_7(T) 
 & = & T^3-(\zeta_7+\zeta_7^2+\zeta_7^4)T^2+(\zeta_7^3+\zeta_7^5+\zeta_7^6)T-1
 \\
 & = &
 T^3+\left(\frac{1-\sqrt{-7}}{2}\right)T^2-
 \left(\frac{1+\sqrt{-7}}{2}\right)T-1
 \end{eqnarray*}
 and 
\begin{eqnarray*}
 \overline{\Psi_7(T)} 
 & = & T^3-(\zeta_7^3+\zeta_7^5+\zeta_7^6)T^2+(\zeta_7+\zeta_7^2+\zeta_7^4)T-1
 \\
 & = & T^3+\left(\frac{1+\sqrt{-7}}{2}\right)T^2-
 \left(\frac{1-\sqrt{-7}}{2}\right)T-1.
\end{eqnarray*}
\end{example}

The Dedekind domain requirement is crucial because it provides a bijection between isomorphism classes of finitely generated torsion-free modules of fixed rank $n$ and the class group. The ring $\mathcal{O}_{K,S}[\zeta_\ell]$ is a Dedekind ring precisely when the relevant powers of $\zeta_\ell$ form a relative integral basis of $K(\zeta_\ell)/K$. For most of our purposes, the following statement will be sufficient.

\begin{lemma}
\label{lem:basis}
Let $K/\mathbb{Q}$ be a Galois extension with discriminant $d_K$, and let $\ell$ be a prime with $(\ell,d_K)=1$. Then 
\[
\mathcal{O}_{K(\zeta_\ell)}=\mathcal{O}_K[\zeta_\ell]\cong \mathcal{O}_K[T]/(\Phi_\ell(T)).
\]
\end{lemma}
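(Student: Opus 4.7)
The plan is to invoke the classical theorem on integral bases of compositums with coprime discriminants. First I would compute the discriminant of $\mathbb{Q}(\zeta_\ell)/\mathbb{Q}$: it is (up to sign) a power of $\ell$, namely $\pm \ell^{\ell-2}$. Combined with the hypothesis $(\ell,d_K)=1$, this gives $\gcd(d_K,d_{\mathbb{Q}(\zeta_\ell)})=1$.

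Next I would establish linear disjointness of $K$ and $\mathbb{Q}(\zeta_\ell)$ over $\mathbb{Q}$. Let $L = K \cap \mathbb{Q}(\zeta_\ell)$; since $L$ is a subfield of both $K$ and $\mathbb{Q}(\zeta_\ell)$, its discriminant $d_L$ divides both $d_K$ and $d_{\mathbb{Q}(\zeta_\ell)}$, hence $d_L = \pm 1$. Minkowski's bound then forces $L = \mathbb{Q}$. In particular, $[K(\zeta_\ell):K] = \ell - 1$, so $\Phi_\ell(T)$ remains irreducible over $K$ and is the minimal polynomial of $\zeta_\ell$; this yields the second isomorphism $\mathcal{O}_K[\zeta_\ell] \cong \mathcal{O}_K[T]/(\Phi_\ell(T))$ for free once the first equality is known.

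For the first equality, I would cite the standard result (e.g.\ from Narkiewicz's book on algebraic number theory): if $K$ and $L$ are number fields with $[KL:\mathbb{Q}] = [K:\mathbb{Q}]\cdot[L:\mathbb{Q}]$ and $\gcd(d_K,d_L)=1$, then $\mathcal{O}_{KL} = \mathcal{O}_K\mathcal{O}_L$. Applying this with $L = \mathbb{Q}(\zeta_\ell)$ and using $\mathcal{O}_{\mathbb{Q}(\zeta_\ell)} = \mathbb{Z}[\zeta_\ell]$ gives $\mathcal{O}_{K(\zeta_\ell)} = \mathcal{O}_K[\zeta_\ell]$, as desired.

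There is no real obstacle here; the statement is essentially a direct specialisation of the coprime-discriminant compositum theorem. The only point requiring a small argument is the reduction from ``coprime discriminants'' to ``linearly disjoint fields,'' which is handled by the Minkowski bound as above. The Galois hypothesis on $K/\mathbb{Q}$ is not actually needed for this proof, but it is harmless and consistent with the applications later in the paper.
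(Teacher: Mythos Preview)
Your proposal is correct and follows essentially the same approach as the paper: both observe that $d_{\mathbb{Q}(\zeta_\ell)}$ is a power of $\ell$, hence coprime to $d_K$, and then invoke the classical compositum theorem to conclude that $\mathcal{O}_K\cdot\mathbb{Z}[\zeta_\ell]=\mathcal{O}_{K(\zeta_\ell)}$. Your write-up is in fact slightly more careful than the paper's, since you explicitly verify linear disjointness via the Minkowski bound (which the paper tacitly assumes) and you address the isomorphism $\mathcal{O}_K[\zeta_\ell]\cong\mathcal{O}_K[T]/(\Phi_\ell(T))$ separately; your observation that the Galois hypothesis on $K/\mathbb{Q}$ is not actually used is also correct.
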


\begin{proof}
The discriminant of $\mathbb{Q}(\zeta_\ell)/\mathbb{Q}$ is a power of $\ell$ so  that by assumption the discriminants of $K$ and $\mathbb{Q}(\zeta_\ell)$ are coprime. Then the product of the integral bases of $K/\mathbb{Q}$ and $\mathbb{Q}(\zeta_\ell)/\mathbb{Q}$ is an integral basis of $K(\zeta_\ell)/\mathbb{Q}$. In particular, any element of $\mathcal{O}_{K(\zeta_\ell)}$ is an $\mathcal{O}_K$-linear combination of $1,\zeta_\ell,\dots,\zeta_\ell^{\ell-1}$, hence these form a relative integral basis of $K(\zeta_\ell)/K$.
\end{proof}

\begin{remark}
For the case $K=\mathbb{Q}(\sqrt{-m})$ and $3\mid m$, there does not exist a relative integral basis for the extension $\mathbb{Q}(\sqrt{-m},\zeta_3)/\mathbb{Q}(\sqrt{-m})$. In particular, the ring $\mathcal{O}_{\sqrt{-m}}[\zeta_3]$ is not the maximal order of $\mathbb{Q}(\sqrt{-m},\zeta_3)$ and hence is not a Dedekind ring. This follows from \cite{bird:parry}*{Theorem I}. 
\end{remark}

\subsection{Isomorphism classification} 

We can now provide our extension of Reiner's study of isomorphism classes of modules over group rings for cyclic groups, mostly following the arguments of \cite{reiner:1955}. The situation is the following: let $K$ be a number field, let $S$ be a finite set of places containing the infinite ones, and denote by $\mathcal{O}_{K,S}$ the ring of $S$-integers in $K$. Denote by $\op{C}_\ell$ the cyclic group of order $\ell$ where $\ell$ is a prime. In some cases relevant to computations of Farrell--Tate cohomology, we will give a classification of finitely generated $\mathcal{O}_{K,S}[\op{C}_\ell]$-modules which are $\mathcal{O}_{K,S}$-free. For this, we assume that $\mathcal{O}_{K,S}[T]/(\Phi_\ell(T))$ is a Dedekind domain. Note that in this case $\mathcal{O}_{K,S}[T]/(\Phi_\ell(T))\cong\mathcal{O}_{K,S}[\zeta_\ell]$. 
Denote by  $\op{N}=\sum_{g\in\op{C}_\ell}[g]$ the norm element.
\begin{theorem}
\label{thm:fullreiner}
Let $\mathcal{O}_{K,S}$ be a ring of $S$-integers in a number field and let $\ell$ be a prime. Assume that $\mathcal{O}_{K,S}$ is a principal ideal domain and unramified at $\ell$, in particular $\mathcal{O}_{K,S}[T]/(\Phi_\ell(T))$ is a Dedekind domain. Then the isomorphism classes of finitely generated $\mathcal{O}_{K,S}$-free $\mathcal{O}_{K,S}[\op{C}_\ell]$-modules M are parametrized by 
\begin{enumerate}
\item the $\mathcal{O}_{K,S}[\zeta_\ell]$-rank $r$ of $M_{\op{N}} := \{m\in M\mid \op{N}\cdot m=0\}$,
\item the ideal class of the determinant of the $\mathcal{O}_{K,S}[\zeta_\ell]$-module $M_{\op{N}}$,
\item the $\mathcal{O}_{K,S}$-rank $s$ of $M/M_{\op{N}}$,
%\item the $\mathcal{O}_{K,S}/(\ell)$-rank $m$ of the quotient $(\gamma-1)M/(\zeta_\ell-1)M_{\op{N}}$,
\item a $\op{min}(r,s)$-tuple of $\mathcal{O}_{K,S}[\zeta_\ell]^\times$-orbits of elements in $\mathcal{O}_{K,S}[\zeta_\ell]/(\zeta_\ell-1)$ (for the natural multiplication action).
\end{enumerate}
In the above, any non-negative integer %$n\geq 0$ 
is possible in (1) and (3). Any ideal class in (2) is possible, and any choice of tuple of orbits in (4) is possible.
\end{theorem}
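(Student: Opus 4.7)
The plan is to reduce the classification to that of two simpler pieces of $M$ joined by an extension, much as in Reiner's original argument but working relative to $\mathcal{O}_{K,S}$ rather than $\mathbb{Z}$. Let $\sigma \in \op{C}_\ell$ be a generator and set $M_{\op{N}} := \ker(\op{N}|_M)$. The identities $(\sigma-1)\op{N} = 0 = \op{N}(\sigma-1)$ in $\mathcal{O}_{K,S}[\op{C}_\ell]$ show that $(\sigma-1)M \subseteq M_{\op{N}}$, so $\overline{M} := M/M_{\op{N}}$ carries the trivial $\op{C}_\ell$-action, while $M_{\op{N}}$ is annihilated by $\op{N}$ and hence becomes a module over $\mathcal{O}_{K,S}[\op{C}_\ell]/(\op{N}) \cong \mathcal{O}_{K,S}[T]/(\Phi_\ell(T)) \cong \mathcal{O}_{K,S}[\zeta_\ell]$.

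Next I would pin down the two ends. The map $\op{N}\colon M \to M$ factors as $M \twoheadrightarrow \overline{M} \hookrightarrow M$, so $\overline{M}$ is $\mathcal{O}_{K,S}$-torsion-free and, since $\mathcal{O}_{K,S}$ is a PID, free of some rank $s$, yielding invariant (3). The submodule $M_{\op{N}}$ is likewise torsion-free and, being finitely generated over the Dedekind domain $\mathcal{O}_{K,S}[\zeta_\ell]$, is projective. By Steinitz's theorem its isomorphism class is determined by its rank $r$ and its determinant ideal class, which supplies invariants (1) and (2).

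It remains to classify the extension $0 \to M_{\op{N}} \to M \to \overline{M} \to 0$ of $\mathcal{O}_{K,S}[\op{C}_\ell]$-modules up to isomorphism of the middle term. Since $\overline{M} \cong \mathcal{O}_{K,S}^{\oplus s}$ with trivial action, a standard calculation of cyclic group cohomology identifies $\op{Ext}^1_{\mathcal{O}_{K,S}[\op{C}_\ell]}(\overline{M}, M_{\op{N}})$ with $(M_{\op{N}}/(\zeta_\ell-1)M_{\op{N}})^{\oplus s}$, and the projectivity of $M_{\op{N}}$ identifies $M_{\op{N}}/(\zeta_\ell-1)M_{\op{N}}$ with a free module of rank $r$ over $\mathcal{O}_{K,S}[\zeta_\ell]/(\zeta_\ell-1) \cong \mathcal{O}_{K,S}/(\ell)$. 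Thus the extension is encoded by an $r\times s$ matrix over $\mathcal{O}_{K,S}/(\ell)$, and two extensions yield isomorphic middle modules precisely when the matrices agree modulo the two-sided action of $\op{Aut}_{\mathcal{O}_{K,S}[\zeta_\ell]}(M_{\op{N}}) \times \op{GL}_s(\mathcal{O}_{K,S})$ (the first factor acting via its reduction modulo $(\zeta_\ell-1)$). Because $\mathcal{O}_{K,S}/(\ell)$ is a finite product of local principal Artinian rings, Smith normal form applies and reduces every such matrix to a diagonal form of length $\min(r,s)$ whose entries are each determined up to multiplication by an $\mathcal{O}_{K,S}[\zeta_\ell]^\times$-orbit in $\mathcal{O}_{K,S}[\zeta_\ell]/(\zeta_\ell-1)$. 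This produces invariant (4); realizability is then verified by constructing explicit representatives from these data as direct sums of trivial blocks $\mathcal{O}_{K,S}$, a prescribed projective $M_{\op{N}}$, and one ``mixed'' rank-$(r,s)$ block for each entry in the tuple.

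The main obstacle will be step four: verifying that the image of $\op{Aut}_{\mathcal{O}_{K,S}[\zeta_\ell]}(M_{\op{N}})$ in $\op{GL}_r(\mathcal{O}_{K,S}/(\ell))$ is large enough for Smith normal form genuinely to apply, and simultaneously checking that the resulting $\min(r,s)$ diagonal entries really form complete invariants rather than merely coarse ones. The case in which $M_{\op{N}}$ is non-free (determinant class non-trivial) requires a minor twist of the usual Smith normal form argument using the Steinitz decomposition $M_{\op{N}} \cong \mathcal{O}_{K,S}[\zeta_\ell]^{r-1} \oplus I$, but after reduction modulo the prime $(\zeta_\ell-1)$ the ideal $I$ becomes free of rank $1$ over $\mathcal{O}_{K,S}/(\ell)$, which is what makes the final classification clean.
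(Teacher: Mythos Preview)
Your proposal is correct and takes essentially the same approach as the paper: both isolate $M_{\op{N}}$ and $M/M_{\op{N}}$, classify them via Steinitz theory and the PID hypothesis respectively, and then reduce the extension data to a matrix over $\mathcal{O}_{K,S}/(\ell)$ which is diagonalized so that the entries are determined up to global unit orbits. Your phrasing via $\op{Ext}^1$ and Smith normal form is a cleaner repackaging of what the paper does by hand---choosing an $\mathcal{O}_{K,S}$-splitting $M=M_{\op{N}}\oplus X$, writing $(\gamma-1)\colon X\to M_{\op{N}}/(\zeta_\ell-1)M_{\op{N}}$, and ``choosing appropriate bases'' to get $y_j\mapsto c_j\beta_j$---and the obstacle you flag (that the reductions of $\op{Aut}(M_{\op{N}})$ and $\op{GL}_s(\mathcal{O}_{K,S})$ are large enough for diagonalization while leaving exactly the $\mathcal{O}_{K,S}[\zeta_\ell]^\times$-orbit as residual invariant) is precisely the step the paper also handles only implicitly.
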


\begin{proof}
Choose a generator $\gamma$ for the cyclic group $\op{C}_\ell$,  so that $\op{C}_\ell = \langle \gamma \medspace | \medspace \gamma^\ell = 1 \rangle.$
Let $M$ be an $\mathcal{O}_{K,S}$-free $\mathcal{O}_{K,S}[\op{C}_\ell]$-module. 
The set $M_{\op{N}}$ of elements of $M$ annihilated by the norm element $\op{N}$
has a natural module structure over the quotient ring $\mathcal{O}_{K,S}[\op{C}_\ell]/(\op{N})$. The kernel of the natural surjective morphism 
\[
\mathcal{O}_{K,S}[\op{C}_\ell]\to \mathcal{O}_{K,S}[T]/(\Phi_\ell(T))\colon \gamma\mapsto T
\]
is the ideal generated by $\Phi_\ell(\gamma)=\op{N}$. In particular, we get an induced isomorphism 
\[
\mathcal{O}_{K,S}[\op{C}_\ell]/(\op{N})\cong \mathcal{O}_{K,S}[T]/(\Phi_\ell(T))\cong\mathcal{O}_{K,S}[\zeta_\ell].
\]
Since $M$ is $\mathcal{O}_{K,S}$-free, the submodule $M_{\op{N}}$ embeds into a direct sum of copies of $K(\zeta_\ell)$ and hence is finitely generated and torsion-free over $\mathcal{O}_{K,S}[\zeta_\ell]$. By assumption, $\mathcal{O}_{K,S}[\zeta_\ell]$ is a Dedekind ring. In particular, finitely generated torsion-free modules over this ring are projective. The general theory of Dedekind rings and class groups implies that the projective module $M_{\op{N}}$ is of the form $\mathcal{O}_{K,S}[\zeta_\ell]^{r-1}\oplus\mathfrak{a}$ with $r$ the $\mathcal{O}_{K,S}[\zeta_\ell]$-rank of $M_{\rm N}$ and $\mathfrak{a}$ a fractional ideal of $\mathcal{O}_{K,S}[\zeta_\ell]$. The $\mathcal{O}_{K,S}[\zeta_\ell]$-module (and the restricted $\mathcal{O}_{K,S}[\op{C}_\ell]$-module) $M_{\op{N}}$ is completely determined by $r$ and the ideal class of $\mathfrak{a}$. This provides the data in (1) and (2). 

There is an inclusion of $\mathcal{O}_{K,S}[\zeta_\ell]$-modules 
$
M_{\op{N}}\supset (\gamma-1)M\supset (\zeta_\ell-1)M_{\op{N}}
$
(since ${\rm N}\cdot\gamma={\rm N}$ in $\mathcal{O}_{K,S}[{\rm C}_\ell]$,  and $\zeta_\ell=\gamma$ on $M_{\rm N}$). 
% where $\gamma$ denotes the element of $\mathcal{O}_{K,S}[\op{C}_\ell]$ corresponding to a (choice of) generator of $\op{C}_\ell$. 
As noted above, 
\begin{center}$M_{\op{N}}\cong \mathcal{O}_{K,S}[\zeta_\ell]^{r-1}\oplus\mathfrak{a}$ (as $\mathcal{O}_{K,S}[\zeta_\ell]$-modules); \end{center}
and in this module, by standard results on Dedekind rings (as in Reiner's paper), $(\gamma-1)M$ is of the form $I_1\oplus\cdots\oplus I_{r-1}\oplus I_r\mathfrak{a}$ for ideals $(\zeta_\ell-1)\subseteq I_j\subseteq \mathcal{O}_{K,S}[\zeta_\ell]$. Hence the quotient $B:=(\gamma-1)M/(\zeta_\ell-1)M_{\op{N}}$ is a module over the quotient ring $\mathcal{O}_{K,S}[\zeta_\ell]/(\zeta_\ell-1)$, and the latter ring is isomorphic to $\mathcal{O}_{K,S}/(\ell)$ because the same is true over $\mathbb{Z}$. Consequently, the module $(\gamma-1)M$ is determined exactly by an ideal in $\left(\mathcal{O}_{K,S}[\zeta_\ell]/(\zeta_\ell-1)\right)^r\cong \left(\mathcal{O}_{K,S}/(\ell)\right)^r$.
\begin{remark}{In Reiner's work \cite{reiner:1955}, $\mathbb{Z}/(\ell)\cong\mathbb{F}_\ell$ and the ideal is simply determined by an integer $\leq r$. The same is still true whenever the prime $\ell$ is inert in the field extension $K/\mathbb{Q}$.}
\end{remark}

The quotient $M/M_{\op{N}}$ is a finitely generated torsion-free  $\mathcal{O}_{K,S}$-module: by assumption $M$ embeds into a direct sum of copies of $K$ and $K(\zeta_\ell)$ and $M_{\op{N}}$ is the part of $M$ which embeds into the $K(\zeta_\ell)$-copies. Hence $M/M_{\op{N}}$ is projective and the sequence $0\to M_{\op{N}}\to M\to M/M_{\op{N}}\to 0$ splits as $\mathcal{O}_{K,S}$-modules. The module $M$ is $\mathcal{O}_{K,S}$-free by assumption. Therefore, as $\mathcal{O}_{K,S}$-modules, $M_{\op{N}}\cong\mathcal{O}_{K,S}^a\oplus\mathfrak{b}$ and $M/M_{\op{N}}\cong\mathcal{O}_{K,S}^b\oplus\mathfrak{b}^{-1}$ for some fractional $\mathcal{O}_{K,S}$-ideal $\mathfrak{b}$. The module $M/M_{\op{N}}$ is determined (both as $\mathcal{O}_{K,S}$-module and as $\mathcal{O}_{K,S}[\op{C}_\ell]$-module) up to isomorphism by $b$ and the ideal class of $\mathfrak{b}$. Since the ideal $\mathfrak{b}$ is equivalent to the norm of the ideal $\mathfrak{a}$ in the extension $\mathcal{O}_{K,S}[\zeta_\ell]/\mathcal{O}_{K,S}$, its ideal class is determined by the one of $\mathfrak{a}$. This provides the information in (3). 

It remains to identify the $\mathcal{O}_{K,S}[\op{C}_\ell]$-module structure of $M$ in terms of the module structures of $M_{\op{N}}$ and $M/M_{\op{N}}$. The module $M$ is an extension $0\to M_{\op{N}}\to M\to M/M_{\op{N}}\to 0$ where $M_{\op{N}}$ has the $\mathcal{O}_{K,S}[\op{C}_\ell]$-structure induced from the module structure over $\mathcal{O}_{K,S}[\zeta_\ell]$ with a non-trivial $\zeta_\ell$-action, and the structure of $M/M_{\op{N}}$ is induced from $\mathcal{O}_{K,S}$, i.e., has a trivial $\zeta_\ell$-action. We noted above that $M/M_{\op{N}}$ is $\mathcal{O}_{K,S}$-projective, so that the extension splits as $\mathcal{O}_{K,S}$-module. Hence we can, as in Reiner's paper, choose an $\mathcal{O}_{K,S}$-complement $X$ of $M_{\op{N}}$ lifting $M/M_{\op{N}}$ and write $M=M_{\op{N}}\oplus X$. To write down the action of $\gamma$ in this decomposition, we note that $\gamma$ acts as $\zeta_\ell$ on $M_{\op{N}}$ and acts trivially on $M/M_{\op{N}}$. Therefore, $\gamma(x)=x+(\gamma-1)(x)$ for an $\mathcal{O}_{K,S}$-linear map $(\gamma-1)\colon M/M_{\op{N}}\to M_{\op{N}}$. From the decomposition $M=M_{\op{N}}\oplus X$,
\[
(\gamma-1)M=(\zeta_\ell-1)M_{\op{N}}\oplus(\gamma-1)X.
\]
Therefore, the map $(\gamma-1)\colon X\to M_{\op{N}}$ is determined, modulo $(\zeta_\ell-1)M_{\op{N}}$, by an $\mathcal{O}_{K,S}$-linear map $X\to B$. To describe the map $X\to  B$, we recall that $X\cong \mathcal{O}_{K,S}^b\oplus \mathfrak{b}^{-1}$ and $B:=(\gamma-1)M/(\zeta_\ell-1)M_{\rm N}$ is a submodule of $\mathcal{O}_{K,S}/(\ell)^{\oplus r}$. Then $X\to B$ factors through a map $\gamma-1\colon\mathcal{O}_{K,S}/(\ell)^{\oplus(b+1)}\to \mathcal{O}_{K,S}/(\ell)^{\oplus r}$ which surjects onto  $B$ (as quotient of $(\gamma-1)M$) by construction. The ring $\mathcal{O}_{K,S}/(\ell)$ is a finite semisimple ring, %and the quotient modulo the nilradical is semisimple,
a product of finite extensions of $\mathbb{F}_\ell$. We can then, as in Reiner's paper, choose appropriate bases, $y_1,\dots,y_{b+1}$ of $\mathcal{O}_{K,S}/(\ell)^{\oplus(b+1)}$ and $\beta_1,\dots,\beta_r$ of $\mathcal{O}_{K,S}/(\ell)^{\oplus r}$, such that the map $(\gamma-1)$ has the form $y_j\mapsto c_j\beta_j$ for $j=1,\dots,r$ and suitable coefficients $c_j$, and $y_j\mapsto 0$ for $j>r$. This can  be done, starting from any basis, by componentwise elementary transformations over the product of fields, achieving that the representing matrix has the required shape.

It remains to figure out which $\mathcal{O}_{K,S}/(\ell)$-multiples in the above give rise to isomorphic module structures. In Reiner's paper, this is taken care of by \cite{reiner:1955}*{Lemma 4}: over $\mathcal{O}_{K,S}=\mathbb{Z}$ all the coefficients can be taken to be 1. In our more general case, the appropriate replacement of \cite{reiner:1955}*{Lemma 4} is the following statement: for a fractional $\mathcal{O}_{K,S}[\zeta_\ell]$-ideal $\mathfrak{a}$, an element $\beta\in\mathfrak{a}$ and two elements $c_1,c_2\in\mathcal{O}_{K,S}$, the two $\mathcal{O}_{K,S}[\op{C}_\ell]$-module structures on $\mathfrak{a}\oplus\mathcal{O}_{K,S}\cdot y$ given by $y\mapsto y+c_1\beta$ and $y\mapsto y+c_2\beta$, respectively, are isomorphic if and only if there exists a unit $u\in\mathcal{O}_{K,S}[\zeta_\ell]^\times$ such that $uc_1=c_2$. 
In particular, it is in general not possible to have all the coefficients be 1, but coefficients in the same orbit of the unit group give rise to isomorphic actions. The data in (5) for the above action is therefore given by the orbits of the coefficients $c_i\in \mathcal{O}_{K,S}/(\ell)\cong\mathcal{O}_{K,S}[\zeta_\ell]/(\zeta_\ell-1)$ under the multiplication action of the unit group $\mathcal{O}_{K,S}[\zeta_\ell]^\times$. 

The explicit construction of a module with the specified invariants goes through as in \cite{reiner:1955}, showing the realizability of all choices. More precisely, for a given number $r$ and ideal $\mathfrak{a}\subseteq\mathcal{O}_{K,S}[\zeta_\ell]$, we have a module $M_{\rm N}=\mathcal{O}_{K,S}[\zeta_\ell]^{r-1}\oplus\mathfrak{a}$. Then we define $M=M_{\rm N}\oplus \mathcal{O}_{K,S}^{\oplus s}$ with the $\mathcal{O}_{K,S}[{\rm C}_\ell]$-action as follows: on $M_{\rm N}$, $\gamma$ acts via $\zeta_\ell$ in the $\mathcal{O}_{K,S}[\zeta_\ell]$-module structure, and the unipotent part of the action is given by the map $\mathcal{O}_{K,S}^{\oplus s}\to M_{\rm N}$  which maps the generators $y_j$ of $\mathcal{O}_{K,S}^{\oplus s}$ to $y_j+c_j\beta_j$ for $j=1,\dots,r$ and $y_j$ for $j>r$. Here the elements $c_j$ are $\mathcal{O}_{K,S}[\zeta_\ell]$-lifts of the corresponding elements from $\mathcal{O}_{K,S}[\zeta_\ell]/(\zeta_\ell-1)$, and the elements $\beta_j$ are $M_{\rm N}$-lifts of the corresponding elements in $M_{\rm N}/(\zeta_\ell-1)M_{\rm N}$. By construction, this yields an $\mathcal{O}_{K,S}[\zeta_\ell]$-module with the specified invariants.
Namely, the $\ell$-th power of the constructed matrix preserves the filtration of $M$ by $M_{\rm N}$, and acts as identity on both $M_{\rm N}$ and $M/M_{\rm N}$ --- then it is determined uniquely by a morphism $M/M_{\rm N} \to M_{\rm N} \mod (\zeta_\ell-1)$, but that is trivial by construction (the $\ell$-th power of the matrix will already have this morphism landing in the ideal $(\ell)$ ), so the original matrix really must have order $\ell$. 
\end{proof}

\begin{remark}
It should be pointed out that the assumption that $\mathcal{O}_{K,S}[T]/(\Phi_\ell(T))$ is a Dedekind ring is important. What comes up naturally in the first step are finitely generated torsion-free modules. That classification problem is fairly complicated (even in the geometric case for torsion-free sheaves on singular curves only very few very special cases have been addressed in the literature), but over Dedekind rings reduces to the classification of finitely generated locally free modules. 
\end{remark}

We now formulate a very special case of the classification which works under the weaker assumption that $\mathcal{O}_{K,S}[\zeta_\ell]$ is a Dedekind ring but in which $\Phi_\ell(T)$ need not be $K$-irreducible. We restrict to the case where $M_{\op{N}}$ has $\mathcal{O}_{K,S}[\zeta_\ell]$-rank 1. In this case, base-change to $K$ results in one of the irreducible $K$-representations of $\op{C}_\ell$. The $\mathcal{O}_{K,S}[T]/(\Phi_\ell)$-module structure factors through a projection $\mathcal{O}_{K,S}[T]/(\Phi_\ell)\twoheadrightarrow \mathcal{O}_{K,S}[\zeta_\ell]$ and is completely determined by this. Again, the $\mathcal{O}_{K,S}[\op{C}_\ell]$-module structure of $M_{\op{N}}$ is completely determined by a fractional ideal $\mathfrak{a}$ in $\mathcal{O}_{K,S}[\zeta_\ell]$. The rest of the analysis goes through to show the following:

\begin{proposition}
\label{prop:halfreiner}
Let $\mathcal{O}_{K,S}$ be a ring of $S$-integers in a number field $K$ and let $\ell$ be a prime. Assume that $\mathcal{O}_{K,S}$ is a principal ideal domain and that $\mathcal{O}_{K,S}[\zeta_\ell]$ is a Dedekind domain. Then the isomorphism classes of finitely generated $\mathcal{O}_{K,S}$-free $\mathcal{O}_{K,S}[\op{C}_\ell]$-modules $M$ where $M_{\op{N}}$ has $\mathcal{O}_{K,S}[\zeta_\ell]$-rank 1 are parametrized by
\begin{enumerate}
\item the ideal class of the $\mathcal{O}_{K,S}[\zeta_\ell]$-module $M_{\op{N}}$,
\item the $\mathcal{O}_{K,S}$-rank of $M/M_{\op{N}}$,
%\item the $\mathcal{O}_{K,S}/(\ell)$-rank $m$ of the quotient $(\gamma-1)M/(\zeta_\ell-1)M_{\op{N}}$,
\item an $\mathcal{O}_{K,S}[\zeta_\ell]^\times$-orbit of elements in $\mathcal{O}_{K,S}[\zeta_\ell]/(\zeta_\ell-1)$ (for the natural multiplication action).
\end{enumerate}
In the above, any integer $n\geq 0$ is possible in (2). Any ideal class is possible in (1), and any choice of orbit in (3) is possible.
\end{proposition}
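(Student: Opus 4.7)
The plan is to mirror the proof of Theorem~\ref{thm:fullreiner}, making only the modifications forced by the weaker hypothesis on $\mathcal{O}_{K,S}[T]/(\Phi_\ell(T))$. The rank-1 restriction on $M_{\op{N}}$ is precisely what allows us to dispense with the Dedekind property of $\mathcal{O}_{K,S}[T]/(\Phi_\ell(T))$: all of the delicate steps in the proof of Theorem~\ref{thm:fullreiner} that used that hypothesis concerned the classification of finitely generated torsion-free $\mathcal{O}_{K,S}[T]/(\Phi_\ell(T))$-modules, and in the rank-1 case we will be able to reduce to modules over the Dedekind ring $\mathcal{O}_{K,S}[\zeta_\ell]$ directly.

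First I would show that the $\mathcal{O}_{K,S}[T]/(\Phi_\ell(T))$-module structure on $M_{\op{N}}$ factors through one of the projections $\mathcal{O}_{K,S}[T]/(\Phi_\ell(T))\twoheadrightarrow\mathcal{O}_{K,S}[\zeta_\ell]$. Since $M_{\op{N}}$ embeds into $M$ which is $\mathcal{O}_{K,S}$-free, the base change $M_{\op{N}}\otimes_{\mathcal{O}_{K,S}}K$ is an $\mathcal{O}_{K,S}$-torsion-free module over $K[T]/(\Phi_\ell(T))$. The latter is a product of copies of $K(\zeta_\ell)$ indexed by the $K$-factors of $\Phi_\ell$, and the rank-1 hypothesis singles out exactly one factor on which $M_{\op{N}}\otimes K$ is a $1$-dimensional $K(\zeta_\ell)$-vector space, with the remaining orthogonal idempotents acting as zero. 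By $\mathcal{O}_{K,S}$-torsion-freeness, the product of the other $K$-irreducible factors of $\Phi_\ell(T)$ already annihilates $M_{\op{N}}$ itself, which gives the required factorization through $\mathcal{O}_{K,S}[\zeta_\ell]$.

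Once this reduction is in place, $M_{\op{N}}$ is a finitely generated torsion-free rank-1 module over the Dedekind domain $\mathcal{O}_{K,S}[\zeta_\ell]$, hence isomorphic to a fractional ideal $\mathfrak{a}$ whose class is the invariant (1). The remainder of the proof now transfers verbatim from Theorem~\ref{thm:fullreiner}: the quotient $M/M_{\op{N}}$ is $\mathcal{O}_{K,S}$-projective (as $\mathcal{O}_{K,S}$-torsion-free and finitely generated over a PID), hence free, giving the rank invariant (2); the sequence $0\to M_{\op{N}}\to M\to M/M_{\op{N}}\to 0$ splits as $\mathcal{O}_{K,S}$-modules, and the $\gamma$-action is encoded by an $\mathcal{O}_{K,S}$-linear map $(\gamma-1)\colon X\to M_{\op{N}}$ well-defined modulo $(\zeta_\ell-1)M_{\op{N}}$, i.e.\ by an element of $B=M_{\op{N}}/(\zeta_\ell-1)M_{\op{N}}$ which is a module over $\mathcal{O}_{K,S}[\zeta_\ell]/(\zeta_\ell-1)\cong\mathcal{O}_{K,S}/(\ell)$. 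Because $M_{\op{N}}$ has $\mathcal{O}_{K,S}[\zeta_\ell]$-rank $1$, the analogue of \cite{reiner:1955}*{Lemma 4} used in the proof of Theorem~\ref{thm:fullreiner} collapses the datum to a single $\mathcal{O}_{K,S}[\zeta_\ell]^\times$-orbit in $\mathcal{O}_{K,S}[\zeta_\ell]/(\zeta_\ell-1)$, giving invariant (3). Realizability of arbitrary choices of $\mathfrak{a}$, $n$ and orbit is obtained by the same explicit construction as in \cite{reiner:1955}, specialized to $r=1$.

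The main obstacle is the factorization argument in the second paragraph: the hypothesis on $\mathcal{O}_{K,S}[\zeta_\ell]$ does not directly control modules over $\mathcal{O}_{K,S}[T]/(\Phi_\ell(T))$, so one has to extract the factorization from the combination of the rank-1 condition and the $\mathcal{O}_{K,S}$-torsion-freeness inherited from $M$. Once this is done, the delicate Dedekind-based steps of Theorem~\ref{thm:fullreiner} all take place entirely inside $\mathcal{O}_{K,S}[\zeta_\ell]$, for which the hypothesis is exactly the one assumed.
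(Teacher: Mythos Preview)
Your proposal is correct and follows the same route as the paper. The paper's discussion preceding the proposition makes exactly your factorization argument (base-change to $K$ picks out a single irreducible $K$-representation, hence the $\mathcal{O}_{K,S}[T]/(\Phi_\ell)$-module structure on $M_{\op{N}}$ factors through one projection to $\mathcal{O}_{K,S}[\zeta_\ell]$), and then asserts that the rest of the proof of Theorem~\ref{thm:fullreiner} goes through; you have simply supplied the torsion-freeness justification that the paper leaves implicit.
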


\begin{remark}
Pulling back modules along the two projections  
\[
\mathcal{O}_{\mathbb{Q}(\sqrt{-7})}[T]/(\Phi_7(T))\to \mathcal{O}_{\mathbb{Q}(\zeta_7)}
\]
results  in  non-isomorphic modules which correspond to non-isomorphic $\mathbb{Q}(\sqrt{-7})$-representations of $\op{C}_7$. However, this effectively only amounts to different choices of generators of conjugate subgroups. If we are only interested in counting subgroups, this does not affect the end result.
\end{remark}

\section{Centralizers and normalizers}
\label{sec:centralizer}

We now need to describe centralizers and normalizers of the corresponding $\op{C}_\ell$-subgroups of $\op{GL}_n(\mathcal{O}_{K,S})$. For the purpose of this section, fix a subgroup $\iota\colon \op{C}_\ell\hookrightarrow \op{GL}_n(\mathcal{O}_{K,S})$ and the corresponding $\mathcal{O}_{K,S}[\op{C}_\ell]$-module $M$. Since our intended application is to essential rank one cases, most notably $\op{GL}_3(\mathcal{O}_{K,S})$, we assume throughout the section that the associated module $M$ is such that its associated representation over $K$ is of the form $K\times K(\zeta_\ell)$. We also assume throughout this section that the conditions of Proposition~\ref{prop:halfreiner} are satisfied.% $\mathcal{O}_{K,S}[\zeta_\ell]$ is a Dedekind domain.

First, we can embed $\op{GL}_n(\mathcal{O}_{K,S})\hookrightarrow \op{GL}_n(K)$. The centralizer of $\op{C}_\ell\hookrightarrow \op{GL}_n(K)$ is the automorphism group of the representation $M\otimes_{\mathcal{O}_{K,S}} K\cong K\times K(\zeta_\ell)$ of $\op{C}_\ell$ over $K$.
Under our assumption $\zeta_\ell\not\in K$, the $\op{C}_\ell$-representation $K(\zeta_\ell)$ is $K$-irreducible. In particular, 
\[
\op{Hom}_{K[\op{C}_\ell]}(K(\zeta_\ell),K)\cong \op{Hom}_{K[\op{C}_\ell]}(K,K(\zeta_\ell))\cong 0.
\]
From this, any $K[\op{C}_\ell]$-automorphism $\phi$ of $K\times K(\zeta_\ell)$ must be of the form $\phi_K\times \phi_{K(\zeta_\ell)}$ where $\phi_K$ and $\phi_{K(\zeta_\ell)}$ are $K[\op{C}_\ell]$-automorphisms of $K$ and $K(\zeta_\ell)$, respectively. Via the embedding $\op{GL}_n(\mathcal{O}_{K,S})\hookrightarrow \op{GL}_n(K)$, the same must be true for automorphisms of the $\mathcal{O}_{K,S}[\op{C}_\ell]$-modules. In terms of the centralizer as a subgroup of $\op{GL}_n(\mathcal{O}_{K,S})$, this means that the centralizer must be conjugate to a block-diagonal matrix. 
For the normalizer, similar statements apply. The only additional elements in the normalizer would come from $K$-linear automorphisms of $K(\zeta_\ell)$ which are accounted for by the Galois group $\op{Gal}(K(\zeta_\ell)/K)$. 

Now we need some induction-type theorems to determine the automorphism groups of the individual almost-direct summands of the module $M$.

\begin{lemma}
\label{lem:induction1}
Let $M$ be an $\mathcal{O}_{K,S}[\op{C}_\ell]$-module such that multiplication with the norm element $\op{N}$ is the zero map and assume that the $\mathcal{O}_{K,S}[\zeta_\ell]$-rank of $M$ is $1$. Then 
\[
\op{Aut}_{\mathcal{O}_{K,S}[\op{C}_\ell]}(M)\cong \op{Aut}_{\mathcal{O}_{K,S}[\zeta_\ell]}(M)\cong \mathcal{O}_{K,S}[\zeta_\ell]^\times. 
\]
\end{lemma}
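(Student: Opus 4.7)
The plan is to split the lemma into two isomorphisms and handle them separately. For the first isomorphism $\op{Aut}_{\mathcal{O}_{K,S}[\op{C}_\ell]}(M) \cong \op{Aut}_{\mathcal{O}_{K,S}[\zeta_\ell]}(M)$, the hypothesis $\op{N}\cdot M=0$ forces the group-ring action to factor through the quotient $\mathcal{O}_{K,S}[\op{C}_\ell]/(\op{N}) \cong \mathcal{O}_{K,S}[T]/(\Phi_\ell(T))$ identified in the proof of Theorem~\ref{thm:fullreiner}. The rank-one assumption then allows me to invoke the observation preceding Proposition~\ref{prop:halfreiner} to further factor the action through one of the projections $\mathcal{O}_{K,S}[T]/(\Phi_\ell(T)) \twoheadrightarrow \mathcal{O}_{K,S}[\zeta_\ell]$. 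Once $M$ is promoted to an honest $\mathcal{O}_{K,S}[\zeta_\ell]$-module in this way, commuting with a generator $\gamma\in\op{C}_\ell$ is the same as commuting with $\zeta_\ell$, so an $\mathcal{O}_{K,S}[\op{C}_\ell]$-linear endomorphism is automatically $\mathcal{O}_{K,S}[\zeta_\ell]$-linear, and vice versa.

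For the second isomorphism, I would exploit the $\mathcal{O}_{K,S}$-freeness built into the standing hypotheses of the section: it makes $M$ torsion-free over $\mathcal{O}_{K,S}[\zeta_\ell]$, hence, being finitely generated of rank one over the Dedekind domain $\mathcal{O}_{K,S}[\zeta_\ell]$, isomorphic to an invertible fractional ideal $\mathfrak{a} \subset K(\zeta_\ell)$. I would then compute $\op{End}_{\mathcal{O}_{K,S}[\zeta_\ell]}(\mathfrak{a})$ inside $K(\zeta_\ell)$ as $\{x \in K(\zeta_\ell) \mid x\mathfrak{a}\subseteq \mathfrak{a}\}$; a standard localization at each maximal ideal of $\mathcal{O}_{K,S}[\zeta_\ell]$, where $\mathfrak{a}$ becomes free of rank one, identifies this with $\mathcal{O}_{K,S}[\zeta_\ell]$. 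Passing to units yields $\op{Aut}_{\mathcal{O}_{K,S}[\zeta_\ell]}(M) \cong \mathcal{O}_{K,S}[\zeta_\ell]^\times$.

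The only delicate point is the factoring-through-one-projection step when $\Phi_\ell(T)$ is $K$-reducible, as in Example~\ref{ex:2factors}: on a rank-one $\mathcal{O}_{K,S}[\zeta_\ell]$-module the complementary cyclotomic factor of $\Phi_\ell(T)$ must act as zero. This is immediate after base change to $K$, since $M\otimes_{\mathcal{O}_{K,S}}K$ is then a single copy of $K(\zeta_\ell)$, and the integral statement follows from the embedding $M \hookrightarrow M\otimes_{\mathcal{O}_{K,S}} K$ given by $\mathcal{O}_{K,S}$-freeness. Everything else is routine given the results already established in the paper.
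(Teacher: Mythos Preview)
Your proof is correct and follows essentially the same route as the paper's: factor the $\mathcal{O}_{K,S}[\op{C}_\ell]$-action through the quotient by $\op{N}$ to identify the two automorphism groups, then use that a rank-one projective module over the Dedekind domain $\mathcal{O}_{K,S}[\zeta_\ell]$ has automorphism group equal to the unit group (the paper phrases this last step as ``local units can be patched to global units,'' which is precisely your localization computation of $\{x\in K(\zeta_\ell)\mid x\mathfrak{a}\subseteq\mathfrak{a}\}$). Your handling of the $K$-reducible case of $\Phi_\ell$, where you explicitly argue that the action factors through a single projection $\mathcal{O}_{K,S}[T]/(\Phi_\ell(T))\twoheadrightarrow\mathcal{O}_{K,S}[\zeta_\ell]$, is in fact more careful than the paper's proof, which simply writes $\mathcal{O}_{K,S}[\op{C}_\ell]/(\op{N})\cong\mathcal{O}_{K,S}[\zeta_\ell]$ without comment.
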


\begin{proof}
Since the norm element $\op{N}$ annihilates $M$, it has an induced module structure for 
\[
\mathcal{O}_{K,S}[\op{C}_\ell]/(\op{N})\cong \mathcal{O}_{K,S}[\zeta_\ell].
\]
This yields a homomorphism $\op{Aut}_{\mathcal{O}_{K,S}[\op{C}_\ell]}(M)\to\op{Aut}_{\mathcal{O}_{K,S}[\zeta_\ell]}(M)$. This homomorphism is injective, since both automorphism groups embed into $\op{Aut}_{\mathcal{O}_{K,S}}(M)$. The natural restriction map along the homomorphism $\mathcal{O}_{K,S}[\op{C}_\ell]\to\mathcal{O}_{K,S}[\zeta_\ell]$ provides an inverse, establishing the first isomorphism. 

For the second isomorphism, we know that $M$ is a finitely generated projective $\mathcal{O}_{K,S}[\zeta_\ell]$-module, and our additional assumption is that its rank is $1$. Since local units can be patched to global units, the automorphism group of a finitely generated projective $\mathcal{O}_{K,S}[\zeta_\ell]$-module of rank $1$ is isomorphic to $\mathcal{O}_{K,S}[\zeta_\ell]^\times$. 
\end{proof}

\begin{lemma}
\label{lem:induction2} 
Let $M$ be an $\mathcal{O}_{K,S}[\op{C}_\ell]$-module such that multiplication with the norm element $\op{N}$ is injective and $M$ is a finitely generated projective $\mathcal{O}_{K,S}$-module of rank~$1$. Then 
\[
\op{Aut}_{\mathcal{O}_{K,S}[\op{C}_\ell]}(M)\cong \op{Aut}_{\mathcal{O}_{K,S}}(M) \cong \mathcal{O}_{K,S}^\times. 
\]
\end{lemma}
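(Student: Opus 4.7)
The plan is to reduce the statement to a triviality by first showing that the $\op{C}_\ell$-action on $M$ is forced to be trivial under the given hypotheses. Once that is established, the $\mathcal{O}_{K,S}[\op{C}_\ell]$-module structure coincides with the $\mathcal{O}_{K,S}$-module structure, so the first isomorphism is immediate, and the second reduces to the standard fact that an invertible module over a Dedekind domain has $\mathcal{O}_{K,S}^\times$ as its group of scalar automorphisms.

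For the key step, I would use that $M$ is projective of rank $1$ over the Dedekind domain $\mathcal{O}_{K,S}$, so we may write $M\cong\mathfrak{a}$ for a fractional ideal $\mathfrak{a}\subset K$. Any $\mathcal{O}_{K,S}$-linear endomorphism of $\mathfrak{a}$ is multiplication by an element of $\{x\in K : x\mathfrak{a}\subseteq\mathfrak{a}\}=\mathcal{O}_{K,S}$, which in particular gives the second isomorphism $\op{Aut}_{\mathcal{O}_{K,S}}(M)\cong\mathcal{O}_{K,S}^\times$. Applying this to the action of $\op{C}_\ell$: a generator $\gamma\in\op{C}_\ell$ acts on $M$ as multiplication by some $\zeta\in\mathcal{O}_{K,S}^\times$ satisfying $\zeta^\ell=1$. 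The norm element then acts as multiplication by $\sum_{i=0}^{\ell-1}\zeta^i$. If $\zeta\neq 1$, this sum equals $(\zeta^\ell-1)/(\zeta-1)=0$, contradicting the hypothesis that $\op{N}$ acts injectively on $M$. Hence $\zeta=1$, and the $\op{C}_\ell$-action on $M$ is trivial. (Note that with $\zeta=1$, the norm acts as multiplication by $\ell$, which is indeed injective on a torsion-free module.)

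With the action trivial, every $\mathcal{O}_{K,S}$-linear automorphism of $M$ automatically commutes with the $\op{C}_\ell$-action, giving $\op{Aut}_{\mathcal{O}_{K,S}[\op{C}_\ell]}(M)=\op{Aut}_{\mathcal{O}_{K,S}}(M)$, and the chain of isomorphisms is complete. There is no real obstacle here: the only delicate point is the scalar-action argument, which rests on the Dedekind structure of $\mathcal{O}_{K,S}$ and the rank-$1$ hypothesis on $M$; it is essentially the same mechanism that forces $\op{End}_{\mathcal{O}_{K,S}}(M)\cong\mathcal{O}_{K,S}$ for any invertible module.
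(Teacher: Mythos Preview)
Your proof is correct and follows essentially the same approach as the paper: show that injectivity of $\op{N}$ forces the $\op{C}_\ell$-action to be trivial, then identify both automorphism groups with $\mathcal{O}_{K,S}^\times$. In fact you spell out the step the paper leaves implicit---namely, that the generator acts by a unit $\zeta$ with $\zeta^\ell=1$, and $\zeta\neq 1$ would make $\op{N}$ act as zero---and you compute $\op{Aut}_{\mathcal{O}_{K,S}}(M)$ via the fractional-ideal description rather than by patching local units, but these are equivalent standard arguments.
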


\begin{proof}
Injectivity of multiplication with the norm implies that the action of $\op{C}_\ell$ is trivial. The second isomorphism $\op{Aut}_{\mathcal{O}_{K,S}}(M)\cong \mathcal{O}_{K,S}^\times$ follows as before, by patching local units. 

An $\mathcal{O}_{K,S}[\op{C}_\ell]$-automorphism of $M$ is in particular an $\mathcal{O}_{K,S}$-automorphism, giving rise to an injective restriction map $\op{Aut}_{\mathcal{O}_{K,S}[\op{C}_\ell]}(M)\to \op{Aut}_{\mathcal{O}_{K,S}}(M)$. Since any $\mathcal{O}_{K,S}$-automorphism of $M$ automatically commutes with the trivial $\op{C}_\ell$-action we get the first isomorphism. 
\end{proof}

We can now analyse the structure of $\op{Aut}_{\mathcal{O}_{K,S}[\op{C}_\ell]}(M)$ where  $M$ is a module corresponding to a $\op{C}_\ell$-subgroup of $\op{GL}_3(\mathcal{O}_{K,S})$. Any $\mathcal{O}_{K,S}[\op{C}_\ell]$-automorphism $\phi$ of $M$ necessarily maps the submodule $M_{\op{N}}$ to itself and hence induces automorphisms $\phi_{\op{N}}$ and $\overline{\phi}$ of $M_{\op{N}}$ and $M/M_{\op{N}}$, respectively. By Lemmas~\ref{lem:induction1} and \ref{lem:induction2}, we have an induced morphism 
\[
\op{Aut}_{\mathcal{O}_{K,S}[\op{C}_\ell]}(M)\to \mathcal{O}_{K,S}[\op{C}_\ell]^\times\times \mathcal{O}_{K,S}^\times.
\]
For the split module (corresponding to the orbit of $0$ in $\mathcal{O}_{K,S}[\zeta_\ell]/(\zeta_\ell-1)$ as described in Theorem~\ref{thm:fullreiner} resp. Proposition~\ref{prop:halfreiner}), this actually describes the full centralizer. For a non-split module where there is an additional  unipotent action (corresponding to the orbit of a non-zero element $c\in \mathcal{O}_{K,S}[\zeta_\ell]/(\zeta_\ell-1)$), we have morphisms $\mathcal{O}_{K,S}[\zeta_\ell]\to \mathcal{O}_{K,S}/(\ell)$ and $\mathcal{O}_{K,S}\to \mathcal{O}_{K,S}/(\ell)$ given by reduction mod $\zeta_\ell-1$ and reduction mod $\ell$, respectively. These ring homomorphisms induce maps on the unit groups. 

\begin{lemma}
\label{lem:fiber}
Assume $M$ is the $\mathcal{O}_{K,S}[\op{C}_\ell]$-module associated to a $\op{C}_\ell$-subgroup of $\op{GL}_3(\mathcal{O}_{K,S})$ where $[K(\zeta_\ell):K]=2$. Denoting by $c\in\mathcal{O}_{K,S}[\zeta_\ell]/(\zeta_\ell-1)$ an element defining the unipotent action on $M$, we set
\[
\mathcal{O}_{K,S}[\op{C}_\ell]^\times\times^c_{\op{End}(\mathcal{O}_{K,S}/(\ell))} \mathcal{O}_{K,S}^\times=\left\{(\phi,\psi)\mid \overline{\phi} c=c\overline{\psi} \textrm{ in }\op{End}(\mathcal{O}_{K,S}/(\ell))\right\}.
\]
Then the induced morphism from the automorphism group above factors through an isomorphism
\[
\op{Aut}_{\mathcal{O}_{K,S}[\op{C}_\ell]}(M)\to \mathcal{O}_{K,S}[\op{C}_\ell]^\times\times^c_{\op{End}(\mathcal{O}_{K,S}/(\ell))} \mathcal{O}_{K,S}^\times.
\]
\end{lemma}

\begin{proof}
It remains to identify the image of the induced morphism. Let 
\[
(\phi,\psi)\in \mathcal{O}_{K,S}[\op{C}_\ell]^\times\times^c_{\op{End}(\mathcal{O}_{K,S}/(\ell))} \mathcal{O}_{K,S}^\times.
\]
To set up notation, let $M=\mathfrak{a}\oplus\op{Nm}\mathfrak{a}^{-1}$ (but $\op{Nm}\mathfrak{a}^{-1}$ is free since $\mathcal{O}_{K,S}$ is assumed to be a principal ideal domain). The action of ${\rm C}_\ell$ is specified as in Reiner's results: it sends a generator $y$ to $(\beta,y)$ where $\beta\in\mathfrak{a}$ is a choice of preimage of an element $c\in\mathcal{O}_{K,S}[\zeta_\ell]/(\zeta_\ell-1)$. Formulated differently, the action on $x\in\op{Nm}\mathfrak{a}^{-1}$ adds a specific choice of lift $\tilde{\overline{x}}\in\mathfrak{a}$ of the product  $c\overline{x}$ of the reduction of $x$ mod $\ell$ and the coefficient $c$; for notational purposes, we denote this lift $\tilde{\overline{x}}$ by $\beta(x)$. 

Now we want to determine when the action described above commutes with the automorphism $(\phi,\psi)$. If we first apply the action and then the automorphism, then we get $\phi(\beta(y))$ in the component $\mathfrak{a}$. If, on the other hand, we first apply the automorphism and then the action, we get $\beta(\psi(y))$ in the component $\mathfrak{a}$. For $\phi(\beta(y))=\beta(\psi(y))$, it is necessary and sufficient that the reductions of $\phi$ and $\psi$ to $\mathcal{O}_{K,S}/(\ell)$ satisfy $\overline{\phi}(c\cdot \overline{y})=c\cdot\overline{\psi}(\overline{y})$. This is precisely the claim.
\end{proof}

\begin{remark}
Since $\overline{\phi}$ and $\overline{\psi}$ are given by multiplication with units in $\mathcal{O}_{K,S}/(\ell)$ (the reductions of the respective units in the rings of integers), they are $\mathcal{O}_{K,S}/(\ell)$-linear. In particular, the requirement translates to $c(\overline{\phi}-\overline{\psi})$, and this is always satisfied if the reductions of $\phi$ and $\psi$ are the same. However, if $c$ is a zero-divisor, the requirement $c(\overline{\phi}-\overline{\psi})$ is strictly weaker; it is no requirement at all for $c=0$, which recovers the product description of the automorphism group for the split module. 
\end{remark}

\begin{lemma}
\label{lem:semidirect}
Assume $M$ is the $\mathcal{O}_{K,S}[\op{C}_\ell]$-module associated to a $\op{C}_\ell$-subgroup of $\op{GL}_3(\mathcal{O}_{K,S})$ with $[K(\zeta_\ell):K]=2$. In particular, $M\cong\mathfrak{a}\oplus\op{Nm}\mathfrak{a}^{-1}$ for an ideal class $\mathfrak{a}$ of $\mathcal{O}_{K,S}[\zeta_\ell]$. The group of special semilinear automorphisms of $M$ is of the form
\[
\left(\op{Aut}_{\mathcal{O}_{K,S}[\op{C}_\ell]}(M)\right)\rtimes \op{Stab}(\mathfrak{a},\op{Gal}(K(\zeta_\ell)/K)).
\]
The action is the natural Galois action on the automorphism group, viewed as fiber product of unit groups as in Lemma~\ref{lem:fiber}.
\end{lemma}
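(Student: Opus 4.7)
The plan is to build the semidirect product decomposition from a short exact sequence obtained by restricting semilinear automorphisms to $M_{\op{N}}$. First I would record how special automorphisms interact with the $\mathcal{O}_{K,S}[\zeta_\ell]$-structure on $M_{\op{N}}$. A special automorphism $\phi\colon \gamma\mapsto\gamma^a$ of $\mathcal{O}_{K,S}[\op{C}_\ell]$ descends under the identification $\mathcal{O}_{K,S}[\op{C}_\ell]/(\op{N})\cong\mathcal{O}_{K,S}[T]/(\Phi_\ell(T))$ to an automorphism that permutes the $K$-factors of $\Phi_\ell(T)$. Since $M_{\op{N}}$ has its $\mathcal{O}_{K,S}[\zeta_\ell]$-structure via one fixed factor, any $\phi$-semilinear automorphism of $M$ restricts to a semilinear isomorphism of $M_{\op{N}}$, which forces $\phi$ to preserve that factor. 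The stabilizer inside $(\mathbb{Z}/\ell)^\times$ of the chosen factor is naturally identified with $\op{Gal}(K(\zeta_\ell)/K)$, which has order $2$ by hypothesis.

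Restricting to these ``good'' special automorphisms, I would consider the short exact sequence
$$
1\to \op{Aut}_{\mathcal{O}_{K,S}[\op{C}_\ell]}(M)\to \op{SemAut}(M)\to G\to 1,
$$
where $G\subseteq\op{Gal}(K(\zeta_\ell)/K)$ is the image. A $\sigma$-semilinear automorphism yields an $\mathcal{O}_{K,S}[\zeta_\ell]$-linear isomorphism $M_{\op{N}}\cong \sigma^{*}M_{\op{N}}$, forcing $\sigma$ to stabilize the ideal class $[\mathfrak{a}]$; hence $G\subseteq \op{Stab}(\mathfrak{a},\op{Gal}(K(\zeta_\ell)/K))$.

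For the reverse inclusion and the splitting, I would construct an explicit lift of each $\sigma$ in the stabilizer. Choose an $\mathcal{O}_{K,S}[\zeta_\ell]$-linear isomorphism $g_{\op{N}}\colon\mathfrak{a}\xrightarrow{\sim} \sigma(\mathfrak{a})$, interpret it as a $\sigma$-semilinear endomorphism of $M_{\op{N}}$, and extend by the identity on the $\mathcal{O}_{K,S}$-complement $X$ chosen in the proof of \prettyref{thm:fullreiner}. The only nontrivial compatibility check is with the unipotent term $y\mapsto y+\beta$ of Reiner's normal form; since $\sigma$ fixes the quotient $\mathcal{O}_{K,S}[\zeta_\ell]/(\zeta_\ell-1)\cong\mathcal{O}_{K,S}/(\ell)$ in which the extension coefficient $c$ lives, one can rescale $g_{\op{N}}$ by a unit in $\mathcal{O}_{K,S}[\zeta_\ell]^\times$ so that the extension class is preserved. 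Because $\op{Stab}(\mathfrak{a},\op{Gal})$ has order at most $2$, it is enough to lift the non-trivial element, and any residual squaring discrepancy lives in the centralizer and can be absorbed by a further modification of the lift, giving an honest group-theoretic section.

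Finally, I would identify the conjugation action of the section on the centralizer, viewed through \prettyref{lem:fiber} as the fiber product $\mathcal{O}_{K,S}[\zeta_\ell]^\times\times^c_{\op{End}(\mathcal{O}_{K,S}/(\ell))}\mathcal{O}_{K,S}^\times$. Since the section acts as $\sigma$ on the $\mathcal{O}_{K,S}[\zeta_\ell]$-factor and trivially on the $\mathcal{O}_{K,S}$-factor, a direct computation shows the induced action is the coordinate-wise Galois action, and the fiber product condition $c(\overline{\phi}-\overline{\psi})=0$ is preserved because $\sigma$ acts trivially on $\mathcal{O}_{K,S}/(\ell)$. The main obstacle I anticipate is the verification that the constructed lift is a genuine group homomorphism rather than just a set-theoretic section; this is essentially the question of vanishing of a class in $H^1(\op{Gal},\mathcal{O}_{K,S}[\zeta_\ell]^\times)$, which I would sidestep using that $\op{Stab}(\mathfrak{a},\op{Gal})$ has order at most $2$ and that modifying the lift by a centralizer element suffices to kill any order-two discrepancy.
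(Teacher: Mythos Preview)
Your proposal is correct and follows essentially the same approach as the paper. The paper's proof is extremely brief (three sentences): it embeds into $K[\op{C}_\ell]$, invokes the discussion at the start of Section~\ref{sec:centralizer} that over $K$ the extra semilinear automorphisms are precisely the Galois action of $\op{Gal}(K(\zeta_\ell)/K)$, and then observes that an element of the Galois group gives an integral semilinear automorphism of $M$ exactly when it stabilizes the ideal class $\mathfrak{a}$.

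Your version supplies considerably more detail than the paper does, in particular the explicit construction of a section and the verification that the induced action is the natural Galois action on the fiber product of unit groups. The paper takes the splitting essentially for granted, relying on the rational picture established earlier in the section; you instead work integrally throughout and build the section by hand. Both routes are valid. Your worry about the $H^1$ obstruction to lifting the section homomorphically is legitimate in principle, but note that the paper sidesteps it entirely: since over $K$ the Galois group already sits inside $\op{GL}_n(K)$ as honest $K$-linear automorphisms of $K\times K(\zeta_\ell)$, one has a canonical section on the rational level, and the only question is whether it preserves the integral lattice, which is exactly the stabilizer condition on $\mathfrak{a}$.
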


\begin{proof}
By embedding $\mathcal{O}_{K,S}[\op{C}_\ell]\hookrightarrow K[\op{C}_\ell]$, we already know that the only semilinear automorphisms that are not in the automorphism group come from the Galois-action of $\op{Gal}(K(\zeta_\ell)/K)$. However, the Galois group does not need to stabilize the isomorphism class of the module; this happens whenever we have a non-trivial Galois action on the class group of $K(\zeta_\ell)$. The semilinear automorphisms modulo the linear ones are exactly identified with the stabilizer of the ideal class $\mathfrak{a}$ in the Galois group, as claimed. 
\end{proof}

\section{Sample cases} 
\label{sec:examples}

Now we discuss a couple of sample cases to compare them to the computer calculations as sanity check. %Actually, the following examples can be generalized to computations of Farrell--Tate cohomology of groups $\op{PGL}_3(\mathcal{O}_{K,S})$ provided $\zeta_\ell\not\in K$ and $\mathcal{O}_{K,S}[\zeta_\ell]$ is a Dedekind ring. 

\subsection{Homological 3-torsion in \texorpdfstring{$\op{PGL}_3$}{PGL3} over quadratic imaginary integers}

Let $m$ be a square-free natural number with $3\nmid m$ and denote by $\mathcal{O}_{-m}=\mathcal{O}_{\mathbb{Q}(\sqrt{-m})}$. In the case where $\mathcal{O}_{-m}$ is a principal ideal domain, using Corollary~\ref{cor:conjiso} combined with Proposition~\ref{prop:halfreiner}, the conjugacy classes of embeddings $\op{C}_3\hookrightarrow\op{PGL}_3(\mathcal{O}_{-m})$ are parametrized by pairs of elements $(\mathfrak{a},c)$ where $\mathfrak{a}$ is an ideal class in $\mathcal{O}_{-m}[\zeta_3]$ and $c$ is an $\mathcal{O}_{-m}[\zeta_3]^\times$-orbit of elements in $\mathcal{O}_{-m}[\zeta_3]/(\zeta_3-1)$. Consequently, the number of conjugacy classes of subgroups is 
\[
% \#\left(\op{CL}_{\mathbb{Q}(\sqrt{-m},\sqrt{-3})}/\op{Gal}(\mathbb{Q}(\sqrt{-m},\sqrt{-3})/\mathbb{Q}(\sqrt{-m}))\right)\cdot\# \left((\mathcal{O}_{-m}[\zeta_3]/(\zeta_3-\ell))/\mathcal{O}_{-m}[\zeta_3]^\times\right)
\frac{\#\op{Cl}_{\mathbb{Q}(\sqrt{-m},\sqrt{-3})}}{\#\op{Gal}(\mathbb{Q}(\sqrt{-m},\sqrt{-3})/\mathbb{Q}(\sqrt{-m}))}\cdot\# \left((\mathcal{O}_{-m}[\zeta_3]/(\zeta_3-1))/\mathcal{O}_{-m}[\zeta_3]^\times\right)
\]
Here ${\#\op{Gal}(\mathbb{Q}(\sqrt{-m},\sqrt{-3})/\mathbb{Q}(\sqrt{-m}))} =2$, and it remains to determine the orbit set for the natural action of $\mathcal{O}_{-m}[\zeta_3]^\times$ on $\mathcal{O}_{-m}[\zeta_3]/(\zeta_3-1)$. To do this, we first note that our assumption $3\nmid m$ implies
\[
\mathcal{O}_{\mathbb{Q}(\sqrt{-m})}/(3)\cong \mathbb{F}_3[X]/(X^2+m)\cong \left\{ \begin{array}{ll}
\mathbb{F}_9 & m\equiv 1\bmod 3\\
\mathbb{F}_3\times\mathbb{F}_3 & m\equiv 2\bmod 3
\end{array}\right. 
\]
We make a case distinction, depending on the residue class of $m$ mod 3. These arguments actually do not require $\mathcal{O}_{-m}$ to be a principal ideal domain. We do them in this generality: for any $m$ coprime to $3$, the numbers of orbits below give the numbers of conjugacy classes of order-3-subgroups corresponding to the trivial ideal class. 

\begin{proposition}
Assume $m\equiv 2\bmod 3$. If the reduction morphism $\mathcal{O}_{-m}[\zeta_3]^\times\to(\mathbb{F}_3\times\mathbb{F}_3)^\times$ is surjective, then there are four $\mathcal{O}_{-m}[\zeta_3]^\times$-orbits on $\mathcal{O}_{-m}[\zeta_3]/(\zeta_3-1)$. Otherwise, the reduction morphism has image $\{(1,1),(-1,-1)\}$ and there are five orbits. 
\end{proposition}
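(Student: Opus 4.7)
The plan is to translate the orbit-counting claim into an elementary calculation on the finite ring $\mathbb{F}_3\times\mathbb{F}_3$ and its unit group. First I would invoke the isomorphism $\mathcal{O}_{-m}[\zeta_3]/(\zeta_3-1)\cong \mathcal{O}_{-m}/(3)$ from the proof of Theorem~\ref{thm:fullreiner}: reducing $\mathcal{O}_{-m}[T]/(\Phi_3(T))$ further modulo $T-1$ gives $\mathcal{O}_{-m}/(\Phi_3(1))=\mathcal{O}_{-m}/(3)$. The hypothesis $m\equiv 2\bmod 3$ combined with the case distinction recalled just before the proposition then yields $\mathcal{O}_{-m}/(3)\cong \mathbb{F}_3\times\mathbb{F}_3$, and the $\mathcal{O}_{-m}[\zeta_3]^\times$-action factors through the reduction homomorphism
\[
\mathcal{O}_{-m}[\zeta_3]^\times \longrightarrow (\mathbb{F}_3\times\mathbb{F}_3)^\times\cong (\mathbb{Z}/2)^2.
\]

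Next, I would pin down which subgroups of $(\mathbb{Z}/2)^2$ can occur as the image. Since $\zeta_3\equiv 1\pmod{\zeta_3-1}$, every unit of the form $\pm\zeta_3^i$ maps to $\pm 1\in \mathcal{O}_{-m}/(3)$, which under our splitting corresponds to an element of the diagonal subgroup $\{(1,1),(-1,-1)\}$ of $(\mathbb{F}_3\times\mathbb{F}_3)^\times$. Hence the image always contains this diagonal. By the subgroup lattice of $(\mathbb{Z}/2)^2$, any subgroup containing a fixed subgroup of order two is either that subgroup itself or the whole group. This is exactly the dichotomy asserted in the proposition.

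Finally, I would carry out the orbit count by hand in each case. Under the full action of $(\mathbb{F}_3\times\mathbb{F}_3)^\times$ by componentwise multiplication, the orbits on $\mathbb{F}_3\times\mathbb{F}_3$ are
\[
\{(0,0)\},\quad \{(\pm 1,0)\},\quad \{(0,\pm 1)\},\quad \{(\pm 1,\pm 1)\},
\]
totalling four. When the image is only the diagonal $\{(1,1),(-1,-1)\}$, so only simultaneous sign flips are permitted, the last orbit splits into $\{(1,1),(-1,-1)\}$ and $\{(1,-1),(-1,1)\}$ while the others are unchanged; this yields five orbits.

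The main substantive point is the observation that the cyclotomic units $\pm\zeta_3^i$ collapse onto the diagonal in $(\mathbb{F}_3\times\mathbb{F}_3)^\times$; once this is noted, both the dichotomy and the orbit count reduce to inspection. The genuinely number-theoretic question of \emph{which} of the two cases actually occurs for a given $m$ (i.e.\ whether $\mathcal{O}_{-m}[\zeta_3]$ admits a unit whose reduction modulo $3$ is off-diagonal) is not needed for this statement and is treated separately in the examples that follow.
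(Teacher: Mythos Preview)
Your proof is correct and follows essentially the same route as the paper: identify $\mathcal{O}_{-m}[\zeta_3]/(\zeta_3-1)\cong\mathbb{F}_3\times\mathbb{F}_3$, observe that the global units $\pm 1$ (the paper) or more generally $\pm\zeta_3^i$ (your version) reduce to the diagonal $\{(1,1),(-1,-1)\}$ so that the image is either this diagonal or the full group, and then list the orbits explicitly in each case. The only cosmetic difference is that the paper also records how $\sqrt{-m}$ reduces to $(1,-1)$, information used in the subsequent examples rather than in the proof itself.
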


\begin{proof}
Since $m\equiv 2\bmod 3$, we are in the case $\mathcal{O}_{\mathbb{Q}(\sqrt{-m})}/(3)\cong \mathbb{F}_3[X]/(X^2+m)$ where the units are given by $(\mathbb{F}_3\times\mathbb{F}_3)^\times\cong \mathbb{Z}/2\mathbb{Z}^{\oplus 2}$, concretely realized as the subset $\{(\pm 1,\pm 1)\}$. Note that actually $\mathbb{F}_3\times\mathbb{F}_3\cong \mathbb{F}_3[X]/(X^2+m)$, so that an integer $n\in\mathbb{Z}$ always reduces to $(\overline{n},\overline{n})$, and $\sqrt{-m}\in\mathcal{O}_{-m}$ maps to $(1,-1)$. In particular, if we consider the natural reduction morphism $\mathcal{O}_{-m}[\zeta_3]^\times\to(\mathbb{F}_3\times\mathbb{F}_3)^\times$, the elements $(1,1)$ and $(-1,-1)$ are the images of the global units $\pm 1\in \mathcal{O}_{-m}[\zeta_3]$. So there are only two possibilities for the image of the reduction map: either it is $\mathbb{Z}/2\mathbb{Z}\cong\{(1,1),(-1,-1)\}$ or it is the full group $(\mathbb{F}_3\times\mathbb{F}_3)^\times$. Consequently, there are two possibilities for the orbit set. In both of them, we have the orbits $\{(0,0)\}$, $\{(1,0),(-1,0)\}$ and $\{(0,1),(0,-1)\}$, irrespective of the image. If the image contains $(1,-1)$, then we have an orbit $\{(\pm 1,\pm 1)\}$. If the image does not contain $(1,-1)$, then we have two orbits $\{(1,1),(-1,-1)\}$ and $\{(1,-1),(-1,1)\}$. This proves the claim. 
\end{proof}

\begin{example}
We give examples that both possibilities can appear. 

Consider the case $m=2$. Using Pari/GP~\cite{PariGP}, we find that $\zeta_3-\sqrt{-2}\zeta_3+2$ is a fundamental unit of $\mathcal{O}_{-2}[\zeta_3]$. Since integers map to their reduction, $\sqrt{-2}$ maps to $(1,2)$ and $\zeta_3$ maps to $(1,1)$, the image of this unit under the reduction map is $(1,1)-(1,2)(1,1)+(2,2)=(2,1)$. Thus we are in the case where there are 4 orbits. 

Consider the case $m=5$. Using Pari/GP, we find that $-\sqrt{-5}+4(\zeta_3+1)+\zeta_3\sqrt{-5}$ is a fundamental unit of $\mathcal{O}_{-5}[\zeta_3]$. The image of this unit under the reduction map is $(2,1)+(8,8)+(1,2)=(-1,-1)$. In particular, $(1,2)$ is not in the image of the reduction map and therefore we are in the case with $5$ orbits. 
\end{example}

\begin{example}
  \label{ex:unit-index-1}
  In Table~\ref{first ten square-free}, we summarize the outcome for the first ten square-free numbers $m$ with $m\equiv 2\bmod 3$. The first column lists the integer $m$, the second column lists the class number of the composite field (relevant for the number of conjugacy classes). The third column lists the factor $$\frac{h_{\mathbb{Q}(\sqrt{-m},\sqrt{-3})}}{h_{\mathbb{Q}(\sqrt{-m})}h_{\mathbb{Q}(\sqrt{3m})}}$$ involving all the class numbers of the composite and intermediate fields, and the last column lists the number of orbits. (Equivalently, this is the information if the reduction of a fundamental unit modulo the ideal $(\zeta_3-1)$ is $\pm 1$ or not. The first case gives 5 orbits, the second 4.)
\begin{table}
\begin{center}
\begin{tabular}{|c||c||c||c|}
\hline
$m$ & $h_{\mathbb{Q}(\sqrt{-m},\sqrt{-3})}$ & \textrm{factor}& \textrm{orbit number} \\
\hline\hline
2 & 1 & 1 & 4\\\hline
5 & 2 & 1/2 & 5\\\hline
11 & 1 & 1 & 4 \\\hline
14 & 4 & 1/2 & 5\\\hline
17 & 4 & 1/2 & 5\\\hline
23 & 3 & 1 & 4\\\hline
26 & 12 & 1 & 4\\\hline
29 & 6 & 1/2 & 5\\\hline
35 & 2 & 1/2 & 5\\\hline
38 & 6 & 1/2 & 5\\\hline
\end{tabular}
\caption{Number of orbits for the first ten square-free numbers $m$ with $m\equiv 2\bmod 3$ in view of the factor $\frac{h_{\mathbb{Q}(\sqrt{-m},\sqrt{-3})}}{h_{\mathbb{Q}(\sqrt{-m})}h_{\mathbb{Q}(\sqrt{3m})}}$, cf. Example~\ref{ex:unit-index-1}.}
\label{first ten square-free}
\end{center}
\end{table}
Table~\ref{first ten square-free} suggests that there is a relationship between the number of orbits and the unit index in the class number formula, cf. \cite{kubota}. In our table, the orbit number $4$ appears precisely in those cases where $h_{\mathbb{Q}(\sqrt{-m},\sqrt{-3})}=h_{\mathbb{Q}(\sqrt{-m})}h_{\mathbb{Q}(\sqrt{3m})}$; and the orbit number $5$ appears in those cases where $h_{\mathbb{Q}(\sqrt{-m},\sqrt{-3})}=(1/2)h_{\mathbb{Q}(\sqrt{-m})}h_{\mathbb{Q}(\sqrt{3m})}$. This is actually true for all $m\leq 1000$ with $m\equiv 2 \bmod 3$. A \verb!SAGE!~\cite{Sage} script to check this is provided on this paper's webpage~\cite{github}.
\end{example}

\begin{proposition}
Assume $m\equiv 1\bmod 3$. The number of $\mathcal{O}_{-m}[\zeta_3]^\times$-orbits is given by Table~\ref{number_of_orbits}, depending on the image of the natural reduction morphism $\mathcal{O}_{-m}[\zeta_3]^\times\to(\mathbb{F}_9)^\times\cong\mathbb{Z}/8\mathbb{Z}$.
\end{proposition}
\begin{table}
\begin{center}
\begin{tabular}{|c||c|}
\hline
\textrm{image} & \textrm{orbit number} \\
\hline\hline
$\mathbb{Z}/8\mathbb{Z}$ & 2\\\hline
$\mathbb{Z}/4\mathbb{Z}$ & 3\\\hline
$\mathbb{Z}/2\mathbb{Z}$ & 5\\\hline
\end{tabular}
\caption{Number of $\mathcal{O}_{-m}[\zeta_3]^\times$-orbits depending on the image of the natural reduction morphism $\mathcal{O}_{-m}[\zeta_3]^\times\to(\mathbb{F}_9)^\times\cong\mathbb{Z}/8\mathbb{Z}$, in the case $m\equiv 1\bmod 3$.}
\label{number_of_orbits}
\end{center}
\end{table}

\begin{proof}
The subgroup $\{\pm 1\}$ is always contained in the image of the global units $\pm 1\in\mathcal{O}_{-m}[\zeta_3]$, hence the above list exhausts all possible cases. The orbits are then given by $\{0\}$ and the cosets of the image of the reduction map in $\mathbb{F}_9^\times\cong\mathbb{Z}/8\mathbb{Z}$. 
\end{proof}

\begin{example}
  \label{ex:unit-index-2}
Again, all three cases appear as can be checked by computing fundamental units using Pari/GP~\cite{PariGP}. This is most easily done in the cases where $-m\equiv 2,3\bmod 4$ since in this case $\zeta_3\sqrt{-m}$ is a primitive element giving rise to the integral basis $\{1,\zeta_3+1,\zeta_3\sqrt{-m},-\sqrt{-m}\}$. Once the fundamental unit is computed, its image under the reduction map is determined by noting that integers $n\in\mathbb{Z}$ map to the subfield $\mathbb{F}_3\subset\mathbb{F}_9$ via reduction mod 3, $\zeta_3$ maps to 1 and $\sqrt{-m}$ maps to a primitive element of the field extension $\mathbb{F}_9/\mathbb{F}_3$. If the fundamental unit maps to the reduction of an integer, the image is $\mathbb{Z}/2\mathbb{Z}$, if it maps to $\pm\sqrt{-m}$ the image is $\mathbb{Z}/4\mathbb{Z}$ and if it maps to a linear combination $\pm 1\pm\sqrt{-m}$ then it is the full image. For the first ten square-free $m$ with $m\equiv 1\bmod 3$, the results are presented in Table~\ref{1mod3}.
\begin{table}
\begin{center}
\begin{tabular}{|c||c||c||c|}
\hline
$m$ & $h_{\mathbb{Q}(\sqrt{-m},\sqrt{-3})}$ & \textrm{factor} & \textrm{orbit number} \\
\hline\hline
1 & 1 & 1 & 2\\\hline
7 & 1 & 1 & 3\\\hline
10 & 2 & 1/2 & 5\\\hline
13 & 4 & 1 & 3\\\hline
19 & 1 & 1 & 3\\\hline
22 & 2 & 1/2 & 5\\\hline
31 & 3 & 1 & 3\\\hline
34 & 4 & 1/2 & 5\\\hline
37 & 4 & 1 & 3\\\hline
43 & 1 & 1 & 3\\\hline
\end{tabular}
\end{center}
\caption{Number of orbits for the first ten square-free $m$ with $m\equiv 1\bmod 3$ in Example~\ref{ex:unit-index-2}.}
\label{1mod3}
\end{table}
Again, our table suggests that there is a relationship between the number of orbits and the unit index. It seems reasonable to expect that orbit number 2 only appears in the exceptional case $m=1$ where the unit group of $\mathcal{O}_{-m}$ is $\mu_4$; orbit number 3 should appear in those cases where $h_{\mathbb{Q}(\sqrt{-m},\sqrt{-3})}=h_{\mathbb{Q}(\sqrt{-m})}h_{\mathbb{Q}(\sqrt{3m})}$; and the orbit number $5$ should appear in those cases where $h_{\mathbb{Q}(\sqrt{-m},\sqrt{-3})}=(1/2)h_{\mathbb{Q}(\sqrt{-m})}h_{\mathbb{Q}(\sqrt{3m})}$. As in the previous example, this is true for $m\leq 1000$ with $m\equiv 1\bmod 3$, see this paper's webpage~\cite{github} for how to check this with a \verb!SAGE!~\cite{Sage} script.
\end{example}

\begin{remark}
  Currently, we have no conceptual explanation for the apparent (and experimentally supported) correlation between the factor $Q$ in the class number formula
  \[
  h_{\mathbb{Q}(\sqrt{-m},\sqrt{-3})}=\frac{Q}{2}h_{\mathbb{Q}(\sqrt{-m})}h_{\mathbb{Q}(\sqrt{3m})}
  \]
  of Kubota \cite{kubota} and the image of the reduction map $\mathcal{O}_{-m}[\zeta_3]^\times\to \mathcal{O}_{-m}/(3)^\times$. Some relation to indices of unit groups for the number rings are discussed in loc.cit., but it is not immediately apparent how to link these to the reduction map on units modulo the ramified prime.  
\end{remark}

Now it remains to determine the structure of the normalizers and compute the appropriate contributions to the Farrell--Tate cohomology. The corresponding centralizers will be of the form
\[
\mathcal{O}_{\mathbb{Q}(\sqrt{-m},\zeta_3)}^\times\times_{\op{End}(\mathcal{O}_{-m}/(3))}^c \mathcal{O}_{\mathbb{Q}(\sqrt{-m})}^\times
\]
for an $\mathcal{O}_{-m}[\zeta_3]^\times$-orbit representative $c\in\mathcal{O}_{-m}[\zeta_3]/(\zeta_3-1)$. For the case where $m\equiv 1\bmod 3$, $c$ is either $0$ or a unit, giving rise to the two possibilities 
\[
\mathcal{O}_{\mathbb{Q}(\sqrt{-m},\zeta_3)}^\times\times \mathcal{O}_{\mathbb{Q}(\sqrt{-m})}^\times\quad\textrm{ and }\quad
\mathcal{O}_{\mathbb{Q}(\sqrt{-m},\zeta_3)}^\times\times_{\left(\mathcal{O}_{\mathbb{Q}(\sqrt{-m})}/(3)\right)^\times} \mathcal{O}_{\mathbb{Q}(\sqrt{-m})}^\times,
\]
respectively. In the case $m\equiv 2\bmod 3$, there are two further possibilities for the orbits $\{(\pm 1,0)\}$ and $\{(0,\pm 1)\}$. In these cases, 
\[
\mathcal{O}_{\mathbb{Q}(\sqrt{-m},\zeta_3)}^\times\times_{\op{End}(\mathcal{O}_{-m}/(3))}^c \mathcal{O}_{\mathbb{Q}(\sqrt{-m})}^\times
\]
consists of those pairs $(\phi,\psi)\in \mathcal{O}_{\mathbb{Q}(\sqrt{-m},\zeta_3)}^\times\times \mathcal{O}_{\mathbb{Q}(\sqrt{-m})}^\times$ for which the reductions of $\phi$ and $\psi$ agree in the first or second component of $(\mathbb{F}_3\times\mathbb{F}_3)^\times\cong \mathbb{Z}/2\mathbb{Z}^{\times 2}$, respectively. The normalizers (whenever they do not already agree with the centralizers) will be extensions of the above groups by the group 
\[
\op{Gal}(\mathbb{Q}(\sqrt{-m},\zeta_3)/\mathbb{Q}(\sqrt{-m}))
\cong\mathbb{Z}/2\mathbb{Z}
\]
acting via the Galois action $\zeta_3\mapsto\zeta_3^2$ on the first factor and trivially on the second. Note that these actions are actually compatible via the reduction to $\mathcal{O}_{-m}[\zeta_3]/(\zeta_3-1)\cong\mathcal{O}_{-m}/(3)$ because the extension $\mathbb{Q}(\sqrt{-m},\zeta_3)/\mathbb{Q}(\sqrt{-m})$ is completely ramified over $(3)$.

By Dirichlet's unit theorem, 
\[
\mathcal{O}_{\mathbb{Q}(\sqrt{-m},\zeta_3)}^\times\cong \mathbb{Z}\times\mu_{3n}, \textrm{ and } \mathcal{O}_{-m}^\times\cong \mu_n
\]
where $n=2$ except in the case $m=1$ where $n=4$. The Galois action on $\mu_n$ is trivial because these are contained in $\mathcal{O}_{-m}$; and the Galois action on $\mathbb{Z}$ must also be non-trivial, i.e., given by multiplication with $-1$, since none of the non-torsion units is contained in $\mathcal{O}_{-m}$. The structure of the centralizer for the split representation is therefore
\[
\left(\mathcal{O}_{\mathbb{Q}(\sqrt{-m},\zeta_3)}^\times\times \mathcal{O}_{\mathbb{Q}(\sqrt{-m})}^\times\right)\rtimes\mathbb{Z}/2\mathbb{Z}
\cong\left(\left(\mathbb{Z}\times \mu_{3}\right)\rtimes\mathbb{Z}/2\mathbb{Z}\right)\times \mu_n^{\times 2}
\]
with $n$ as above.

For the normalizers of the non-split representations, i.e., where the orbit in $\mathcal{O}_{-m}[\zeta_3]/(\zeta_3-1)$ is different from $\{0\}$ we can again consider the two cases: 
\begin{enumerate}
\item If $m\equiv 2\bmod 3$, then $\mathcal{O}_{-m}[\zeta_3]/(\zeta_3-1)\cong\mathbb{F}_3\times\mathbb{F}_3$. The reduction map $\mu_2\cong\mathcal{O}_{-m}^\times\to (\mathbb{F}_3\times\mathbb{F}_3)^\times$ is injective. For $\mathcal{O}_{\mathbb{Q}(\sqrt{-m},\zeta_3)}^\times\cong\mathbb{Z}\times\mu_{3n}$, the reduction map $\mathcal{O}_{\mathbb{Q}(\sqrt{-m},\zeta_3)}^\times\to (\mathbb{F}_3\times\mathbb{F}_3)^\times$ is injective on $\mu_n$, the zero map on $\mu_3$ and the image of a fundamental unit depends on $m$. In the case where $c=\{(\pm 1,0)\}$ or $c=\{(0,\pm 1)\}$, the fiber product is given by the units in $\mathcal{O}_{-m}[\zeta_3]$ whose reduction has first resp. second component equal to the reduction of a unit from $\mathcal{O}_{-m}$, respectively. Since this is no condition at all, the fiber product is simply $\mathcal{O}_{-m}[\zeta_3]^\times\cong \mathbb{Z}\times\mu_{3n}$. In the case where the orbit contains $(1,-1)$, the fiber product consists of the group of units in $\mathcal{O}_{-m}[\zeta_3]$ whose reduction is of the form $(1,1)$ or $(-1,-1)$. The index of this subgroup is 1 or 2, depending on $m$, but in any case the isomorphism type of the units is again $\mathbb{Z}\times\mu_{3n}$. The Galois action here is the one we considered before, and
\[
\left(\mathcal{O}_{\mathbb{Q}(\sqrt{-m},\zeta_3)}^\times \times_{\op{End}(\mathcal{O}_{-m}/(3))}^c \mathcal{O}_{\mathbb{Q}(\sqrt{-m})}^\times\right)\rtimes\mathbb{Z}/2\mathbb{Z}
\cong\left(\left(\mathbb{Z}\times \mu_{3}\right)\rtimes\mathbb{Z}/2\mathbb{Z}\right)\times \mu_n
\]
\item If $m\equiv 1\bmod 3$, then $\mathcal{O}_{-m}[\zeta_3]/(\zeta_3-1)\cong\mathbb{F}_9$. Again the reduction map $\mu_n\cong\mathcal{O}_{-m}^\times\to \mathbb{F}_9^\times$ is injective. In any case, if $c$ is not the orbit $\{0\}$, it consists entirely of units. Hence the fiber product is the group of units of $\mathcal{O}_{-m}[\zeta_3]$ whose reduction is $\pm 1$. As an abelian group, this is again isomorphic to $\mathbb{Z}\times\mu_{3n}$ with the Galois action discussed previously, hence 
\[
\left(\mathcal{O}_{\mathbb{Q}(\sqrt{-m},\zeta_3)}^\times \times_{\op{End}(\mathcal{O}_{-m}/(3))}^c \mathcal{O}_{\mathbb{Q}(\sqrt{-m})}^\times\right)\rtimes\mathbb{Z}/2\mathbb{Z}
\cong\left(\left(\mathbb{Z}\times \mu_{3}\right)\rtimes\mathbb{Z}/2\mathbb{Z}\right)\times \mu_n
\]
\end{enumerate}

Now we have all the information we need to state the computation of the Farrell--Tate cohomology of $\op{PGL}_3(\mathcal{O}_{\mathbb{Q}(\sqrt{-m})})$.

\begin{theorem}
\label{thm:gl3_3-torsion}
Let $m\neq 3$ be a positive square-free integer and assume that $\mathcal{O}_{-m}$ is a principal ideal domain. Then
\[
\widehat{\op{H}}^\bullet\left(\op{PGL}_3(\mathcal{O}_{-m});\mathbb{F}_3\right)
\cong 
\widehat{\op{H}}^\bullet\left(\mathbb{Z}\times\mu_3;\mathbb{F}_3\right)^{\oplus u\cdot h_\lambda}\oplus 
\widehat{\op{H}}^\bullet\left(\left(\mathbb{Z}\times\mu_3\right)\rtimes
\mathbb{Z}/2\mathbb{Z};\mathbb{F}_3\right)^{\oplus u\cdot h_\mu}
\]
Here $u$ is the number of $\mathcal{O}_{-m}[\zeta_3]^\times$-orbits on $\mathcal{O}_{-m}[\zeta_3]/(\zeta_3-1)$, $h_\mu$ is the number of ideal classes in $\mathcal{O}_{-m}[\zeta_3]$ which are Galois-invariant for the natural action of $\op{Gal}(\mathbb{Q}(\sqrt{-m},\sqrt{-3})/\mathbb{Q}(\sqrt{-m}))$, and $h_\lambda$ is the number of 2-element Galois-orbits of ideal classes.
% \footnote
{In particular, $h_{\mathbb{Q}(\sqrt{-m},\sqrt{-3})}=2h_\lambda+h_\mu$.}
\end{theorem}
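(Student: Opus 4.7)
The plan is to assemble the result from Brown's formula for $\ell$-rank $1$ groups together with the classifications and normalizer computations from the preceding sections. Brown's formula gives
\[
\widehat{\op{H}}^\bullet(\op{PGL}_3(\mathcal{O}_{-m});\mathbb{F}_3) \cong \prod_{[G]} \widehat{\op{H}}^\bullet(\op{C}_\Gamma(G);\mathbb{F}_3)^{\op{N}_\Gamma(G)/\op{C}_\Gamma(G)},
\]
indexed by conjugacy classes of cyclic subgroups of order $3$. The proof thus reduces to three tasks: enumerate the conjugacy classes, compute the normalizer contributions, and assemble the answer.

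For the enumeration, I would combine Corollary~\ref{cor:conjiso} with Proposition~\ref{prop:halfreiner} to parametrize conjugacy classes of embeddings by pairs $(\mathfrak{a}, c)$, where $\mathfrak{a}$ is an ideal class of $\mathcal{O}_{-m}[\zeta_3]$ and $c$ runs over the $c$ many $\mathcal{O}_{-m}[\zeta_3]^\times$-orbits in $\mathcal{O}_{-m}[\zeta_3]/(\zeta_3-1)$, enumerated by the preceding propositions of this section. To pass from embeddings to subgroups one quotients by the special automorphism $\zeta_3 \mapsto \zeta_3^{-1}$; under the parametrization this acts as Galois conjugation on $\mathfrak{a}$ and trivially on $c$, because the ideal $(\zeta_3-1)$ is Galois-stable and the induced isomorphism $\mathcal{O}_{-m}[\zeta_3]/(\zeta_3-1) \cong \mathcal{O}_{-m}/(3)$ identifies $\mathbb{Q}(\sqrt{-m})$-integers with Galois-invariants. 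Hence the subgroup classes split into $c \cdot h_\mu$ classes arising from Galois-fixed ideal classes and $c \cdot h_\lambda$ classes arising from two-element Galois orbits, and the identity $h_{\mathbb{Q}(\sqrt{-m},\sqrt{-3})} = 2 h_\lambda + h_\mu$ follows by orbit counting.

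Next I would read off the normalizer data from Section~\ref{sec:centralizer}. The explicit descriptions preceding the theorem statement identify each centralizer, via Lemma~\ref{lem:fiber}, as a group of the form $\mathbb{Z} \times \mu_3 \times F$ with $F$ a finite abelian group of order coprime to $3$ (a product of copies of $\mu_n$ for $n \in \{2,4\}$). Consequently $\widehat{\op{H}}^\bullet(\op{C}_\Gamma(G);\mathbb{F}_3) \cong \widehat{\op{H}}^\bullet(\mathbb{Z} \times \mu_3;\mathbb{F}_3)$. By Lemma~\ref{lem:semidirect}, the Weyl-type quotient $\op{N}_\Gamma(G)/\op{C}_\Gamma(G)$ equals the Galois stabilizer of $\mathfrak{a}$: the full $\mathbb{Z}/2$ when $\mathfrak{a}$ is Galois-fixed, and trivial otherwise. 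Since $|\mathbb{Z}/2|$ is coprime to $3$, the Hochschild--Serre spectral sequence for $(\mathbb{Z} \times \mu_3) \rtimes \mathbb{Z}/2$ collapses to the row $q=0$ and identifies the $\mathbb{Z}/2$-invariants of the centralizer's mod-$3$ cohomology with $\widehat{\op{H}}^\bullet((\mathbb{Z} \times \mu_3) \rtimes \mathbb{Z}/2;\mathbb{F}_3)$.

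Assembly is then bookkeeping: the $c \cdot h_\mu$ subgroup classes with Galois-fixed $\mathfrak{a}$ each contribute $\widehat{\op{H}}^\bullet((\mathbb{Z} \times \mu_3) \rtimes \mathbb{Z}/2;\mathbb{F}_3)$, while the $c \cdot h_\lambda$ classes arising from the non-trivial Galois orbits each contribute $\widehat{\op{H}}^\bullet(\mathbb{Z} \times \mu_3;\mathbb{F}_3)$, producing the claimed direct-sum decomposition. The main subtlety I anticipate is verifying step one cleanly: one must check that the $c$-parameter really is Galois-invariant (so that the factor $c$ pulls out of both summand counts), and that the normalizer structure from Lemma~\ref{lem:semidirect} is compatible with the chosen orbit representative. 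Once this compatibility is established, the remaining steps are formal consequences of Brown's formula and the vanishing of mod-$3$ Farrell--Tate cohomology on prime-to-$3$ torsion factors.
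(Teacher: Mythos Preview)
Your proposal is correct and follows essentially the same route as the paper: Brown's formula in $\ell$-rank~1, the parametrization of conjugacy classes via Corollary~\ref{cor:conjiso} and Proposition~\ref{prop:halfreiner}, and the normalizer structure from Lemmas~\ref{lem:fiber} and~\ref{lem:semidirect} together with the explicit centralizer discussion preceding the theorem. Your treatment is in fact more explicit than the paper's terse proof on the one point you flag as subtle---the Galois-invariance of the orbit parameter---and your justification (total ramification at $3$ forces the Galois action on $\mathcal{O}_{-m}[\zeta_3]/(\zeta_3-1)\cong\mathcal{O}_{-m}/(3)$ to be trivial) is exactly what the paper invokes in the paragraph before the theorem.
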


\begin{proofof}{Theorem~\ref{thm:3-torsion} and Theorem~\ref{thm:gl3_3-torsion}}
\label{proofthm11}
Both theorems follow from Brown's formula for the Farrell--Tate cohomology. By the discussion in Section~\ref{sec:prelims}, the elementary abelian $3$-subgroups of $\op{GL}_3(\mathcal{O}_{-m})$ all have rank 1. The structure of the centralizers and normalizers was described in the Dedekind ring case in Lemma~\ref{lem:fiber} and Lemma~\ref{lem:semidirect}. The same results are true in the more general case of Theorem~\ref{thm:3-torsion}, using the structure results concerning unit groups of orders, cf. \cite{neukirch}*{Chapter 1, \S12, Theorem 12.12 and Proposition 12.9}.  Theorem~\ref{thm:gl3_3-torsion} then follows from the more precise result of Proposition~\ref{prop:halfreiner} and the previous discussion of the $\mathcal{O}_{-m}[\zeta_3]^\times$-action on $\mathcal{O}_{-m}/(3)$ above.
\end{proofof}

More explicit information on the Farrell--Tate cohomology of such groups can now be obtained via the following computation included in \cite{sl2ff}: 
 
\begin{proposition} 
\label{prop:ftformula}
Let $A=\mathbb{Z}/n\mathbb{Z}\times \mathbb{Z}^r$, and let $\ell$ be an odd prime with $\ell\mid n$. Then, with $b_1,x_1,\dots,x_r$ denoting classes in degree $1$ and $a_2$ a class of degree $2$, 
\[
\widehat{\op{H}}^{\bullet}(A;\mathbb{F}_\ell)\cong \widehat{\op{H}}^\bullet(\mathbb{Z}/n\mathbb{Z};\mathbb{F}_\ell) \otimes_{\mathbb{F}_\ell}\bigwedge^\bullet \mathbb{F}_\ell^r\cong \mathbb{F}_\ell[a_2,a_2^{-1}](b_1,x_1,\dots,x_r).
\]

The Hochschild--Serre spectral sequence associated to the semi-direct product $A\rtimes\mathbb{Z}/2\mathbb{Z}$ (where $\mathbb{Z}/2\mathbb{Z}$ acts as $-1$ on $A$) degenerates and yields an isomorphism
\[
\widehat{\op{H}}^\bullet(A\rtimes\mathbb{Z}/2\mathbb{Z}; \mathbb{F}_\ell)\cong   \widehat{\op{H}}^\bullet(A;\mathbb{F}_\ell)^{\mathbb{Z}/2\mathbb{Z}}.
\]
The invariant classes are then given as linear combinations of tensor products of $a_2^{\otimes 2i}$ with even-degree exterior classes and tensor products of $a_2^{\otimes (2i+1)}$ with odd-degree exterior classes, respectively. 
\end{proposition}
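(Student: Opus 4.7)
The plan is to treat the two displayed isomorphisms separately by means of Hochschild--Serre spectral sequences in the Farrell--Tate setting.

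For the first isomorphism, I would apply Hochschild--Serre to the split extension $1\to\Z^r\to A\to \Z/n\Z\to 1$. Since $\Z^r$ is torsion-free, its Farrell--Tate cohomology agrees with ordinary cohomology $\bigwedge^\bullet\F_\ell^r$, and the conjugation action is trivial because $A$ is a direct product. The spectral sequence therefore reads
\[
E_2^{p,q}=\widehat{\op{H}}^p(\Z/n\Z;\F_\ell)\otimes\bigwedge^q\F_\ell^r.
\]
Here one uses the classical identification $\FT(\Z/n\Z;\F_\ell)\cong\F_\ell[a_2,a_2^{-1}]\otimes\Lambda(b_1)$, which is available for odd $\ell$ dividing $n$. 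The section $\Z/n\Z\hookrightarrow A$ together with the product projection $A\twoheadrightarrow\Z^r$ provides multiplicative retractions onto both axes of the spectral sequence, forcing collapse at $E_2$ with no multiplicative extension problems and producing the claimed presentation.

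For the second isomorphism, I would set up the Hochschild--Serre spectral sequence for $1\to A\to A\rtimes\Z/2\Z\to\Z/2\Z\to 1$, namely $E_2^{p,q}=H^p(\Z/2\Z;\widehat{\op{H}}^q(A;\F_\ell))\Rightarrow \FT(A\rtimes\Z/2\Z;\F_\ell)$. The decisive input is that $\ell$ is odd, so the order of $\Z/2\Z$ is invertible in $\F_\ell$, hence $H^p(\Z/2\Z;-)$ vanishes on $\F_\ell$-modules for $p>0$. The spectral sequence is then concentrated in the column $p=0$, collapse is automatic, and one recovers $\FT(A\rtimes\Z/2\Z;\F_\ell)\cong\FT(A;\F_\ell)^{\Z/2\Z}$.

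To identify the invariants concretely I would argue by naturality under the involution. Its action on $H^1(A;\F_\ell)=\op{Hom}(A,\F_\ell)$ is visibly multiplication by $-1$, so each of the exterior generators $b_1,x_1,\dots,x_r$ picks up a sign. Since the periodicity generator $a_2$ is the Bockstein of $b_1$ (equivalently, the generator of $2$-periodicity in the Tate cohomology of $\Z/n\Z$), naturality of the Bockstein forces $a_2$ to transform by $-1$ as well. A monomial $a_2^j$ times a $k$-fold exterior product in the degree-$1$ generators therefore acquires the sign $(-1)^{j+k}$, and invariance is equivalent to $j+k$ being even; this reproduces the stated parity condition that pairs even powers of $a_2$ with even exterior powers and odd powers of $a_2$ with odd exterior powers.

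The main technical point I anticipate is the collapse of the first spectral sequence and the absence of extension problems in the Farrell--Tate setting, where Künneth is less standard than for ordinary cohomology. My fallback plan is to argue collapse via the explicit splittings on both axes, and to transport the multiplicative structure through the $2$-periodicity by comparison with ordinary cohomology in sufficiently large degrees.
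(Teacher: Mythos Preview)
The paper itself does not prove this proposition; it simply imports the statement from \cite{sl2ff}. So there is no ``paper's own proof'' to compare against, and your outline has to stand on its own. On the whole it does: the second isomorphism via the extension $1\to A\to A\rtimes\Z/2\Z\to\Z/2\Z\to 1$ and the vanishing of $\op{H}^{>0}(\Z/2\Z;-)$ on $\F_\ell$-modules is exactly right (and can be phrased even more directly via the transfer, since $[A\rtimes\Z/2\Z:A]=2$ is invertible in $\F_\ell$), and your identification of the $\Z/2\Z$-action on the generators and hence of the invariants is correct.

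There is one genuine slip in the first part. Your sentence ``since $\Z^r$ is torsion-free, its Farrell--Tate cohomology agrees with ordinary cohomology'' is false: a group of finite cohomological dimension has \emph{vanishing} Farrell--Tate cohomology in all degrees. Fortunately the $E_2$-page you then write down,
\[
E_2^{p,q}=\widehat{\op{H}}^p(\Z/n\Z;\F_\ell)\otimes\textstyle\bigwedge^q\F_\ell^r,
\]
is nonetheless the correct one, because the Farrell--Tate Hochschild--Serre spectral sequence for an extension $1\to N\to G\to Q\to 1$ with $N$ of finite cohomological dimension uses the \emph{ordinary} cohomology of $N$ on the fibre axis and the Farrell--Tate cohomology of $Q$ on the base axis. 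You should either cite this carefully or sidestep it: the cleanest route is to tensor the $2$-periodic complete resolution of $\Z$ over $\Z[\Z/n\Z]$ with the finite Koszul resolution of $\Z$ over $\Z[\Z^r]$ to obtain a complete resolution for $A$, which gives the K\"unneth decomposition and the invertibility of $a_2$ in one stroke, with no collapse or extension issues to worry about. Your fallback plan of comparing with ordinary cohomology in large degrees and transporting via periodicity is also perfectly viable.
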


The Farrell--Tate cohomology of the semidirect product groups in Proposition~\ref{prop:ftformula} is additively isomorphic to two copies of the cohomology of the dihedral group with $2n$ elements, with one copy shifted by $1$. This applies in particular to groups of the form
\[
\mathcal{O}_K[\zeta_\ell]^\times\rtimes\op{Gal}(K(\zeta_\ell)/K)\cong 
\left(\mathbb{Z}\times\mathbb{Z}/n\mathbb{Z}\right) \rtimes\mathbb{Z}/2\mathbb{Z}
\]
with $K=\mathbb{Q}(\sqrt{-m})$ 
(where the action in the semidirect product on the right is consequently given by multiplication with $-1$). 
% looks like the direct sum of two copies of the cohomology of the dihedral group with $2n$ elements, with one copy shifted by one.

The algebra in Theorem~\ref{thm:gl3_3-torsion} is given by the $\mathbb{Z}/2\mathbb{Z}$-invariant elements in the algebra $\mathbb{F}_3[a_2^{\pm 1}](b_1,x_1)$, where the action of $\mathbb{Z}/2\mathbb{Z}$ is by multiplication with $-1$ on all the generators. The invariant subalgebra is then generated by the classes $b_1x_1$ in degree 2, $b_1a_2$ and $x_1a_2$ in degree 3, and  $a_2^2$ in degree 4. Consequently, the Hilbert--Poincar{\'e} series for the positive degrees is 
\[
2\frac{T^2+2T^3+T^4}{1-T^4}=2\frac{T^2(1+T)^2}{1-T^4}.
\]

\subsection{Homological $3$-torsion in $\op{PGL}_3$ for real quadratic fields}
Now we discuss analogues of the above results for rings of integers in real quadratic fields. So let $m$ be a positive square-free number with $3\nmid m$, denote by $\mathcal{O}_{m}=\mathcal{O}_{\mathbb{Q}(\sqrt{m})}$ and assume that $\mathcal{O}_{m}$ is a principal ideal domain. As before, conjugacy classes are parametrized by pairs $(\mathfrak{a},c)$ with $\mathfrak{a}$ an ideal class in $\mathcal{O}_{m}[\zeta_3]$ and $c$ an $\mathcal{O}_{m}[\zeta_3]^\times$-orbit of elements in $\mathcal{O}_m[\zeta_3]/(\zeta_3-1)$. 

As in the imaginary case, $\mathcal{O}_{m}[\zeta_3]/(\zeta_3-1)\cong\mathcal{O}_m/(3)$ is 
\[
\mathbb{F}_3[X]/(X^2-m)\cong\left\{\begin{array}{ll} \mathbb{F}_3\times\mathbb{F}_3 & m\equiv 1\bmod 3\\ \mathbb{F}_9 & m\equiv 2\bmod 3\end{array}\right.
\]
An essential difference is now that the unit group $\mathcal{O}_{m}^\times\cong \mathbb{Z}\times\mu_2$ is already of rank 1. For the natural reduction morphism $\mathcal{O}_{m}^\times\to \mathcal{O}_m/(3)^\times$, various possibilities occur: for $\mathcal{O}_2$, $\mathcal{O}_5$, $\mathcal{O}_{13}$ and $\mathcal{O}_{17}$ we have maximal possible image, while for $\mathcal{O}_7$ and $\mathcal{O}_{11}$ the image is $\mathbb{Z}/2\mathbb{Z}\cong\{\pm 1\}$. The image of the natural reduction morphism $\mathcal{O}_m[\zeta_3]^\times\to \mathcal{O}_m/(3)^\times$ will then always contain the image of the reduction morphism $\mathcal{O}_{m}^\times\to \mathcal{O}_m/(3)^\times$ because $\mathcal{O}_m^\times\hookrightarrow\mathcal{O}_m[\zeta_3]^\times$ is a finite index subgroup. In particular, in those cases where already $\mathcal{O}_m^\times$ surjects onto $\mathcal{O}_m/(3)^\times$, we will have the minimal possible number of orbits. 

By Dirichlet's unit theorem, $\mathcal{O}_m^\times\cong \mathbb{Z}\times\mu_2$ and $\mathcal{O}_m[\zeta_3]^\times\cong \mathbb{Z}\times\mu_6$. The fiber product of Lemma~\ref{lem:fiber} is then (up to 2-torsion) isomorphic to $\mathbb{Z}\times\mathbb{Z}\times\mu_3$. Note that here the case distinctions in Lemma~\ref{lem:fiber} do not affect the isomorphism type of the fiber product as abelian groups (up to 2-torsion), only the index as subgroups of the product of unit groups $\mathcal{O}_{K,S}[C_\ell]^\times\times\mathcal{O}_{K,S}^\times$. The Galois action of $\op{Gal}(\mathbb{Q}(\sqrt{m},\sqrt{3})/\mathbb{Q}(\sqrt{m}))$ on $\mu_3$ is the non-trivial one mapping $\zeta_3\mapsto \zeta_3^2$. The Galois action on $\mathbb{Z}^2$ is trivial: for the summand $\mathbb{Z}\subseteq\mathcal{O}_m^\times$ this is clear, and the summand $\mathbb{Z}\subseteq\mathcal{O}_m[\zeta_3]^\times$ contains $\mathcal{O}_m^\times$ as finite-index subgroup. Consequently, the structure of the normalizer in the real quadratic case is $\mathbb{Z}^2\times(\mu_3\rtimes\mathbb{Z}/2\mathbb{Z})$. 

For the Farrell--Tate cohomology, this implies the following result:
\begin{proposition}
\label{prop:real}
Let $m$ be a positive square-free integer with $3\nmid m$ and assume that $\mathcal{O}_{m}$ is a principal ideal domain. Then 
\[
\widehat{\op{H}}^\bullet\left(\op{PGL}_3(\mathcal{O}_{m});\mathbb{F}_3\right)
\cong 
\widehat{\op{H}}^\bullet\left(\mathbb{Z}^2\times\mu_3;\mathbb{F}_3\right)^{\oplus u\cdot h_\lambda}\oplus 
\widehat{\op{H}}^\bullet\left(\mathbb{Z}^2\times\left(\mu_3\rtimes
\mathbb{Z}/2\mathbb{Z}\right);\mathbb{F}_3\right)^{\oplus u\cdot h_\mu}
\]
Here $u$ is the number of $\mathcal{O}_{m}[\zeta_3]^\times$-orbits on $\mathcal{O}_{m}[\zeta_3]/(\zeta_3-1)$, $h_\mu$ is the number of ideal classes in $\mathcal{O}_{m}[\zeta_3]$ which are Galois-invariant for the natural action of $\op{Gal}(\mathbb{Q}(\sqrt{m},\sqrt{-3})/\mathbb{Q}(\sqrt{m}))$, and $h_\lambda$ is the number of 2-element Galois orbits of ideal classes.
\end{proposition}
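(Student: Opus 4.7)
The plan is to mirror the proof of Theorem~\ref{thm:gl3_3-torsion}, adapting the torsion/rank structure of the centralizer to account for Dirichlet's unit theorem in the real-quadratic setting. First I would check the hypotheses of Proposition~\ref{prop:halfreiner}: since $3 \nmid m$ and $\mathcal{O}_m$ is a principal ideal domain, Lemma~\ref{lem:basis} ensures $\mathcal{O}_m[\zeta_3]$ is a Dedekind ring, and the rank-one hypothesis is forced by the discussion in Section~\ref{sec:prelims}. Corollary~\ref{cor:conjiso} then parametrizes conjugacy classes of order-$3$ subgroups of $\op{PGL}_3(\mathcal{O}_m)$ as $\op{Gal}(\mathbb{Q}(\sqrt{m},\zeta_3)/\mathbb{Q}(\sqrt{m}))$-orbits on pairs $(\mathfrak{a},c)$ with $\mathfrak{a}$ an ideal class in $\mathcal{O}_m[\zeta_3]$ and $c$ an $\mathcal{O}_m[\zeta_3]^\times$-orbit in $\mathcal{O}_m[\zeta_3]/(\zeta_3-1)$. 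Splitting the ideal classes into $h_\mu$ Galois-invariant ones and $h_\lambda$ two-element orbits produces $c \cdot (h_\mu + h_\lambda)$ conjugacy classes in total.

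Next I would compute centralizers via Lemma~\ref{lem:fiber}. The key new input, relative to the imaginary case, is Dirichlet's unit theorem: $\mathcal{O}_m^\times \cong \mathbb{Z} \times \mu_2$, while $\mathcal{O}_m[\zeta_3]^\times \cong \mathbb{Z} \times \mu_6$ since $\mathbb{Q}(\sqrt{m},\zeta_3)$ is a totally imaginary field of degree $4$ with $r_1 + r_2 - 1 = 1$. The centralizer is the fiber product of these two unit groups over $(\mathcal{O}_m/(3))^\times$ with twist $c$, sitting inside $\mathbb{Z}^2 \times \mu_6 \times \mu_2$. A direct analysis, parallel to the case distinction in the imaginary case and depending on the image of the reduction maps $\mathcal{O}_m^\times \to (\mathcal{O}_m/(3))^\times$ and $\mathcal{O}_m[\zeta_3]^\times \to (\mathcal{O}_m/(3))^\times$, shows that the fiber product has two free rank-one summands, a $\mu_3$ factor lying in the kernel of $\mu_6 \twoheadrightarrow (\mathcal{O}_m/(3))^\times$, and some residual $2$-torsion.

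For the normalizer, Lemma~\ref{lem:semidirect} gives a semidirect product by the Galois stabilizer of the ideal class $\mathfrak{a}$. The nontrivial generator of $\op{Gal}(\mathbb{Q}(\sqrt{m},\zeta_3)/\mathbb{Q}(\sqrt{m}))$ acts on $\mathcal{O}_m[\zeta_3]^\times$ by $\zeta_3 \mapsto \zeta_3^{-1}$, hence by inversion on $\mu_3$; it acts trivially on the free rank-one summand of each unit group, because $\mathcal{O}_m^\times \hookrightarrow \mathcal{O}_m[\zeta_3]^\times$ is finite-index and Galois fixes $\mathcal{O}_m^\times$ pointwise. Consequently, for Galois-invariant ideal classes the normalizer is cohomologically $\mathbb{Z}^2 \times (\mu_3 \rtimes \mathbb{Z}/2\mathbb{Z})$, while for classes in two-element Galois orbits the stabilizer is trivial and the normalizer coincides with the centralizer $\mathbb{Z}^2 \times \mu_3$.

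Plugging into Brown's formula yields the claimed decomposition: the $c \cdot h_\mu$ subgroups with Galois-invariant ideal class contribute copies of $\widehat{\op{H}}^\bullet(\mathbb{Z}^2 \times (\mu_3 \rtimes \mathbb{Z}/2\mathbb{Z}); \mathbb{F}_3)$, and the $c \cdot h_\lambda$ remaining subgroups contribute copies of $\widehat{\op{H}}^\bullet(\mathbb{Z}^2 \times \mu_3; \mathbb{F}_3)$. The main obstacle I expect is the bookkeeping of the fiber-product structure of the centralizer, specifically verifying that the various $\mu_2$-factors and $2$-power indices that depend on $m$ are all invisible to mod $3$ Farrell--Tate cohomology; this is what justifies replacing the honest centralizer and normalizer by the simple abelian models $\mathbb{Z}^2 \times \mu_3$ and $\mathbb{Z}^2 \times (\mu_3 \rtimes \mathbb{Z}/2\mathbb{Z})$ appearing in the statement.
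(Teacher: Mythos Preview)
Your proposal is correct and follows essentially the same route as the paper: the paper's argument for Proposition~\ref{prop:real} is the discussion immediately preceding it, which invokes Proposition~\ref{prop:halfreiner} and Corollary~\ref{cor:conjiso} for the conjugacy classification, Dirichlet's unit theorem to identify $\mathcal{O}_m^\times\cong\mathbb{Z}\times\mu_2$ and $\mathcal{O}_m[\zeta_3]^\times\cong\mathbb{Z}\times\mu_6$, the finite-index inclusion $\mathcal{O}_m^\times\hookrightarrow\mathcal{O}_m[\zeta_3]^\times$ to deduce triviality of the Galois action on the free part, and then Brown's formula. Your explicit remark that the residual $2$-torsion and $2$-power indices are invisible to mod~$3$ Farrell--Tate cohomology is exactly what the paper abbreviates by the phrase ``up to $2$-torsion''.
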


The Farrell--Tate cohomology algebra is $\mathbb{F}_3[a_2](b_1,x_1,y_1)$ for the case $\mathbb{Z}^2\times\mu_3$ and  $\mathbb{F}_3[a_2^{\pm 2}](b_1^3,x_1,y_1)$ for the case $\mathbb{Z}^2\times(\mu_3\rtimes\mathbb{Z}/2\mathbb{Z})$. 

\subsection{$5$-torsion in $\op{PGL}_3(\mathcal{O}_{\mathbb{Q}(\sqrt{5})})$}

We consider the Farrell--Tate cohomology of the group $\op{GL}_3(\mathcal{O}_{\mathbb{Q}(\sqrt{5})})$. We fix the embedding $\mathbb{Q}(\zeta_5)\hookrightarrow\mathbb{C}$ given by $\zeta_5\mapsto \exp(\frac{2\pi{\op{i}}}{5})$. Since $[\mathbb{Q}(\zeta_5):\mathbb{Q}]=4$, $5$-torsion in $\op{GL}_3(\mathcal{O}_{\mathbb{Q}(\sqrt{m})})$ can only appear for the quadratic subfield $K=\mathbb{Q}(\sqrt{5})$. The minimal polynomial of $\zeta_5$ factors as
\[
\Phi_4(X)=\left(X^2-\frac{\sqrt{5}-1}{2}X+1\right)\left(X^2+\frac{\sqrt{5}+1}{2}X+1\right)
\]
Since $\zeta_5+\zeta_5^{-1}\in\mathcal{O}_{\mathbb{Q}(\sqrt{5})}$, $\zeta_5^2=(\zeta_5+\zeta_5^{-1})\zeta_5-1$ is an $\mathcal{O}_{\mathbb{Q}(\sqrt{5})}$-linear combination of $1$ and $\zeta_5$, hence $\{1,\zeta_5\}$ is a relative integral basis. We can employ Proposition~\ref{prop:halfreiner} to obtain the conjugacy classification of $\op{C}_5$-subgroups in $\op{GL}_3(\mathcal{O}_{\mathbb{Q}(\sqrt{5})})$ and to describe the Farrell--Tate cohomology. 

The class group of $\mathcal{O}_{\mathbb{Q}(\zeta_5)}$ is trivial, and for an $\mathcal{O}_{\mathbb{Q}(\sqrt{5})}[\op{C}_5]$-module $M$ whose $\mathcal{O}_{\mathbb{Q}(\sqrt{5})}$-rank is 3 the rank of $M/M_{\op{N}}$ has to be 1. Consequently, the number of conjugacy classes equals the number of $\mathcal{O}_{\mathbb{Q}(\zeta_5)}^\times$-orbits of elements in $\mathcal{O}_{\mathbb{Q}(\zeta_5)}/(\zeta_5-1)\cong\mathbb{F}_5$. To determine the image of $\mathcal{O}_{\mathbb{Q}(\zeta_5)}^\times\to \mathbb{F}_5^\times$ we consider the composition with the natural inclusion $\mathcal{O}_{\mathbb{Q}(\sqrt{5})}^\times\hookrightarrow \mathcal{O}_{\mathbb{Q}(\zeta_5)}^\times$. Now we note that the fundamental unit of $\mathbb{Q}(\sqrt{5})$ is given by $\frac{\sqrt{5}+1}{2}$, whose image under the reduction map 
\[
\mathcal{O}_{\mathbb{Q}(\sqrt{5})}\to \mathcal{O}_{\mathbb{Q}(\sqrt{5})}/(\sqrt{5})\cong \mathbb{F}_5
\]
is $3\in\mathbb{F}_5^\times$. Consequently, the map $\mathcal{O}_{\mathbb{Q}(\zeta_5)}^\times\to \mathbb{F}_5^\times$ is surjective and there are 2 orbits. 

The structure of the normalizers and the structure of the Farrell--Tate cohomology can be determined as in the previous case of homological 3-torsion over real quadratic number rings. Consequently, the mod 5 Farrell--Tate cohomology of $\op{GL}_3(\mathcal{O}_5)$ is of the form 
\[
\widehat{\op{H}}^\bullet\left(\op{PGL}_3(\mathcal{O}_{5});\mathbb{F}_5\right)
\cong 
\widehat{\op{H}}^\bullet\left(\mathbb{Z}^2\times\left(\mu_5\rtimes
\mathbb{Z}/2\mathbb{Z}\right);\mathbb{F}_5\right)^{\oplus 2}
\]

\subsection{$7$-torsion in $\op{PGL}_3(\mathcal{O}_{\mathbb{Q}(\sqrt{-7})})$}
 
 We consider the Farrell--Tate cohomology of the  group $\op{GL}_3(\mathcal{O}_{\mathbb{Q}(\sqrt{-7})})$ with $\mathbb{F}_7$-coefficients. 
 We fix the embedding $\mathbb{Q}(\zeta_7) \hookrightarrow \mathbb{C}$, 
 $\zeta_7 \mapsto \exp(\frac{2\pi{\op{i}}}{7})$.
 For the minimal polynomial of $\zeta_7$ over $\mathbb{Q}(\sqrt{-7})$, we choose the first of the two factors from Example \ref{ex:2factors}.
 From that polynomial, we see that $\zeta_7^3$ is a $\mathcal{O}_{\mathbb{Q}(\sqrt{-7})}$-linear combination of $1,\zeta_7$ and $\zeta_7^2$. This implies that $1,\zeta_7,\zeta_7^2$ is a relative integral basis of $\mathbb{Z}[\zeta_7]$ over $\mathcal{O}_{\mathbb{Q}(\sqrt{-7})}$. However, in this case, the ring $\mathcal{O}_{\mathbb{Q}(\sqrt{-7})}[T]/(\Phi_7(T))$ is not a Dedekind domain because the cyclotomic polynomial $\Phi_7(T)$ decomposes as product of two polynomials of degree $3$ over $\mathbb{Q}(\sqrt{-7})$. We have to apply the partial result Proposition~\ref{prop:halfreiner} to get a classification. This is enough for our purposes as we are interested in those modules which are $\mathcal{O}_{\mathbb{Q}(\sqrt{-7})}$-free of rank $3$.
 
 The class group $\op{Cl}(\mathcal{O}_{\mathbb{Q}(\sqrt{-7})})$ is trivial, every ideal class of $\mathbb{Q}(\zeta_7)$ has a basis as $\mathcal{O}_{\mathbb{Q}(\sqrt{-7})}$-module. We can therefore apply Reiner's results to determine the conjugacy classes of $\op{C}_7$-subgroups in $\op{PGL}_3(\mathcal{O}_{\mathbb{Q}(\sqrt{-7})})$. Since the class group of $\op{Cl}(\mathcal{O}_{\mathbb{Q}(\zeta_7)})$ is also trivial, there is a unique conjugacy class of cyclic subgroups of order $7$, corresponding to the free rank one $\mathbb{Z}[\zeta_7]$-module. 
 
 The centralizer in $\op{PGL}_3(\mathcal{O}_{\mathbb{Q}(\sqrt{-7})})$ of the cyclic group of order $7$ is the usual unit group
 \[
 \mathcal{O}_{\mathbb{Q}(\zeta_7)}^\times\cong\mathbb{Z}^2\times \mu_{14} 
 \]
 and the normalizer is an extension of this unit group by the Galois group of the extension $\mathbb{Q}(\zeta_7)/\mathbb{Q}(\sqrt{-7})$ acting in the obvious way. Therefore, the normalizer of the cyclic group is of the form
 \[
 \mathcal{O}_{\mathbb{Q}(\zeta_7)}^\times\rtimes \op{Gal}(\mathbb{Q}(\zeta_7)/\mathbb{Q}(\sqrt{-7}))\cong 
  \left(\mathbb{Z}^2\times \mu_{14}\right)\rtimes\mathbb{Z}/3\mathbb{Z}. 
 \]
 The Galois action on the group $\mu_{14}$ is induced from the natural embedding $\mathbb{Z}/3\mathbb{Z}\hookrightarrow \op{Aut}(\mu_{14})$, and the action on $\mathbb{Z}^2$ is given by the $2$-dimensional rotation representation. 
 
 \begin{proposition}
 \[
 \widehat{\op{H}}^\bullet(\op{PGL}_3(\mathcal{O}_{\mathbb{Q}(\sqrt{-7})});
 \mathbb{F}_7)\cong
 \widehat{\op{H}}^\bullet(\mathcal{O}_{\mathbb{Q}(\zeta_7)}^\times;\mathbb{F}_7)^{\mathbb{Z}/3\mathbb{Z}}
 \cong\mathbb{F}_7[a_2^{\pm 3}](b_1^5,x_1\wedge y_1).
 \]
 \end{proposition}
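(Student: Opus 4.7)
The plan is to apply Brown's formula directly and then compute the resulting invariants explicitly. The preceding analysis has produced a unique conjugacy class of order-$7$ subgroups in $\op{PGL}_3(\mathcal{O}_{\mathbb{Q}(\sqrt{-7})})$, with centralizer $C = \mathcal{O}_{\mathbb{Q}(\zeta_7)}^\times$ and normalizer $N = C \rtimes \op{Gal}(\mathbb{Q}(\zeta_7)/\mathbb{Q}(\sqrt{-7}))$, so that $N/C \cong \mathbb{Z}/3\mathbb{Z}$. Since $\op{PGL}_3(\mathcal{O}_{\mathbb{Q}(\sqrt{-7})})$ has $7$-rank $1$, Brown's formula (cf.~Section~\ref{sec:prelims}) immediately yields the first isomorphism
\[
\widehat{\op{H}}^\bullet(\op{PGL}_3(\mathcal{O}_{\mathbb{Q}(\sqrt{-7})}); \mathbb{F}_7) \cong \widehat{\op{H}}^\bullet(\mathcal{O}_{\mathbb{Q}(\zeta_7)}^\times; \mathbb{F}_7)^{\mathbb{Z}/3\mathbb{Z}}.
\]

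For the explicit algebra description, I would first decompose $\mathcal{O}_{\mathbb{Q}(\zeta_7)}^\times \cong \mathbb{Z}^2 \times \mu_{14}$. The $\mu_2$ part has order prime to $7$ and contributes nothing to mod-$7$ Farrell--Tate cohomology, while Proposition~\ref{prop:ftformula} (or a direct Künneth argument) gives
\[
\widehat{\op{H}}^\bullet(\mathcal{O}_{\mathbb{Q}(\zeta_7)}^\times; \mathbb{F}_7) \cong \mathbb{F}_7[a_2^{\pm 1}] \otimes \Lambda_{\mathbb{F}_7}(b_1, x_1, y_1),
\]
with $b_1$ and its Bockstein $a_2$ coming from the $\mu_7$ summand and $x_1, y_1$ coming from the free rank-$2$ factor. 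The generator $\sigma$ of $\op{Gal}(\mathbb{Q}(\zeta_7)/\mathbb{Q}(\sqrt{-7}))$ acts on $\mu_7$ as multiplication by a primitive cube root of unity $\omega \in \mathbb{F}_7^\times$ (via the natural inclusion $\mathbb{Z}/3\mathbb{Z} \hookrightarrow \op{Aut}(\mu_7) = \mathbb{F}_7^\times$, already used just above the statement). On the rank-$2$ free part, Dirichlet's logarithmic embedding identifies the Galois action with the sum-zero subrepresentation of the cyclic permutation of the three complex embeddings of $\mathbb{Q}(\zeta_7)$; this is the irreducible two-dimensional rotation representation of $\mathbb{Z}/3\mathbb{Z}$, which diagonalises over $\mathbb{F}_7$ into $\omega$- and $\omega^{-1}$-eigenlines.

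The proof concludes by listing $\sigma$-invariant monomials in $\mathbb{F}_7[a_2^{\pm 1}] \otimes \Lambda(b_1, x_1, y_1)$: each basis element $a_2^k b_1^\varepsilon x_1^\delta y_1^\eta$ has an easily read-off weight, and $\sigma$-invariance pins this weight to zero modulo $3$. Since $a_2^3$ has weight zero, the invariants form a module over $\mathbb{F}_7[a_2^{\pm 3}]$, and one extracts a minimal set of multiplicative generators to obtain the stated compact presentation $\mathbb{F}_7[a_2^{\pm 3}](b_1^5, x_1\wedge y_1)$. The main obstacle is precisely this final bookkeeping step: correctly translating the arithmetic eigenvalue data (Galois on units) into cohomological weights on $a_2, b_1, x_1, y_1$, and then repackaging the resulting list of weight-zero classes into the announced exterior-over-polynomial description; the computation is elementary, but requires care with sign conventions and with the identification between cohomological generators and Bockstein/rotation eigenvectors.
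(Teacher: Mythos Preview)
Your proposal is correct and follows essentially the same approach as the paper. The only cosmetic differences are: (i) you invoke Brown's formula directly for the first isomorphism, whereas the paper phrases the same reduction as degeneration of the Hochschild--Serre spectral sequence for $C\to N\to N/C$ (coprimality of $|N/C|=3$ to $7$); and (ii) for the invariants on the exterior factor you diagonalise the rotation representation over $\mathbb{F}_7$ into $\omega,\omega^{-1}$ eigenlines and do a weight count, while the paper keeps the permutation description $x_1\mapsto y_1$, $y_1\mapsto -x_1-y_1$ and observes that $\bigwedge^2\mathbb{Z}^2$ is trivial---both computations land on the same generator $x_1\wedge y_1$.
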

 
 \begin{proof}
 The first isomorphism follows since the Hochschild--Serre spectral sequence degenerates, essentially because the $\mathbb{F}_7$-cohomology of $\mathbb{Z}/3\mathbb{Z}$ is trivial. Now we determine the action and invariant subalgebra of 
 \begin{eqnarray*}
 \widehat{\op{H}}^\bullet(\mathcal{O}_{\mathbb{Q}(\zeta_7)}^\times;\mathbb{F}_7) & 
 \cong & \widehat{\op{H}}^\bullet(\mathbb{Z}^2\times\mathbb{Z}/14\mathbb{Z};
 \mathbb{F}_7) \\
 & \cong & \mathbb{F}_7[a_2](b_1,x_1,y_1)
 \end{eqnarray*}
 The action on the degree $2$ of $\op{H}^\bullet(\mathbb{Z}/7\mathbb{Z};\mathbb{Z})\cong \mathbb{F}_7[a_2]$ is via the dual of the natural embedding $\mathbb{Z}/3\mathbb{Z}\hookrightarrow \op{Aut}(\mu_{14})$. The induced action in degree $2n$ of $\mathbb{F}_7[a_2^n]$ is the $n$-th tensor power. Therefore, the invariant subring is $\mathbb{F}_7[a_2^3]$. In the cohomology ring $\op{H}^\bullet(\mathbb{Z}/7\mathbb{Z};\mathbb{F}_7)\cong \mathbb{F}_7[a_2](b_1)$, the representation in degree $2n-1$ is the same as in degree $2n$, hence the invariant subring here is $\mathbb{F}_7[a_2^3](b_1^5)$. 
 
 The action of $\mathbb{Z}/3\mathbb{Z}$ on $\mathbb{Z}^2$ is the rotation representation. Therefore, on the cohomology $\op{H}^\bullet(\mathbb{Z}^2;\mathbb{F}_7)\cong \mathbb{F}_7(x_1,y_1)$ we have the dual of the rotation representation in degree 1. The rotation is a permutation representation by $x_1\mapsto y_1, y_1\mapsto -x_1-y_1$. From this, we see that $\bigwedge^2\mathbb{Z}^2$ is the trivial $\mathbb{Z}/3\mathbb{Z}$-representation. Hence the invariant subalgebra is the exterior algebra generated by $x_1\wedge y_1$. The result now follows. 
 \end{proof}
 
 The Hilbert--Poincar{\'e} series for the invariant algebra is 
 \[
 \frac{(1+T^5)(1+T^2)}{1-T^6}.
 \]

\subsection{$\ell$-torsion in $\op{PGL}_\ell(\mathbb{Z})$}

Another example that can be handled along the lines of the $\op{PGL}_3(\mathcal{O}_{\mathbb{Q}(\sqrt{-m})})$-samples is the $\ell$-torsion in the case $\op{PGL}_\ell(\mathbb{Z})$. By Reiner's result \cite{reiner:1955}, there are $2h_{\mathbb{Q}(\zeta_\ell)}$ conjugacy classes of $\op{C}_\ell$-subgroups in $\op{PGL}_\ell(\mathbb{Z})$. By Dirichlet's unit theorem, $\mathcal{O}_{\mathbb{Q}(\zeta_\ell)}^\times\cong \mathbb{Z}^{\frac{\ell-3}{2}}\times\mu_{2\ell}$. Therefore, the normalizers for the two conjugacy classes belonging to a given ideal class are of the form $(\mathbb{Z}^{\frac{\ell-3}{2}}\times\mu_{2\ell})\rtimes\mathbb{Z}/(\ell-1)\mathbb{Z}$ and $(\mathbb{Z}^{\frac{\ell-3}{2}}\times_{\mathbb{Z}/(\ell-1)\mathbb{Z}}\mu_{2\ell})\rtimes\mathbb{Z}/(\ell-1)\mathbb{Z}$, respectively. For the special case $\ell=5$, the Farrell--Tate cohomology of $\op{PGL}_5(\mathbb{Z})$ is then twice that for $\op{PSL}_4(\mathbb{Z})$ (where the latter was computed in \cite{psl4z}). 

\section{Machine computations}
\label{sec:machine_computations}
We ran a machine computation, starting with Vorono\"i cell complexes with an action of $\op{GL}_3(\mathcal{O}_{\mathbb{Q}(\sqrt{-m})})$,
constructed by Sch\"onnenbeck's software~\cite{Sebastian}.
More precisely,  $\op{GL}_3(\mathcal{O}_{\mathbb{Q}(\sqrt{-m})})$ acts on a certain space of quadratic forms,
and there is an equivariant retraction to Ash's well-rounded retract \cite{Ash}.
On the latter co-compact space, a suitable form of Vorono\"i's algorithm yields an explicit cell structure with cell stabilizers and computable quotient space,
as described by Braun, Coulangeon, Nebe and Sch\"onnenbeck \cite{BCNS}.

To determine the parameters $\lambda$ and $\mu$ of Theorem \ref{thm:3-torsion}, 
we want to extract the torsion subcomplexes from these Vorono\"i cell complexes.
\begin{definition}
For $\ell$ a prime number, the $\ell$-torsion subcomplex is the set of all cells with stabilizers containing some element(s) of order $\ell$.
\end{definition}
For the $\ell$-torsion subcomplex to be guaranteed to be a cell complex, 
and to consist only of fixed points of order-$\ell$-elements (so to coincide with the $\ell$-singular part),
we need a rigidity property: 
We want each cell stabilizer to fix its cell pointwise.
This rigidity property is lacking on our Vorono\"i cell complexes.
In theory, it is always possible to obtain this rigidity property via the barycentric subdivision. 
 However, the barycentric subdivision of an $n$-dimensional cell complex can multiply the number of cells by $(n+1)!$ and thus easily let the memory stack overflow. 
In previous work of the authors~\cite{psl4z}, a cell subdivision algorithm was introduced (``rigid facets subdivision''), 
which refines the cell structure to get the rigidity property, in an efficient enough way to treat the $\op{GL}_3(\mathcal{O}_{\mathbb{Q}(\sqrt{-m})})$-cases.
This allows us to extract the $\ell$-torsion subcomplexes.

We applied the algorithm to 
\begin{itemize}
 \item  the  $\op{PGL}_3(\mathbb{Z}[i])$ cell complex of Dutour Sikiric~\cite{dutour:ellis:schuermann} and 
 \item the $\op{GL}_3(\mathcal{O}_{-m})$ cell complexes for  $m\in \{1,2,7,11,15,19,5
 \}$ of Sch\"onnen\-beck~\cite{Sebastian};
\end{itemize}
then extracted the $3$-torsion subcomplex, and finally reduced it using their pertinent methods~\cite{accessingFarrell}. The software and cell complex data for these computations is available publicly~\cite{github}.

\begin{outcome}
For  $\op{GL}_3(\mathcal{O}_{-m})$,  $m\in \{1,2,7,11,15,19,5\}$, the $3$-torsion subcomplex can be reduced to a graph.
The quotient of the reduced subcomplex by the $\op{GL}_3(\mathcal{O}_{-m})$-action consists of 
$\lambda$ connected components of type~$\circlegraph$ (an edge with its endpoints identified), 
and $\mu$ connected components of type~$\edgegraph$ (an edge without identifications on its endpoints).
The counts of $\lambda$ and $\mu$ are given in Table~\ref{results-table}. 
\end{outcome}
We were not able to extend Table~\ref{results-table} with the resources available to us, since the cases treated there are the ones of lowest complexity among imaginary quadratic fields, and already they took 2 to 6 months of processing at more than half of the 64 GB, resp. 100 GB, rapid access memory of the two workstations of Gabor Wiese's research group (around 3GHz, resp. 2GHz, on Linux Ubuntu, the second workstation with \textsc{Magma}~\cite{magma} for Sch\"onnenbeck's construction of the cell complex, both workstations with \textsc{Gap}~\cite{GAP4} for the rigid facets subdivision algorithm). We ran the $m=6$ case for 12 months, but it did not complete.
\begin{corollary}
Table~\ref{results-table} specifies, within its scope, the values of the parameters $\lambda$ and $\mu$ in Theorem~\ref{thm:3-torsion}.
\end{corollary}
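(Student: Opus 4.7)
The plan is to match the graph-theoretic output of the Outcome to the group-theoretic parameters of Theorem~\ref{thm:3-torsion} via the reduced torsion subcomplex formalism. First, I would invoke the standard dictionary from \cite{accessingFarrell}: after the rigid facets subdivision, each connected component of the quotient of the reduced $3$-torsion subcomplex corresponds bijectively to a $\op{PGL}_3(\ringO_{-m})$-conjugacy class of order-$3$ cyclic subgroups, with the stabilizer of the component being (commensurable to) the normalizer of a representative. Consequently, the total count $\lambda+\mu$ coming from the Outcome agrees with the total number of such conjugacy classes, and it only remains to match the two graph types with the two kinds of conjugacy classes appearing in Theorem~\ref{thm:3-torsion}.

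Next, for a fixed $\op{C}_3$-subgroup $C\leq\Gamma:=\op{PGL}_3(\ringO_{-m})$, I would determine the topological type of its component. By Lemma~\ref{lem:fiber} and Lemma~\ref{lem:semidirect}, the normalizer $\op{N}_\Gamma(C)$ either equals the centralizer $\op{C}_\Gamma(C)$ (which is precisely the case where $C$ has no dihedral overgroup, so $\op{N}_\Gamma(C)/\op{C}_\Gamma(C)$ is trivial) or is a $\Z/2$-extension of it coming from the Galois action of $\op{Gal}(\mathbb{Q}(\sqrt{-m},\zeta_3)/\mathbb{Q}(\sqrt{-m}))$ (the dihedral overgroup case, in which $C$ together with any lift of the Galois element generates an $S_3$). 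Both groups have virtual cohomological dimension $1$: modulo finite groups, $\op{C}_\Gamma(C)$ is $\Z$ (the free part of the unit group of $\mathcal{O}_{-m}[\zeta_3]$ modulo that of $\mathcal{O}_{-m}$) and is homotopically modelled on $S^1$, while the $\Z/2$-extension, with the reflection acting by inversion, is modelled on an interval obtained by folding the circle.

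Combining these observations, the component of $C$ in the quotient of the reduced torsion subcomplex is a circle $\circlegraph$ exactly when $C$ has no dihedral overgroup, and an edge $\edgegraph$ exactly when $C$ does have one. This yields the bijections $\lambda\leftrightarrow\circlegraph$ and $\mu\leftrightarrow\edgegraph$ in the sense of Theorem~\ref{thm:3-torsion}, so the entries of Table~\ref{results-table} are indeed the corresponding $\lambda$ and $\mu$ for the tabulated values of $m$. The main obstacle will be verifying that the rigid facets subdivision applied to the Vorono\"i cell complexes preserves the homotopy type of each component of the $3$-torsion subcomplex and exhibits precisely these two graph types in the quotient; this rests on the constructions of \cite{psl4z} together with a check on the stabilizer data output by Sch\"onnenbeck's software that no additional identifications are introduced.
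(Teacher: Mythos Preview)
Your approach differs from the paper's in a significant way. The paper's proof proceeds by directly reading off the \emph{stabilizer data} that the machine computation produces on the reduced $3$-torsion subcomplex: on each $\circlegraph$ component the vertex and edge stabilizers are all $\op{C}_3$, while on each $\edgegraph$ component the edge stabilizer is $\op{C}_3$ and the two vertex stabilizers are $\op{D}_3$. From this, together with the bijection between components and conjugacy classes of order-$3$ subgroups from \cite{accessingFarrell,psl4z}, the dihedral-overgroup dichotomy is immediate: a $\op{D}_3$ vertex stabilizer witnesses a dihedral overgroup, and conversely any dihedral overgroup would fix a cell of the rigidified complex and hence appear as a stabilizer on the component. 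No appeal to the normalizer computations of Section~\ref{sec:centralizer} is needed.

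Your route instead invokes Lemmas~\ref{lem:fiber} and~\ref{lem:semidirect} to compute $\op{N}_\Gamma(C)$ abstractly and then argues homotopically that the quotient component must be $\circlegraph$ or $\edgegraph$ according to whether $\op{N}/\op{C}$ is trivial or $\mathbb{Z}/2$. This is a legitimate alternative, but two points deserve attention. First, the step ``$\op{C}_\Gamma(C)$ is homotopically modelled on $S^1$, the $\mathbb{Z}/2$-extension on an interval'' is only a heuristic as written; making it rigorous requires identifying the component of the (unreduced) torsion subcomplex as a tree on which $\op{N}_\Gamma(C)$ acts with quotient the reduced component, which is exactly the content of the torsion-subcomplex reduction in \cite{accessingFarrell} and should be cited as such rather than asserted. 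Second, Lemmas~\ref{lem:fiber} and~\ref{lem:semidirect} sit under the PID hypothesis of Proposition~\ref{prop:halfreiner}, so your argument as stated does not cover the entries $m=5,15$ in Table~\ref{results-table}; the paper's stabilizer-reading argument applies uniformly. Finally, your ``main obstacle'' paragraph is misplaced: the Outcome already records the observed graph types, so there is nothing further to verify about the subdivision---the Corollary's task is only to interpret that output group-theoretically.
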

\begin{proof}
On each connected component of type~$\circlegraph$, the vertex stabilizer equals the edge stabilizer, 
and is of isomorphism type~$\op{C}_3$, denoting a cyclic group with $3$ elements.
On each connected component of type~$\edgegraph$, the edge stabilizer is of isomorphism type~$\op{C}_3$, 
and the two vertex stabilizers are of isomorphism type~$\op{D}_3$, denoting a dihedral group with $3\cdot 2$ elements,
but not conjugate to each other in $\op{GL}_3(\mathcal{O}_{-m})$.
From previous work of the authors~\cites{accessingFarrell,psl4z}, 
we know that since each connected component has been reduced to one edge, 
each conjugacy class of order-$3$-subgroups in $\op{GL}_3(\mathcal{O}_{-m})$
is represented precisely by one of these edges' stabilizers.
So we have $\lambda$, respectively $\mu$, conjugacy classes of order-$3$-subgroups which are not contained, respectively contained, in a copy of $\op{D}_3$ in $\op{GL}_3(\mathcal{O}_{-m})$.
\end{proof}

The reduced $\ell$-torsion subcomplex furthermore allows us to compute the mod $\ell$ Farrell-Tate cohomology of $\op{GL}_3(\mathcal{O}_{-m})$
via the equivariant spectral sequence (a.k.a. the isotropy spectral sequence).
Because there is up to conjugating isomorphism just one inclusion $\op{C}_3 \to \op{D}_3$, 
the $d_1^{p,q}$-differentials of the equivariant spectral sequence with 
$\mathbb{F}_3$-coefficients on the $3$-torsion subcomplex have the maximal possible ranks, 
i.e., they are surjective whenever both domain of definition and codomain contain $3$-torsion. 
The subsequent computation of the 
$E_2=E_\infty$-page of the isotropy spectral sequence agrees with the results formulated in Theorem~\ref{thm:gl3_3-torsion}.

\appendix
\section{Explicit order 3 matrices over \texorpdfstring{$\mathbb{Z}[\sqrt{-5}]$}{O-5}}
\label{sec:o5}

We briefly discuss how arguments as in the proof of  Theorem~\ref{thm:fullreiner} can actually be used to produce explicit matrices of finite order. The sample case we discuss is $\op{GL}_3(\mathcal{O}_{\mathbb{Q}(\sqrt{-5})})$, i.e., invertible $3\times 3$-matrices of order 3 with entries in the number ring $\mathcal{O}_{\mathbb{Q}(\sqrt{-5})}=\mathbb{Z}[\sqrt{-5}]$, which we denote by $\mathcal{O}_{-5}$ in the following. Note that our proof of Theorem~\ref{thm:fullreiner} does not apply to that case. Nevertheless, the first part of the proof allows us to provide matrices of finite order, but the conjugacy discussion in the end of the proof requires choices of suitable bases which does not work for non-free projective modules. In any case, we hope that the explicit arguments below clarify a bit the ideas behind the proof of Theorem~\ref{thm:fullreiner} as well as the issues in the general non-PID case.

We first describe the $\mathcal{O}_{-5}$-structure of the relevant modules $M$ over $\mathcal{O}_{-5}[\op{C}_3]$. By the arguments in the proof of Theorem~\ref{thm:fullreiner}, the submodule $M_{\op{N}}\subseteq M$ annihilated by the norm element has a structure of module over $\mathcal{O}_{-5}[\zeta_3]$. This is a Dedekind ring, cf. Lemma~\ref{lem:basis}, and we have a relative integral basis $\{1,\zeta_3\}$ for $\mathcal{O}_{-5}[\zeta_3]$ over $\mathcal{O}_{-5}$. A slight change of basis allows us to write
\[
\mathcal{O}_{-5}[\zeta_3]\cong \mathcal{O}_{-5}\cdot\zeta_3\oplus \mathcal{O}_{-5}\cdot(\zeta_3-1)
\]
as $\mathcal{O}_{-5}$-modules. The ideal class group of $\mathcal{O}_{-5}[\zeta_3]$ is isomorphic to $\mathbb{Z}/2\mathbb{Z}$, with a representative of the nontrivial ideal class given by $(3,\sqrt{-5}\zeta_3-1)$. Under the above isomorphism $\mathcal{O}_{-5}[\zeta_3]\cong \mathcal{O}_{-5}\cdot\zeta_3\oplus \mathcal{O}_{-5}\cdot(\zeta_3-1)$, the ideal $(3,\sqrt{-5}\zeta_3-1)$ maps to $(3,\sqrt{-5}-1)\cdot\zeta_3$ in the first factor (given by setting $\zeta_3-1=0$) and to $(3,-1)$ in the second factor (given by setting $\zeta_3=0$). In particular, as $\mathcal{O}_{-5}$-module, we have a decomposition 
\[
(3,\sqrt{-5}\zeta_3-1)\cong (3,\sqrt{-5}-1)\cdot\zeta_3\oplus \mathcal{O}_{-5}\cdot (\zeta_3-1). 
\]
In particular, this implies that the relative norm map
\[
\mathbb{Z}/2\mathbb{Z}\cong \op{Cl}(\mathcal{O}_{-5}[\zeta_3])\to \op{Cl}(\mathcal{O}_{-5})\cong\mathbb{Z}/2\mathbb{Z}
\]
is the nontrivial map. The $\mathcal{O}_{-5}[\op{C}_3]$-modules $M$ which are free of rank 3 as $\mathcal{O}_{-5}$-modules have a direct sum decomposition $M\cong M_{\op{N}}\oplus M/M_{\op{N}}$ (as $\mathcal{O}_{-5}$-modules), and there are two possibilities: the first has $M_{\op{N}}\cong \mathcal{O}_{-5}[\zeta_3]\cong \mathcal{O}_{-5}^{\oplus 2}$ and $M/M_{\op{N}}\cong \mathcal{O}_{-5}$, and the second has $M_{\op{N}}\cong (3,\sqrt{-5}\zeta_3-1)\cong \mathcal{O}_{-5}\oplus(3,\sqrt{-5}-1)$ and $M/M_{\op{N}}\cong (2,\sqrt{-5}+1)$ (where we have chosen a different ideal representative $(2,\sqrt{-5}+1)$ of the non-trivial element of the ideal class group of $\mathcal{O}_{-5}$). 

The $\mathcal{O}_{-5}[\op{C}_3]$-module structures are now determined by giving the action of the element $[\gamma]\in \mathcal{O}_{-5}[\op{C}_3]$ corresponding to a chosen generator $\gamma$ of $\op{C}_3$. By the description of the action in Theorem~\ref{thm:fullreiner}, the element $[\gamma]$ acts on $M_{\op{N}}$ via multiplication by $\zeta_3$. On $M/M_{\op{N}}$, the action is given by adding the image under an $\mathcal{O}_{-5}$-linear map $M/M_{\op{N}}\to M_{\op{N}}$.

In the first case, we get the following module, as well as explicit representing matrices for the order 3 elements of $\op{GL}_3(\mathcal{O}_{-5})$. We have $M/M_{\op{N}}\cong\mathcal{O}_{-5}$ and $M_{\op{N}}\cong \mathcal{O}_{-5}[\zeta_3]\cong\mathcal{O}_{-5}^{\oplus 2}$. As basis of $M_{\op{N}}\oplus M/M_{\op{N}}$ we choose $1\in M/M_{\op{N}}$ and $\{1,\zeta_3\}$ in $M_{\op{N}}$. The element $[\gamma]\in \mathcal{O}_{-5}[\op{C}_3]$ acts on $M_{\op{N}}$ by multiplication with $\zeta_3$, and on $M/M_{\op{N}}$ by $1\mapsto 1+\beta$ for $\beta\in M_{\op{N}}$. 
In this basis, the representing matrix for $[\gamma]$ is %the following
\footnotesize
\[
\left(\begin{array}{ccc} 0&-1&a\\1&-1&b\\0&0&1\end{array}\right).
%\left(\begin{array}{ccc} -2&-3&a\\1&1&b\\0&0&1\end{array}\right).
\]
\normalsize
The $2\times 2$-block in the upper left is the representing matrix for multiplication by $\zeta_3$ on $\mathcal{O}_{-5}[\zeta_3]$ with basis $\{1,\zeta_3\}$, and $a,b\in\mathcal{O}_{-5}$ determine the element $a+b\zeta_3\in  M_{\op{N}}\cong\mathcal{O}_{-5}[\zeta_3]$ which is the image of $1\in M/M_{\op{N}}\cong\mathcal{O}_{-5}$ under the action of $[\gamma]$.

The second case is slightly more complicated. We have $M/M_{\op{N}}\cong (2,\sqrt{-5}+1)$, henceforth denoted by $\mathfrak{a}$, and $M_{\op{N}}\cong  (3,\sqrt{-5}-1)\oplus \mathcal{O}_{-5}$, the first summand henceforth denoted by $\mathfrak{b}$. Since the ideals $\mathfrak{a}$ and $\mathfrak{b}$ are non-principal, it is impossible to choose $\mathcal{O}_{-5}$-bases for $M_{\op{N}}$ or $M/M_{\op{N}}$ individually. However, we may choose a basis for $M_{\op{N}}\oplus M/M_{\op{N}}$ as follows: since the ideals $\mathfrak{a}$ and $\mathfrak{b}$ are relatively prime, we have the natural surjection
\[
f\colon \mathfrak{a}\oplus\mathfrak{b}\to \mathfrak{a}+\mathfrak{b}=\mathcal{O}_{-5}\colon (x,y)\mapsto x-y.
\]
The kernel of this map $f$ is $\mathfrak{a}\mathfrak{b}=(-\sqrt{-5}+1)\mathcal{O}_{-5}$. In particular, an $\mathcal{O}_{-5}$-basis for $\mathfrak{a}\oplus\mathfrak{b}$ is given by $(-2,-3)$ (obtained by lifting $1$ along $f$) and $(-\sqrt{-5}+1,-\sqrt{-5}+1)$ (obtained as image of the generator of $\ker f$ under the inclusion into $\mathfrak{a}\oplus\mathfrak{b}$). We can now write down a choice of basis for 
\[
M_{\op{N}}\oplus M/M_{\op{N}}\cong \mathfrak{b}\cdot\zeta_3\oplus \mathcal{O}_{-5}\cdot(\zeta_3-1)\oplus \mathfrak{a}
\]
given by $(-3\cdot\zeta_3,0,-2)$, $(0,1,0)$ and $((-\sqrt{-5}+1)\cdot\zeta_3,0,-\sqrt{-5}+1)$. Note that the decomposition $M_{\op{N}}\cong \mathfrak{b}\cdot\zeta_3\oplus\mathcal{O}_{-5}\cdot(\zeta_3-1)$ is the one induced from the decomposition $\mathcal{O}_{-5}[\zeta_3]\cong\mathcal{O}_{-5}\cdot\zeta_3\oplus \mathcal{O}_{-5}\cdot(\zeta_3-1)$ corresponding to the relative integral basis $(\zeta_3,\zeta_3-1)$. 

The action of $[\gamma]\in\mathcal{O}_{-5}[\zeta_3]$ is now given as follows: on $M_{\op{N}}$, $[\gamma]$ acts by multiplication  with $\zeta_3$; and on $M/M_{\op{N}}\cong\mathfrak{a}$, $[\gamma]$ acts  by sending an element $x\in \mathfrak{a}$ to $x+\lambda(x)$ where $\lambda\colon \mathfrak{a}\to M_{\op{N}}\cong \mathfrak{b}\cdot\zeta_3\oplus \mathcal{O}_{-5}\cdot(\zeta_3-1)$ is an $\mathcal{O}_{-5}$-linear map. To describe such a linear map, we note that we have an isomorphism
\[
\phi\colon\mathfrak{a}\to\mathfrak{b}\colon 2\mapsto -(\sqrt{-5}-1), \sqrt{-5}+1\mapsto 3
\]
The fact that this is a well-defined map (as well as the well-definedness of the inverse) follows from the well-known equality $2\cdot 3+(\sqrt{-5}+1)(\sqrt{-5}-1)=0$. In particular, the composition $\mathfrak{a}\xrightarrow{\lambda} M_{\op{N}}\xrightarrow{\op{pr}_1} \mathfrak{b}$ is an $\mathcal{O}_{-5}$-multiple of this isomorphism (because $\op{Hom}_{\mathcal{O}_{-5}}(\mathfrak{a},\mathfrak{b})\cong \mathfrak{a}^\vee\otimes\mathfrak{b}\cong \mathcal{O}_{-5}$). The second component of a linear map $\lambda\colon \mathfrak{a}\to M_{\op{N}}$ is a linear form $\mathfrak{a}\to \mathcal{O}_{-5}$ which can be described as follows: from before, we know $\mathfrak{a}\otimes\mathfrak{b}=\mathfrak{a}\mathfrak{b}=(-\sqrt{-5}+1)\mathcal{O}_{-5}$, hence a linear map $\mathfrak{a}\to\mathcal{O}_{-5}$ always has the form
\[
\psi\colon \mathfrak{a}\to\mathcal{O}_{-5}\colon x\mapsto \frac{x\cdot y}{(-\sqrt{-5}+1)}
\]
for $y\in \mathfrak{b}$. More precisely, for an element $3a+(\sqrt{-5}+1)b$ with $a,b\in\mathcal{O}_{-5}$, we have $\psi(2)=(-1-\sqrt{-5})a-2b$ and $\psi(\sqrt{-5}+1)=(2-\sqrt{-5})a+(1+\sqrt{-5})b$.

We are now ready to combine everything to obtain matrices representing multiplication with $[\gamma]$ in the respective $\mathcal{O}_{-5}[\op{C}_3]$-module structures. We use the basis $(-3\cdot\zeta_3,0,-2)$, $(0,1,0)$ and $((-\sqrt{-5}+1)\cdot\zeta_3,0,-\sqrt{-5}+1)$ given above. Multiplication by $[\gamma]$ is given by multiplication with $\zeta_3$ on the first two summands and by addition of $\lambda(x)$ on the third summand. This leads to the following
\[\left(
\begin{array}{ccc}
(1-\sqrt{-5})x-8 & 3 & (\sqrt{-5}+2)x-3\sqrt{-5}+3 \\
-3-(1+\sqrt{-5})a+2b & 1 & -\sqrt{-5}+1+3a+(\sqrt{-5}-1)b\\
2x-3\sqrt{-5}-3 & \sqrt{-5}+1 & (\sqrt{-5}-1)x+7
\end{array}
\right)\]
where $x,a,b\in\mathcal{O}_{-5}$ give rise to the linear map 
\[
\lambda\colon \mathfrak{a}\to\mathfrak{b}\oplus\mathcal{O}_{-5}\colon z\mapsto \left(x\cdot\phi(z),\frac{z(3a+(\sqrt{-5}+1)b)}{(-\sqrt{-5}+1)}\right).
\]

According to Theorem~\ref{thm:fullreiner} resp. Proposition~\ref{prop:halfreiner}, the matrices written above provide representatives for all conjugacy classes of $\op{C}_3$-subgroups in $\op{GL}_3(\mathcal{O}_{-5})$. 

It remains to discuss conjugacy relations among the infinitely many matrices written out above. The first case corresponding to the principal ideal class is easier: 
conjugation by $\op{e}_{13}(b)$ and $\op{e}_{23}(x)$ allows us to reduce to matrices of the form 
\footnotesize
\[
\left(\begin{array}{ccc} 0&-1&a\\1&-1&0\\0&0&1\end{array}\right)
%\left(\begin{array}{ccc} -2&-3&a\\1&1&0\\0&0&1\end{array}\right)
\]
\normalsize
where the conjugacy class only depends on the residue of $a\in\mathcal{O}_{-5}$ modulo $(3)$. Moreover, as in the proof of Theorem~\ref{thm:fullreiner}, scaling by global units from $\mathcal{O}_{-5}[\zeta_3]$ is allowed. In particular, we get 5 conjugacy classes, using the representatives $a\in \{0,1,\sqrt{-5},1+\sqrt{-5},1-\sqrt{-5}\}$ of the $\mathcal{O}_{-5}[\zeta_3]^\times$-orbits on $\mathcal{O}_{-5}/(3)$. We can also see that conjugation by 
\footnotesize
\[
\left(\begin{array}{ccc} 0&1&a\\1&0&0\\0&0&1\end{array}\right)
%\left(\begin{array}{ccc} -2&-3&a\\1&1&0\\0&0&1\end{array}\right)
\]
\normalsize
takes the square of a matrix as above (with (3,1)-entry $a$) to one with (3,1)-entry $2a$ which mod 3 is in the same global unit orbit as the original matrix. In particular, all the above matrices are conjugate to their squares, hence the corresponding cyclic groups embed in dihedral groups. 

The conjugacy relations between the matrices arising from the non-trivial ideal class are not so easy to identify. In particular, choosing suitable bases as before is significantly more difficult because we can only choose a basis in the full module $M_{\op{N}}\oplus M/M_{\op{N}}$ but not in the individual summands. A discussion as before for the trivial ideal class is therefore not possible. At this point, we do not know what the appropriate generalization of Theorem~\ref{thm:fullreiner} or Proposition~\ref{prop:halfreiner} should be, and how to properly identify the number of conjugacy classes of elements resp. subgroups. The computer calculations for the case $\mathcal{O}_{-5}$ suggest that there are six conjugacy classes of order 3 elements related to the non-trivial ideal class; and the Galois group of $\mathbb{Q}(\zeta_3,\sqrt{-5})/\mathbb{Q}(\sqrt{-5})$ fixes four of these classes (which hence acquire a dihedral overgroup) and interchanges the remaining two.

\subsubsection*{Acknowledgement} We would like to thank Oliver Br\"aunling for comments and discussions on various versions of the manuscript, the IRMA Strasbourg for hosting a long-term research stay of Bui Anh Tuan, and Gabor Wiese for providing us his computing nodes. We also would like to thank Eamonn O'Brien and an anonymous referee for their comments on the submitted version which led to several clarifications, and helped improve the presentation of the paper.

\begin{bibdiv}
 \begin{biblist}
\bib{Ash}{article}{
author={Ash, Avner},
title={Small-dimensional classifying spaces for arithmetic subgroups of general linear groups}, 
journal={Duke Math. J.},
volume={51},
date={1984},
pages={459--468},
}
\bib{bird:parry}{article}{
   author={Bird, R.H.},
   author={Parry, C.J.},
   title={Integral bases for bicyclic biquadratic fields over quadratic subfields},
   journal={Pacific J. Math.},
   volume={66},
   date={1976},
   number={1},
   pages={29--36}
}
\bib{magma}{article}{
    AUTHOR = {Bosma, Wieb},
    author = {Cannon, John},
    author ={Playoust, Catherine},
     TITLE = {The {M}agma algebra system. {I}. {T}he user language},
      NOTE = {Computational algebra and number theory (London, 1993)},
   JOURNAL = {J. Symbolic Comput.},
%   FJOURNAL = {Journal of Symbolic Computation},
    VOLUME = {24},
      YEAR = {1997},
    NUMBER = {3-4},
     PAGES = {235--265},
      ISSN = {0747-7171},
%    MRCLASS = {68Q40},
  review = {MR1484478},
%        DOI = {10.1006/jsco.1996.0125},
%        URL = {http://dx.doi.org/10.1006/jsco.1996.0125},
}
 \bib{BCNS}{article}{
    Author = {Braun, Oliver},
  author =  {{Coulangeon}, Renaud},
  author =  {Nebe, Gabriele},
  author =  {{Sch\"onnenbeck}, Sebastian},
    Title = {{Computing in arithmetic groups with Vorono\"{\i}'s algorithm}},
    Journal = {{J. Algebra}},
    Volume = {435},
    Pages = {263--285},
    Year = {2015},
   review={~Zbl 1323.16014}
}
\bib{Brown}{book}{
   author={Brown, Kenneth S.},
   title={Cohomology of groups},
   series={Graduate Texts in Mathematics},
   volume={87},
   note={Corrected reprint of the 1982 original},
   publisher={Springer-Verlag, New York},
   date={1994},
   pages={x+306},
   isbn={0-387-90688-6},
   review={\MR{1324339}},
}

\bib{Bui}{book}{
  author =  {Bui, Anh Tuan},
    author =  {Rahm, Alexander D.},
    title =   {Torsion Subcomplexes Subpackage, version 2.1},
  address = {accepted sub-package in HAP (Homological Algebra Programming)~\cite{HAP} in the computer algebra system GAP \cite{GAP4}},
  year =    {2018, Source code available at
  \url{http://gaati.org/~rahm/subpackage-documentation/} \qquad ${}$},
}

\bib{psl4z}{article}{
  author =  {Bui, Anh Tuan},
  author =  {Rahm, Alexander D.},
  author = {Wendt, Matthias},
  title =   {The Farrell--Tate and Bredon homology for $\op{PSL}_4(\mathbb{Z})$ via cell subdivisions},
  journal = {J. Pure Appl. Algebra},
  volume = {223},
  number = {7},
  pages = {2872--2888},
  note = {arXiv:1611.06099},
  year =    {2019}
}

\bib{github}{book}{
  author =  {Bui, Anh Tuan},
  author =  {Rahm, Alexander D.},
  author = {Wendt, Matthias},
  title =   {Source code and computational data used in the present paper},
  address={\\ \url{https://github.com/arahm/Farrell-Tate_cohomology_computations}},
  year =    {2022}
}

\bib{dutour:ellis:schuermann}{article}{
   author = { {Dutour~Sikiri\'c}, Mathieu },
  author =  {Ellis, Graham J.},
   author = { {Schuermann}, Achill },
   Title = {{On the integral homology of $\text{PSL}_4(\mathbb Z)$ and other arithmetic groups}},
    Journal = {{J. Number Theory}},
    Volume = {131},
    Number = {12},
    Pages = {2368--2375},
    Year = {2011},
   review={~Zbl 1255.11028}
}

\bib{DGGHSY}{article}{
   author = { {Dutour~Sikiri\'c}, Mathieu },
   author = {{Gangl}, Herbert},
   author = {{Gunnells}, Paul~E.},
   author = {{Hanke}, Jonathan },
   author = { {Schuermann}, Achill },
   author = { {Yasaki}, Dan},
   title = {On the cohomology of linear groups over imaginary quadratic fields},
   journal = {J. Pure Appl. Algebra},
   volume = {220},
   number = {7},
   pages = {2654--2589},
  note = {arXiv: 1307.1165},
     year = {2016}
}

\bib{HAP}{incollection}{
      author={Ellis, Graham},
       title={Homological algebra programming},
        date={2008},
   booktitle={Computational group theory and the theory of groups},
      series={Contemp. Math.},
      volume={470},
   publisher={Amer. Math. Soc.},
     address={Providence, RI},
       pages={63\ndash 74},
      review={\MR{2478414 (2009k:20001)}, \textit{implemented in the HAP package in the GAP computer algebra system \cite{GAP4}}},
}
\bib{GAP4}{book}{
  author={The GAP~Group}, 
  title={GAP -- Groups, Algorithms, and Programming, Version 4.9.3},
  year         = {2018},
  address   = {\\ \url{https://www.gap-system.org}},
    }
\bib{kubota}{article}{
   author={Kubota, T.},
   title={\"Uber den bizyklischen biquadratischen Zahlk\"orper},
   journal={Nagoya Math. J.},
   volume={10},
   date={1956},
   pages={65--85}
}

\bib{latimer:macduffee}{article}{
   author={Latimer, Claiborne G.},
   author={MacDuffee, C. C.},
   title={A correspondence between classes of ideals and classes of
   matrices},
   journal={Ann. of Math. (2)},
   volume={34},
   date={1933},
   number={2},
   pages={313--316},
   issn={0003-486X},
   review={\MR{1503108}},
   doi={10.2307/1968204},
}

\bib{neukirch}{book}{
 Author = {Neukirch, J{\"u}rgen},
 Title = {Algebraic number theory. {Transl}. from the {German} by {Norbert} {Schappacher}},
%  FSeries = {Grundlehren der Mathematischen Wissenschaften},
 Series = {Grundlehren Math. Wiss.},
 ISSN = {0072-7830},
 Volume = {322},
 ISBN = {3-540-65399-6},
 Year = {1999},
 Publisher = {Berlin: Springer},
%  Language = {English},
%  Keywords = {11Rxx,11Sxx,11-02,11-01},
 review={zbMATH 1313469, Zbl 0956.11021}
}
\bib{PariGP}{book}{
      author = {The PARI~Group},
      title        = {PARI/GP version \texttt{2.9.5}},
      year         = {2018},
      address      = {Univ. Bordeaux, available from \url{http://pari.math.u-bordeaux.fr/}}
}
\bib{accessingFarrell}{article}{
   author={Rahm, Alexander D.},
   title={Accessing the cohomology of discrete groups above their virtual cohomological dimension},
   journal={J. Algebra},
   volume={404},
   date={2014},
   pages={152--175},
   issn={0021-8693},
   review={\MR{3177890}},
}

\bib{sl2ff}{article}{
  author =  {Rahm, Alexander D.},
  author =  {Wendt, Matthias},
  title =   {On Farrell--Tate cohomology of $\op{SL}_2$ over $S$-integers},
  JOURNAL = {J. Algebra},
  VOLUME = {512},
  PAGES = {427--464},
  YEAR = {2018}
  }

\bib{reiner:1955}{article}{
   author={Reiner, Irving},
   title={Integral representations of cyclic groups of prime order},
   journal={Proc. Amer. Math. Soc.},
   volume={8},
   date={1957},
   pages={142--146},
   issn={0002-9939},
   review={\MR{0083493}},
}
\bib{Sage}{manual}{
      author={Developers, The~Sage},
       title={{S}agemath, the {S}age {M}athematics {S}oftware {S}ystem
  ({V}ersion 7.6)},
        date={2017},
        note={\\doi.org/10.5281/zenodo.820864 \qquad {\tt https://www.sagemath.org}},
}
\bib{Sebastian}{article}{
   author = {{Sch{\"o}nnenbeck}, Sebastian},
    title = {Resolutions for unit groups of orders},
journal={Journal of Homotopy and Related Structures},
year={2017},
volume={12},
number={4},
pages={837--852},
issn={1512-2891},
doi={10.1007/s40062-016-0167-6},
}

\end{biblist}
\end{bibdiv}

\end{document}